\documentclass[a4paper,11pt]{amsart}
\usepackage{amsmath,amsthm,amssymb,amscd,amsfonts}
\usepackage[matrix,arrow,curve]{xy}
\usepackage{graphicx}
\input{xypic}
\DeclareGraphicsExtensions{.jpg}

\usepackage{color}\definecolor{darkblue}{rgb}{0,0.1,.5}
\usepackage[colorlinks=true,linkcolor=darkblue, urlcolor=darkblue, citecolor=darkblue]{hyperref}
\usepackage{cleveref}


\textwidth=15cm
\hoffset=-1cm

\textheight=23cm
\voffset=-5mm



\newcommand{\hr}[2][]{\hyperref[#2]{#1~\ref{#2}}}

\newtheorem{theorem}{Theorem}[section]
\newtheorem{proposition}[theorem]{Proposition}
\newtheorem{corollary}[theorem]{Corollary}
\newtheorem{lemma}[theorem]{Lemma}

\newtheorem*{theorem*}{Theorem}
\theoremstyle{definition}
\newtheorem{definition}[theorem]{Definition}
\newtheorem{example}[theorem]{Example}
\newtheorem{construction}[theorem]{Construction}
\theoremstyle{remark}
\newtheorem*{remark}{Remark}
\newtheorem*{acknowledgements}{Acknowledgments}

\numberwithin{equation}{section}


\newcommand{\Hom}{\mathop{\mathrm{Hom}}\nolimits}
\def\id{\mathop{\mathrm{id}}}
\def\Im{\mathop{\rm Im}}
\newcommand{\Ker}{\mathop{\rm Ker}}

\def\C{\mathbb C}
\def\Q{\mathbb Q}

\def\Z{\mathbb Z}


\def\MU{\mathit{MU}}
\def\MSU{\mathit{MSU}}
\def\BU{\mathit{BU}}
\def\BSU{\mathit{BSU}}
\def\SU{\mathit{SU}}
\def\cf{c^{\scriptscriptstyle U}}

\def\du{d^{\scriptscriptstyle U}}
\def\bdu{\overline{d}\phantom{d}\!\!\!^{\scriptscriptstyle U}}

\def\ge{\geqslant}
\def\geq{\geqslant}
\def\le{\leqslant}





\def\pt{\mathit{pt}}

\begin{document}

\title[$SU$-linear operations in complex cobordism]{$SU$-linear operations in complex cobordism and the $c_1$-spherical bordism theory}

\author{Georgy Chernykh}
\address{Faculty of Mathematics and Mechanics, Moscow
State University, Russia;\newline\indent
Steklov Mathematical Institute of the Russian Academy of Sciences, Moscow, Russia}
\email{aaa057721@gmail.com}

\author{Taras Panov}
\address{Faculty of Mathematics and Mechanics, Moscow State University, Russia;
\newline\indent
HSE University, Moscow, Russia;\newline\indent
Institute for Information Transmission Problems, Russian Academy of Sciences, Moscow}
\email{tpanov@mech.math.msu.su}
\urladdr{http://higeom.math.msu.su/people/taras/}

\thanks{The study was funded within the framework of the HSE University Basic Research Program and by the Russian Foundation for Basic Research (grant no.~20-01-00675). G.\,Chernykh was also supported by Theoretical Physics and Mathematics Advancement Foundation ``BASIS''}

\subjclass[2020]{55N22, 57R77}
\keywords{complex bordism, SU-bordism, cohomological operations, formal group laws}

\begin{abstract}
We study the $SU$-linear operations in complex cobordism and prove that they are generated by the well-known geometric operations~$\partial_i$. For the theory $W$ of $c_1$-spherical bordism, we describe all $SU$-linear multiplications on $W$ and projections $\MU \to W$. We also analyse complex orientations on $W$ and the corresponding formal group laws $F_W$. The relationship between the formal group laws~$F_W$ and the coefficient ring $W_*$ of the $W$-theory was studied by Buchstaber in 1972. We extend his results by showing that for any $SU$-linear multiplication and orientation on $W$,  the coefficients of the corresponding formal group law $F_W$ do not generate the ring~$W_*$, unlike the situation with complex bordism.
\end{abstract}

\maketitle


\section*{Introduction}
\emph{Complex bordism}, or \emph{$U$-bordism}, is the bordism theory of stably complex manifolds. Geometrically, a stably complex structure ($U$-structure) on a manifold $M$ is a choice of a complex structure on its stable tangent bundle, or a reduction of the structure group of the stable tangent bundle to the unitary group~$U=U(\infty)$. Homotopically, a stably complex structure is the homotopy class of a lift of the map $M\to BO$ classifying the stable tangent bundle to a map $M\to BU$. The bordism classes of stably complex manifolds form  a graded ring with respect to the operations of disjoint union and cartesian product, called the \emph{complex bordism ring} and denoted by~$MU_*$. It is the coefficient ring of the \emph{complex bordism theory}, the generalised (co)homology theory defined by the \emph{Thom spectrum} $\MU=\{\MU(n)\}$, where $MU(n)$ is the Thom space of the universal $U(n)$-bundle $EU(n)\to BU(n)$. Given a CW-pair $(X,A)$, its bordism and cobordims groups are defined by
\[
\begin{aligned}
  MU_n(X,A)&=\lim_{k\to\infty}\pi_{2k+n}\bigl((X/A)\wedge\MU(k)\bigr),\\
  MU^n(X,A)&=\lim_{k\to\infty}\bigl[\varSigma^{2k-n}(X/A),\MU(k)\bigr]
   \quad \text{for a finite CW-pair $(X,A)$}.
\end{aligned}
\]
In particular, $  MU_*=\pi_*(\MU)=MU_*(\pt)=
\lim_{k\to\infty}\pi_{2k+*}\bigl(\MU(k)\bigr)$. We also denote $MU^*=MU^*(\pt)$, the \emph{complex cobordism ring}, graded nonpositively.

\emph{$SU$-bordism} is the bordism theory of smooth manifolds with a special unitary structure in the stable tangent bundle. Geometrically, an $SU$-structure on a manifold $M$ is defined by a reduction of the structure group of the stable tangent bundle of~$M$ to the group $SU(N)$. Homotopically, an $SU$-structure is the homotopy class of a lift of the classifying map 
$M\to BO(2N)$ to a map $M\to BSU(N)$. A manifold $M$ admits an $SU$-structure whenever it admits a stably complex structure with $c_1(\mathcal TM)=0$. The \emph{$SU$-bordims ring} $\MSU_*=\pi_*(\MSU)$ is the coefficient ring of the \emph{$\SU$-bordism theory}, defined by the Thom spectrum 
$\MSU=\{\MSU(n)\}$.

The details of the construction of the $\SU$-bordism and the description of the coefficient ring $\MSU_*$ can be found in~\cite{novi67,ston68,c-l-p19}.

A (stable) \emph{operation} $f$ of degree $n$ in complex cobordism is a family of additive maps
\[
  f\colon\MU^k(X,A)\to\MU^{k+n}(X,A)
\] 
which are functorial with respect to $(X,A)$ and commute with the suspension isomorphisms. The set of all operations is an $\MU^*$-algebra, denoted by~$A^U$. It can be identified with the self-maps of the $\MU$ spectrum:
\[
  A^U\cong[\MU,\MU]_*=MU^*(\MU)=\lim\limits_{\longleftarrow}\MU^{*+2N}(\MU(N)).
\]
There is an isomorphism of left $\MU^*$-modules\\[-2mm] 
\[
  A^U\cong \MU^*\mathbin{\widehat\otimes} S, 
\]
where $S$ is the \emph{Landweber--Novikov algebra}, generated by the operations $S_\omega=\varphi^*(s^{\scriptscriptstyle U}_\omega)$ corresponding via the Thom isomorphism $\varphi^*$ to the universal characteristic classes $s^{\scriptscriptstyle U}_\omega\in\MU^*(\BU)$ defined by symmetrising the monomials $t_1^{i_1}\cdots t_k^{i_k}$ indexed by partitions $\omega=(i_1,\ldots,i_k)$. Therefore, any element $a\in A^U$ can be written uniquely as an infinite series $a = \sum_{\omega} \lambda_{\omega} S_{\omega}$ where $\lambda_{\omega}\in\MU^*$. The Hopf algebra structure of $S$ is described in~\cite{land67} and~\cite[\S5]{novi67}. 

The spectrum $\MU$ is an $\MSU$-module via the forgetful morphism $\MSU\to\MU$, and an operation $f\colon\MU\to\MU$ is \emph{$\MSU$-linear} if it is an $\MSU$-module map. By the standard property of spectra with torsion-free homotopy and homology groups, the $\MSU$-linearity of an operation $f\colon\MU\to\MU$ can be detected by its action on the coefficients $MU_*=\pi_*(\MU)$. Namely, an operation $f$ is $\SU$-linear if and only if it satisfies $f(ab)=a f(b)$ for any $a\in\MSU_*$, $b\in\MU_*$ (see Theorem~\ref{su-linmu}).

A family of geometric operations $\partial_i\in[\MU,\MU]_{-2i}=[\MU,\Sigma^{2i}\MU]$ was defined by Conner and Floyd~\cite{co-fl66} and studied further by Novikov~\cite{novi67}. The operation $\partial_i$ assigns to a complex bordism class $[M]\in \MU_{2n}$ the bordism class of the submanifold $M_i\subset M$ dual to~$(\det\mathcal T M)^{\oplus i}$ (the $i$-fold direct sum of the determinant of the tangent bundle). In particular, $\partial_1=\partial\colon \MU_{2n}\to \MU_{2n-2}$ is the ``boundary operation'' that takes $[M]$ to the bordism class of the submanifold dual to $c_1(\mathcal TM)$. Clearly, $\partial[M]$ belongs to the image of the forgetful map $\MSU_* \to \MU_*$. Furthermore, the operations $\partial_i$ are $SU$-linear by inspection. 

In Section~\ref{linsect} we describe the algebra of $\SU$-linear operations in complex cobordism. We show that the operations $\partial_i$, $i=1,2,\ldots$, form a topological basis of the left $MU^*$-module of $\SU$-linear operations. That is, any $SU$-linear operation $f \in [\MU, \MU]_{\MSU,*}$ can be written uniquely as a series $f=\sum_{i \ge 0} \mu_i \partial_i$ with $\mu_i \in \MU^{-2i-*}$, see Theorem~\ref{su-lin}. In Theorem~\ref{comp} we describe the product structure of $\SU$-linear operations under composition via the coefficients of the formal group law in complex cobordism.

Conner and Floyd~\cite{co-fl66} and Stong~\cite{ston68} defined \emph{$c_1$-spherical bordism} $W$, an intermediate theory between the $\SU$- and $U$-bordism, following a similar construction of Wall in oriented bordism. The theory $W$ was a key technical tool in Conner and Floyd's calculation of torsion in $SU$-bordism. Both~\cite{co-fl66} and~\cite{ston68} defined a multiplicative structure on $W$ using an $\SU$-linear projection $\pi\colon\MU\to W$. Stong~\cite{ston68} showed that the coefficient ring of the theory $W$ is polynomial with respect to the multiplication defined by his projection. Although Conner--Floyd and Stong defined their projections differently, in the subsequent literature on $\SU$- and $c_1$-spherical bordism the two projections were used interchangeably, suggesting that the two may coincide. As shown in~\cite[\S6]{c-l-p19}, the Conner--Floyd and Stong projections are different, despite defining the same multiplication on~$W$ (see Example~\ref{CFproj}). 

In Section~\ref{Wsect} we give several descriptions of projections $\pi\colon\MU\to W$ and give the conditions specifying $SU$-linear projections. We express the $SU$-linear Stong projection $\pi_0\colon\MU\to W$ as a series in operations $\partial_i$ via the coefficients of the formal group law in complex cobordism (Proposition~\ref{pi}), and show that any other projection $\pi\colon\MU\to W$ has the form $\pi_0(1+f\varDelta)$ for some operation $f\in[\MU,\Sigma^{-4}\MU]$ (Theorem~\ref{pr1}), where $\varDelta\in[\MU,\Sigma^{4}\MU]$ is the Conner--Floyd operation satisfying $W=\Ker\varDelta$. In this description, $\SU$-linear projections correspond to $\SU$-linear operations~$f$. In Theorem~\ref{multgen} we describe all $\SU$-linear multiplications on $c_1$-spherical bordism $W$, and specify a condition for the multiplication to be defined by an $\SU$-linear projection.

In Section~\ref{fgsect} we study complex orientations of the theory $W$ and the corresponding formal group laws. Any complex orientation $w\in\widetilde W^2(\C P^{\infty})$ is obtained by applying an $SU$-linear projection to an orientation 
$\widetilde u \in \widetilde {\MU} \vphantom{MU}^2(\C P^{\infty})$ of complex cobordism (Proposition~\ref{worie}). A multiplication on $W$ together with a complex orientation $w$ define a formal group law $F_W\in W_*[[u,v]]$. This formal group law was studied by Buchstaber in~\cite{buch72}, where it was stated that the coefficients of $F_W$ do not generate the ring $W_*$, unlike the situation with complex cobordism. We give a complete proof of this claim in Theorem~\ref{notgen}. It is shown that the coefficients of $F_W$ do not generate
$W_*$ for any multiplication and complex orientation of $W$, not just for the standard ones defined by the Stong projection. We also prove another statement from~\cite{buch72}: after inverting $2$, the ring generated by the coefficients of $F_W$ for some orientation $w$ coincides with $W_*[\frac12]$ (Theorem~\ref{1p}).

It would be interesting to give a geometric construction for the multiplicative transformation (genus) $\MU\to W$ classifying the formal group law $F_W$, for example, in terms of the geometric generators of $W_*$ and $\MSU_*[\frac12]$ described in~\cite[Part~II]{c-l-p19}. A related construction of polynomial generators of $\MSU_*[\frac12]$ associated with the classifying maps for the Abel, Buchstaber and Krichever formal group laws was recently given by Bakuradze~\cite{baku} (the Krichever formal group law gives rise to the Krichever--Hoehn complex elliptic genus).

\begin{acknowledgements}
The authors benefitted greatly from the advice and encouragement of Victor Buchstaber. We thank Tom Bachmann for the inspiring discussion of $\SU$-linear operations in complex cobordism, and in particular for his question on whether the geometric operations of Conner and Floyd form a topological basis of $\SU$-linear operations, which resulted in our Theorem~\ref{su-lin}.  We also thank the referee for valuable comments and corrections.
\end{acknowledgements}

\section{$\SU$-linear operations in complex cobordism}\label{linsect}

We are interested in cohomology operations and their linearity properties with respect to module pairings in cohomology theories. Therefore, we work in the stable homotopy category (see~\cite{adam74, swit75, marg83, rudy98, ba-ro20}) and do not use any strict models for spectra. All modules, rings and linear properties (as in Definition~\ref{lin}) below are considered in the homotopy sense. We only use the monoidal structure of the stable homotopy category and \emph{do not} use any strict monoidal model categories of spectra (see however the remark after Proposition~\ref{bij}). Whenever we refer to a spectrum or a map (between spectra) it means an object or a morphism in the stable homotopy category.

\subsection{Equivalent definitions of $\SU$-linearity}\label{rlinsect}

Given spectra $X$ and $Y$, we denote by $[X, Y]$ the set of morphisms (in the stable homotopy category) between them, which is an abelian group, and denote by $\pi_*(X)$ the homotopy groups of~$X$.

\begin{definition}\label{lin}
Let $R$ be a commutative ring spectrum, and let $E$ and $F$ be $R$-modules. Consider the following properties of a morphism $f \in [E, F]$:
\begin{itemize}
\item[(a)] $f$ is \emph{$R$-linear}, that is, the following square is commutative (in the stable homotopy category)
$$
\xymatrix{
R \wedge E \ar[r]^{1 \wedge f} \ar[d] & R \wedge F \ar[d] \\
E \ar[r]^{f} & F
}
$$

\item[(b)] $f$ is {\itshape linear with respect to the multiplication by elements of} $\pi_*(R)$, that is, for any $r \in \pi_k(R)$ the following diagram of spectra is commutative (in the stable homotopy category)
$$
\xymatrix{
\Sigma^k E \ar[r]^{r\cdot} \ar[d]_{\Sigma^k f} & E \ar[d]^{f} \\
\Sigma^k F \ar[r]^{r\cdot} & F
}
$$
\item[(c)] $\pi_*(f) \in \Hom (\pi_*(E), \pi_*(F))$ is $\pi_*(R)${\itshape-linear}.
\end{itemize}
\end{definition}

\begin{proposition}
Property (a) implies (b), and (b) implies (c).
\end{proposition}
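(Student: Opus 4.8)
The plan is to prove the two implications separately, each by unwinding the relevant definitions and using naturality of the maps in the stable homotopy category.

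\textbf{The plan is to} prove the two implications separately, each being a formal diagram chase in the stable homotopy category that rests only on the module action axioms and the bifunctoriality of the smash product. First I would fix conventions for the two kinds of scalar multiplication that appear in the statement. For $r\in\pi_k(R)$, represented by a map $r\colon\Sigma^k\mathbb S\to R$, the multiplication-by-$r$ map $r\cdot\colon\Sigma^k E\to E$ of property~(b) is the composite of $r\wedge 1_E\colon\Sigma^k\mathbb S\wedge E\to R\wedge E$ with the module action $\mu_E\colon R\wedge E\to E$, under the canonical identification $\Sigma^k E\cong\Sigma^k\mathbb S\wedge E$; the map $r\cdot$ on $F$ is defined the same way using $\mu_F$. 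Likewise the $\pi_*(R)$-module structure on $\pi_*(E)$ of property~(c) sends $r\in\pi_k(R)$ and $x\in\pi_n(E)$, the latter represented by $x\colon\Sigma^n\mathbb S\to E$, to the class of $\mu_E\circ(r\wedge x)$.

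For (a)$\Rightarrow$(b), I would chase the square of~(b) starting from the composite $f\circ(r\cdot)=f\circ\mu_E\circ(r\wedge 1_E)$ along the left and bottom edges. Property~(a) rewrites $f\circ\mu_E$ as $\mu_F\circ(1_R\wedge f)$, and the interchange law for the smash product rewrites $(1_R\wedge f)\circ(r\wedge 1_E)=r\wedge f=(r\wedge 1_F)\circ(1_{\Sigma^k\mathbb S}\wedge f)$. Reassembling gives $\mu_F\circ(r\wedge 1_F)\circ\Sigma^k f=(r\cdot)\circ\Sigma^k f$, which is the composite along the top and right edges; hence the square commutes and~(b) holds.

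For (b)$\Rightarrow$(c), I would first observe that, under $\Sigma^{k+n}\mathbb S\cong\Sigma^k\Sigma^n\mathbb S$, the product $r\cdot x$ in $\pi_*(E)$ equals $(r\cdot)\circ\Sigma^k x$. Applying $\pi_*(f)$, that is, post-composing with $f$, gives $f\circ(r\cdot)\circ\Sigma^k x$; property~(b) turns this into $(r\cdot)\circ\Sigma^k f\circ\Sigma^k x=(r\cdot)\circ\Sigma^k(f\circ x)=r\cdot\pi_*(f)(x)$. Since post-composition with a morphism of spectra is automatically a homomorphism of abelian groups, $\pi_*(f)$ is additive as well, so it is a $\pi_*(R)$-module homomorphism, which is~(c).

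\textbf{The main obstacle}, such as it is, is purely bookkeeping: one must track the canonical identifications $\Sigma^k E\cong\Sigma^k\mathbb S\wedge E$ and $\Sigma^{k+n}\mathbb S\cong\Sigma^k\Sigma^n\mathbb S$, and apply the interchange law $(1_R\wedge f)\circ(r\wedge 1_E)=(r\wedge 1_F)\circ(1_{\Sigma^k\mathbb S}\wedge f)$ in the correct direction. Beyond this there is no genuine difficulty, since both implications are forced by the module action axioms and the bifunctoriality of~$\wedge$ in the stable homotopy category, and no strict point-set model for spectra is required.
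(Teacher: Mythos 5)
Your proof is correct and follows essentially the same route as the paper: for (a)$\Rightarrow$(b) you expand the scalar-multiplication map as the action composed with $r\wedge 1$ and use bifunctoriality of the smash product together with property~(a), and for (b)$\Rightarrow$(c) you factor $r\cdot x$ through the multiplication-by-$r$ map and apply~(b), which is exactly the paper's pair of diagram chases written equationally. No gaps; the bookkeeping of the identifications $\Sigma^k E\simeq \Sigma^k\mathbb{S}\wedge E$ is handled correctly.
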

\begin{proof}(a)$\Rightarrow$(b): The diagram of~(b) is expanded as follows:
$$
  \xymatrix{
  S^k \wedge E \ar[r]^{r \wedge 1} \ar[d]_{1 \wedge f} & R \wedge E \ar[r] \ar[d] ^{1 \wedge f} & 
  E   \ar[d]^{f} \\ S^k \wedge F \ar[r]^{r \wedge 1} & R \wedge F \ar[r] & F 
  }
$$
The left square commutes, and the commutativity of the right square is asserted by~(a). So the whole diagram also commutes, which implies~(b).

\smallskip

(b)$\Rightarrow$(c): For $r \in \pi_k(R)$ and $a \in \pi_n(E)$, the condition $\pi_*(f)(ra)=r\pi_*(f)(a)$ is expressed as the commutativity of the outer diagram:
$$
\xymatrix{
  &R \wedge E \ar[r] & E \ar[dr]^f & \\
  S^k \wedge S^n \ar[ur]^{r \wedge a} \ar[dr]^{1 \wedge a} & & & F \\
  & S^k \wedge E \ar[r]^{r \wedge f} \ar[uu]_{r \wedge 1} & R \wedge F \ar[ur]
  }
$$
Here the left triangle is commutative, and the commutativity of the right part is asserted by~(b).
\end{proof}

Alongside with $[E,F]$ we consider the graded abelian group $[E, F]_*$ with graded components given by $[E, F]_k = [\Sigma^kE,F]=[E,\Sigma^{-k}F]=F^{-k}(E)$, $k\in\Z$. 

We are interested in $\SU$-linear operations $f\in[\MU,\MU]_*$ in complex cobordism, which correspond to $R=\MSU$, $E=\MU$ and $F=\Sigma^k \MU$  in the notation of Definition~\ref{lin}. In this case the three versions of $SU$-linearity agree:

\begin{theorem}\label{su-linmu}
The three conditions in Definition~\ref{lin} are equivalent for $\SU$-linear operations $f \in [\MU, \MU]_*$ in complex cobordism.
\end{theorem}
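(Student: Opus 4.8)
The preceding Proposition already gives (a)$\Rightarrow$(b)$\Rightarrow$(c), so my plan is to close the circle by proving (c)$\Rightarrow$(a). Writing $F=\Sigma^k\MU$, the square in part~(a) of Definition~\ref{lin} asserts the equality of the two composites
\[
\alpha=\mu_F\circ(1\wedge f),\qquad \beta=f\circ\mu_E
\]
in the group $[\MSU\wedge\MU,\Sigma^k\MU]$, where $\mu_E\colon\MSU\wedge\MU\to\MU$ and $\mu_F$ are the module action maps. The essential difficulty is that maps into $\MU$ are in general \emph{not} detected by their action on ordinary homotopy $\pi_*(\MU)=MU_*$: already the operations $[\MU,\MU]_*$ fail to act faithfully on $MU_*$, so one cannot hope to deduce $\alpha=\beta$ from $\pi_*(\alpha)=\pi_*(\beta)$ directly. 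I would therefore detect maps on $\MU$-\emph{homology}, and bring in ordinary homotopy only rationally, where it does suffice.

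The first step is to record that $MU_*(\MSU\wedge\MU)$ is a free $MU_*$-module. Indeed, the Thom isomorphism identifies $MU_*(\MSU)$ with $MU_*(\BSU)$, and since $H_*(\BSU;\Z)$ is torsion-free and concentrated in even degrees, the Atiyah--Hirzebruch spectral sequence collapses and $MU_*(\MSU)$ is $MU_*$-free; smashing with $\MU$ (whose $MU$-homology $MU_*(\MU)$ is already free) preserves freeness via the K\"unneth isomorphism. Consequently the $MU$-theory universal coefficient theorem applies: the evaluation map
\[
[\MSU\wedge\MU,\Sigma^k\MU]\;\xrightarrow{\ \cong\ }\;\Hom_{MU_*}\bigl(MU_*(\MSU\wedge\MU),MU_*\bigr),\qquad \theta\mapsto\phi_\theta,
\]
is an isomorphism onto a torsion-free group, and in particular involves no phantom or $\lim^1$ contributions. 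Thus it suffices to prove $\phi_\alpha=\phi_\beta$, and since both are $MU_*$-linear maps into the torsion-free ring $MU_*$, it is enough to check $\phi_\alpha(x)=\phi_\beta(x)$ after tensoring with $\Q$, for every $x\in MU_*(\MSU\wedge\MU)$.

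This is where condition~(c) enters, but only rationally. The rational Hurewicz theorem gives $MU_*(\MSU\wedge\MU)\otimes\Q\cong(\pi_*(\MSU\wedge\MU)\otimes\Q)\otimes_{\Q}(MU_*\otimes\Q)$, so that $MU_*(\MSU\wedge\MU)\otimes\Q$ is spanned over $MU_*\otimes\Q$ by the Hurewicz images of rational homotopy classes, and on such an image $\phi_\theta$ returns $\pi_*(\theta)$ of the corresponding class. Rational K\"unneth further identifies $\pi_*(\MSU\wedge\MU)\otimes\Q$ with $(\MSU_*\otimes\Q)\otimes_\Q(MU_*\otimes\Q)$, i.e.\ with the span of products $r\cdot b$. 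On such a product $\pi_*(\alpha)(r\cdot b)=r\cdot\pi_*(f)(b)$ while $\pi_*(\beta)(r\cdot b)=\pi_*(f)(r\cdot b)$, and these coincide precisely by the $\MSU_*$-linearity of $\pi_*(f)$ asserted in~(c). Hence $\phi_\alpha\otimes\Q=\phi_\beta\otimes\Q$; evaluating on each $x$ and using that $MU_*\hookrightarrow MU_*\otimes\Q$ is injective gives $\phi_\alpha=\phi_\beta$, and therefore $\alpha=\beta$, which is~(a).

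The main obstacle is the second step: verifying the freeness of $MU_*(\MSU\wedge\MU)$ and the resulting universal coefficient isomorphism. This is exactly the ``torsion-free homotopy and homology'' input, and it is what licenses working with a genuine $\Hom$-group and reduces the problem to a pointwise comparison in $MU_*$. Once that is in place, the rational Hurewicz and K\"unneth bookkeeping is routine, and torsion-freeness of $MU_*$ performs the final descent from $\Q$ back to $\Z$. I should take care to state correctly the identity $\phi_\theta\bigl(h(x)\bigr)=\pi_*(\theta)(x)$ on Hurewicz images, since this is the single point at which the homotopy-level hypothesis~(c) is actually consumed.
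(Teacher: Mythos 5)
Your argument is mathematically sound, but it takes a genuinely different route from the paper's, so let me compare. The paper proves (c)$\Rightarrow$(a) by detecting the difference $\varphi=\alpha-\beta$ directly on integral homotopy groups: Lemma~\ref{monic} (Rudyak's Lemma VII.3.2) gives injectivity of $[E,F]\to\Hom(\pi_*(E),\pi_*(F))$ whenever $E,F$ are connective of finite type with $H_*(E)$ and $\pi_*(F)$ torsion-free, and this applies to $E=\MSU\wedge\MU$, $F=\Sigma^k\MU$ since $H_*(\MSU)\cong H_*(\BSU)$ and $H_*(\MU)$ are torsion-free. The only remaining issue --- that condition (c) controls $\varphi$ only on the image of $\pi_*(\MSU)\otimes\pi_*(\MU)$ rather than on all of $\pi_*(\MSU\wedge\MU)$ --- is resolved in Lemma~\ref{pmonic} by exactly the rationalization you use: $\pi_*(E)\otimes\pi_*(F)\otimes\Q\to\pi_*(E\wedge F)\otimes\Q$ is an isomorphism, and torsion-freeness of the target homotopy lets one rationalize the $\Hom$ groups without losing injectivity. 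Your route replaces the integral-homotopy detection by $MU$-homology detection: freeness of $MU_*(\MSU\wedge\MU)$ (via the Thom isomorphism, the collapsing Atiyah--Hirzebruch spectral sequence, and K\"unneth), the universal coefficient isomorphism, and the evaluation identity $\phi_\theta(h(x))=\pi_*(\theta)(x)$ on Hurewicz images, followed by the same rational K\"unneth bookkeeping and descent along $MU_*\hookrightarrow MU_*\otimes\Q$. Each of these steps is correct, and the evaluation identity you single out does hold: for $x\colon S^n\to X$ one has $\phi_\theta(h(x))=\mu\circ(1\wedge\theta)\circ(e\wedge1)\circ x=\theta\circ x$. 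What your approach buys is independence from Rudyak's lemma, at the cost of invoking the Adams universal coefficient theorem and the freeness computation; what the paper's approach buys is brevity given the citation, and a statement (Lemma~\ref{pmonic}) that immediately generalizes to smash products of arbitrarily many factors, which is then used for Theorem~\ref{su-polylin}.

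One assertion in your write-up is, however, false and worth correcting, even though your proof does not logically depend on it: you claim that maps into $\MU$ are not detected on homotopy and that ``the operations $[\MU,\MU]_*$ fail to act faithfully on $MU_*$''. By precisely Rudyak's lemma --- $H_*(\MU)$ and $\pi_*(\MU)$ are both torsion-free --- the map $[\MU,\MU]\to\Hom(MU_*,MU_*)$ \emph{is} injective, so operations act faithfully on the coefficients. The paper states this explicitly in the introduction and exploits it repeatedly (in the proofs of Proposition~\ref{deltalin}, Theorem~\ref{comp} and Proposition~\ref{conn}). So the difficulty motivating your detour through $MU$-homology does not exist; the detour is valid but unnecessary.
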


The proof consists of three lemmata. First, there is the following useful result relating a morphism of spectra to its action on the homotopy groups.

\begin{lemma}[{\cite[Lemma~VII.3.2]{rudy98}}]\label{monic}\ 
Let $E$ and $F$ be connective spectra of finite type such that $H_*(E)$ and 
$\pi_*(F)$ are torsion-free. Then the natural map 
\[
  p \colon [E, F] \to \Hom(\pi_*(E), \pi_*(F))
\]   
is injective.
\end{lemma}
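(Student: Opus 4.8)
The plan is to prove the statement in its contrapositive form: if $f\in[E,F]$ satisfies $\pi_*(f)=0$, then $f$ is null-homotopic. The leverage on the hypothesis on $H_*(E)$ comes from choosing a \emph{minimal} cell structure on $E$. Since $E$ is connective of finite type and $H_*(E)$ is torsion-free, its integral homology is free and finitely generated in each degree, so $E$ admits a CW-structure whose cellular chain complex has vanishing differential, with exactly $\rank H_n(E)$ cells in dimension~$n$. Writing $E^{(n)}$ for the $n$-skeleton, one gets cofibre sequences $E^{(n-1)}\to E^{(n)}\xrightarrow{q}E^{(n)}/E^{(n-1)}\simeq\bigvee S^{n}$ in which the wedge of $n$-spheres is indexed by a basis of $H_n(E)$. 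The strategy is to show by induction on $n$ that $f|_{E^{(n)}}\simeq 0$, choosing the null-homotopies compatibly, and then to glue them over $E=\operatorname{hocolim}_n E^{(n)}$.

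For the inductive step, assume a null-homotopy $H_{n-1}$ of $f|_{E^{(n-1)}}$ has been chosen. Applying $[-,F]$ to the cofibre sequence gives an exact sequence $[\Sigma E^{(n-1)},F]\xrightarrow{\delta^*}[\bigvee S^n,F]\xrightarrow{q^*}[E^{(n)},F]\xrightarrow{i^*}[E^{(n-1)},F]$, and since $i^*(f|_{E^{(n)}})=f|_{E^{(n-1)}}\simeq 0$, the null-homotopy $H_{n-1}$ presents $f|_{E^{(n)}}$ as $q^*\bar f$ for a well-defined obstruction $\bar f\in[\bigvee S^n,F]$; moreover $\bar f=0$ is exactly the condition that $H_{n-1}$ extends to a null-homotopy of $f|_{E^{(n)}}$. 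Because there are finitely many $n$-cells, $[\bigvee S^n,F]=\bigoplus\pi_n(F)=\Hom(H_n(E),\pi_n(F))$, so $\bar f$ is a homomorphism $H_n(E)\to\pi_n(F)$, and it suffices to prove $\bar f=0$.

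This is the heart of the matter and the main obstacle: the hypothesis controls the action of $f$ on \emph{homotopy}, i.e.\ the map $\pi_n(f)\in\Hom(\pi_n(E),\pi_n(F))$, whereas $\bar f$ is indexed by \emph{homology}. The bridge is the Hurewicz map $h\colon\pi_n(E)\to H_n(E)$: pushing a class $x\in\pi_n(E)$ into $E^{(n)}$ by cellular approximation and identifying $q_*$ with $h$ under $\pi_n(\bigvee S^n)\cong H_n(E)$, a short diagram chase gives the identity $\pi_n(f)=\bar f\circ h$, so $\bar f$ vanishes on $\Im(h)$. Here I would invoke that the rational Hurewicz map of a spectrum is an isomorphism, $\pi_*(E)\otimes\Q\cong H_*(E;\Q)$ (rationalisation is smashing and the rational sphere spectrum is $H\Q$); since $H_n(E)$ is finitely generated, $\Im(h)$ is a subgroup of full rank, so the quotient $H_n(E)/\Im(h)$ is finite. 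As $\bar f$ factors through this torsion group and $\pi_n(F)$ is torsion-free by hypothesis, it must vanish. Hence $\bar f=0$, the null-homotopy extends, and the induction continues with a compatible $H_n$.

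It remains to assemble the compatible null-homotopies $H_n$ into a null-homotopy of $f$ on $E$. One can either glue them directly over the mapping telescope $\simeq\operatorname{hocolim}_n E^{(n)}$, or phrase the conclusion through the Milnor sequence
\[
0\to\varprojlim{}^{1}\,[\Sigma E^{(n)},F]\to[E,F]\to\varprojlim\,[E^{(n)},F]\to 0,
\]
in which the induction shows that $f$ dies in $\varprojlim[E^{(n)},F]$. This last assembly is the one genuinely technical point; I expect it to be handled by the finite-type hypotheses together with the coherence of the null-homotopies constructed above, which is precisely what removes the potential $\varprojlim{}^{1}$-contribution. Granting this, $f\simeq 0$, and therefore $p$ is injective.
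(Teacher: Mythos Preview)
The paper does not give its own proof of this lemma; it is quoted verbatim from Rudyak's book and used as a black box. So there is no argument in the paper to compare against.

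Your obstruction-theoretic proof via a minimal cell structure is the standard approach and is essentially correct. The key step---identifying the obstruction $\bar f\in\Hom(H_n(E),\pi_n(F))$ and showing it vanishes because $\pi_n(f)=\bar f\circ h$, the image of the Hurewicz map $h$ has finite index in $H_n(E)$ (rational Hurewicz for spectra), and $\pi_n(F)$ is torsion-free---is exactly right. One small clarification: to get $\pi_n(f)=\bar f\circ h$ you implicitly use that $\pi_n(E^{(n)})\to\pi_n(E)$ is surjective (from the cofibre sequence $E^{(n)}\to E\to E/E^{(n)}$ with $E/E^{(n)}$ $n$-connected) and that $q_*$ on $\pi_n$ agrees with the Hurewicz map of $E^{(n)}$ under the identification $H_n(E^{(n)})\cong H_n(E)$; both hold in a minimal structure, so the diagram chase goes through.

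The only genuinely soft spot is the one you flag yourself: the final assembly. Your instinct is correct that the coherence of the null-homotopies is what matters. Since at each stage you are \emph{extending} $H_{n-1}$ to $H_n$ (not merely proving existence of some null-homotopy on $E^{(n)}$), the family $\{H_n\}$ assembles to a map $\operatorname{hocolim}_n CE^{(n)}\simeq CE\to F$ restricting to $f$ on $E$, which is a null-homotopy of $f$. Phrased via the Milnor sequence, the inductive construction produces not just the zero element of $\varprojlim[E^{(n)},F]$ but a specific preimage of it, so the $\varprojlim{}^1$ ambiguity never arises. If you want to avoid this discussion entirely, an alternative is to observe that under the hypotheses each $[\Sigma E^{(n)},F]$ is a finitely generated abelian group and the transition maps are eventually surjective in each degree, giving Mittag-Leffler; but the coherent-extension argument you sketch is cleaner.
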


Lemma~\ref{monic} can be generalised to morphisms involving three spectra.
Given three spectra $E,F,G$, there is a natural map $q \colon [E \wedge F, G] \to \Hom (\pi_* (E) \otimes \pi_*(F), \pi_*(G))$ defined as follows: for $f \in [E \wedge F, G]$ and $a \in \pi_k(E), \, b \in \pi_n(F)$, the element $q(f)(a,b) \in \pi_{k+n}(G)$ is represented by the map $S^k \wedge S^n \xrightarrow{a \wedge b} E \wedge F \xrightarrow{f} G$.

\begin{lemma} \label{pmonic}
Let $E$, $F$ and $G$ be connective spectra of finite type such that $H_*(E)$, $H_*(F)$ and 
$\pi_*(G)$ are torsion-free. 
Then the natural map 
\[
  q \colon [E \wedge F, G] \to \Hom (\pi_* (E) \otimes \pi_*(F), \pi_*(G))
\]  
is injective.
\end{lemma}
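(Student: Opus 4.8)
The plan is to deduce Lemma~\ref{pmonic} from the two-variable Lemma~\ref{monic} applied to the single spectrum $E\wedge F$, correcting for the discrepancy between $\pi_*(E\wedge F)$ and $\pi_*(E)\otimes\pi_*(F)$ by a rationalisation argument that exploits the torsion-freeness of $\pi_*(G)$.

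First I would verify that $E\wedge F$ meets the hypotheses of Lemma~\ref{monic}. A smash product of connective spectra of finite type is again connective of finite type. Since $H_*(E)$ and $H_*(F)$ are torsion-free and finitely generated in each degree, they are free, so the K\"unneth theorem gives $H_*(E\wedge F)\cong H_*(E)\otimes H_*(F)$ with vanishing $\operatorname{Tor}$ term; this group is again free, hence torsion-free. As $\pi_*(G)$ is torsion-free by hypothesis, Lemma~\ref{monic} applies to the pair $(E\wedge F,G)$ and shows that $p\colon[E\wedge F,G]\to\Hom(\pi_*(E\wedge F),\pi_*(G))$ is injective.

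Next I would record that $q$ factors through $p$. Writing $\mu\colon\pi_*(E)\otimes\pi_*(F)\to\pi_*(E\wedge F)$ for the homotopy cross product that sends $a\otimes b$ to the class of $S^k\wedge S^n\xrightarrow{a\wedge b}E\wedge F$, the definition of $q$ gives $q(f)(a\otimes b)=f_*(\mu(a\otimes b))=(p(f)\circ\mu)(a\otimes b)$, so $q=\mu^*\circ p$, where $\mu^*$ denotes precomposition with $\mu$. Thus if $q(f)=0$, then $p(f)$ annihilates the image of $\mu$.

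The decisive point --- and the step I expect to be the main obstacle --- is that $\mu$ is in general \emph{not} surjective, so one cannot conclude directly that $\mu^*$ is injective. I would resolve this by passing to rational coefficients. For every spectrum the rationalised Hurewicz map gives a natural isomorphism $\pi_*(-)\otimes\Q\cong H_*(-;\Q)$, and under it $\mu\otimes\Q$ is identified with the homology cross product $H_*(E;\Q)\otimes_{\Q}H_*(F;\Q)\to H_*(E\wedge F;\Q)$, which is an isomorphism by the K\"unneth theorem over the field~$\Q$. Hence $\mu\otimes\Q$ is surjective. Now $p(f)\circ\mu=0$ yields $(p(f)\otimes\Q)\circ(\mu\otimes\Q)=0$, and surjectivity of $\mu\otimes\Q$ forces $p(f)\otimes\Q=0$. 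Since $\pi_*(G)$ is torsion-free, the canonical map $\pi_*(G)\to\pi_*(G)\otimes\Q$ is injective, so $p(f)=0$, and the injectivity of $p$ from Lemma~\ref{monic} then gives $f=0$. The only technical care required is the naturality identifying $\mu\otimes\Q$ with the homology cross product, that is, the compatibility of the Hurewicz map with smash products.
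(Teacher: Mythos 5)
Your proposal is correct and follows essentially the same route as the paper: both factor $q$ through the injection $p\colon[E\wedge F,G]\to\Hom(\pi_*(E\wedge F),\pi_*(G))$ given by Lemma~\ref{monic}, and both overcome the non-surjectivity of $\mu\colon\pi_*(E)\otimes\pi_*(F)\to\pi_*(E\wedge F)$ by rationalising and using that $\pi_*(G)\to\pi_*(G)\otimes\Q$ is injective. The only cosmetic difference is how the rational isomorphism $\mu\otimes\Q$ is justified: you invoke the rational Hurewicz identification and the K\"unneth theorem over~$\Q$, while the paper observes that $\pi_*(-)\otimes\pi_*(F)\otimes\Q\to\pi_*(-\wedge F)\otimes\Q$ is a map of homology theories that is an isomorphism on the sphere spectrum --- two standard proofs of the same fact.
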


\begin{proof}
Applying Lemma~\ref{monic} to the spectra $E\wedge F$ and $G$ we obtain that the map
\[
  [E \wedge F, G] \to \Hom (\pi_* (E\wedge F), \pi_*(G))
\]  
is injective. We therefore need to prove the injectivity of the map
\[
  \Hom (\pi_* (E\wedge F), \pi_*(G))\to \Hom (\pi_* (E)\otimes\pi_*(F), \pi_*(G))
\]
induced by the natural map $\pi_* (E)\otimes\pi_*(F)\to\pi_*(E\wedge F)$.

We have the following commutative square
\[
\xymatrix{
  \Hom (\pi_* (E\wedge F), \pi_*(G)) \ar[r] \ar[d] & \Hom (\pi_* (E)\otimes\pi_*(F),   
  \pi_*(G)) \ar[d] \\
  \Hom (\pi_* (E\wedge F) \otimes \Q, \pi_*(G) \otimes \Q) \ar[r] & \Hom (\pi_* (E) 
  \otimes\pi_*(F) \otimes \Q, \pi_*(G) \otimes \Q)
}
\]
Here, the left map is the composite of the map $\Hom (\pi_* (E\wedge F), \pi_*(G)) \to \Hom (\pi_* (E\wedge F), \pi_*(G) \otimes \Q)$ induced by the homomorphism $\pi_*(G) \to \pi_*(G)\otimes \Q$, which is injective since $\pi_*(G)$ is torsion-free, and the natural isomorphism $\Hom (\pi_* (E\wedge F), \pi_*(G)\otimes \Q) \cong \Hom (\pi_* (E\wedge F) \otimes \Q, \pi_*(G) \otimes \Q)$. Hence, the left map in the diagram above is injective.

The bottom map is induced by the map $\pi_*(E) \otimes \pi_*(F) \otimes \Q \to \pi_*(E \wedge F) \otimes \Q$, which is an isomorphism. Indeed, it is a natural transformation of homology theories $\pi_*(-) \otimes \pi_*(F) \otimes \Q$ and $\pi_*(- \wedge F) \otimes \Q$, which is an isomorphism on the sphere spectrum. Hence, the bottom map in the diagram above is an isomorphism.

It follows that the top map in the commutative square above is injective, as needed.
\end{proof}

\begin{remark}
By induction, the statement of Lemma~\ref{pmonic} can be generalised to the smash product of an arbitrary number of spectra.
\end{remark}

\begin{lemma}\label{r-lin}
In the notation of Definition~\ref{lin},
let $R$, $E$ and $F$ be connective spectra of finite type such that $H_*(R)$, $H_*(E)$ and $\pi_*(F)$ are torsion-free. 
Then property (c) implies~(a).
\end{lemma}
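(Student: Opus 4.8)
The plan is to read property~(a) as an equality of two morphisms in the group $[R\wedge E,F]$ and then to detect that equality on homotopy groups using the injectivity furnished by Lemma~\ref{pmonic}; property~(c) will turn out to be exactly the resulting condition on $\pi_*$. Write $m_E\colon R\wedge E\to E$ and $m_F\colon R\wedge F\to F$ for the module action maps. Property~(a) asserts precisely that the two composites
$$
g_1=m_F\circ(1\wedge f),\qquad g_2=f\circ m_E
$$
coincide as elements of $[R\wedge E,F]$. I would apply Lemma~\ref{pmonic} with the three spectra $(E,F,G)$ appearing there taken to be $(R,E,F)$ here; the torsion-freeness hypotheses on $H_*(R)$, $H_*(E)$ and $\pi_*(F)$ demanded by Lemma~\ref{pmonic} are exactly the hypotheses of the present lemma. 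This gives that
$$
q\colon[R\wedge E,F]\to\Hom(\pi_*(R)\otimes\pi_*(E),\pi_*(F))
$$
is injective, so it suffices to verify $q(g_1)=q(g_2)$.

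Next I would compute both sides on a generator $a\otimes b$ with $a\in\pi_k(R)$, $b\in\pi_n(E)$, using the definition of $q$ and the fact that the module product $a\cdot b\in\pi_{k+n}(E)$ is represented by $S^k\wedge S^n\xrightarrow{a\wedge b}R\wedge E\xrightarrow{m_E}E$. Unwinding the composites through the smash product yields
$$
q(g_2)(a\otimes b)=\pi_*(f)(a\cdot b),\qquad q(g_1)(a\otimes b)=a\cdot\pi_*(f)(b),
$$
the first because $g_2=f\circ m_E$ simply applies $f$ to the product $a\cdot b$, and the second because $(1\wedge f)\circ(a\wedge b)=a\wedge\pi_*(f)(b)$, after which $m_F$ forms the product $a\cdot\pi_*(f)(b)$. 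Property~(c) is precisely the assertion that $\pi_*(f)(a\cdot b)=a\cdot\pi_*(f)(b)$ for all such $a$ and $b$, so $q(g_1)=q(g_2)$, and injectivity of $q$ forces $g_1=g_2$, which is property~(a).

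I do not expect a serious obstacle here: the argument is a direct application of Lemma~\ref{pmonic} combined with the definition of $q$, and the proof of the reverse implications (a)$\Rightarrow$(b)$\Rightarrow$(c) already shown guarantees that all three conditions will then be equivalent. The only point requiring genuine care is the bookkeeping in the two homotopy-group computations, namely correctly tracing $q(g_i)(a\otimes b)$ through the action maps $m_E$, $m_F$ and the smash $a\wedge b$, and confirming that the torsion-freeness hypotheses are aligned so that Lemma~\ref{pmonic} applies with $G=F$ carrying torsion-free homotopy.
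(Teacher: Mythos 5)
Your proposal is correct and follows essentially the same route as the paper's proof: the paper likewise forms the two composites $\varphi_1 = m_F\circ(1\wedge f)$ and $\varphi_2 = f\circ m_E$, applies the injectivity of $q\colon[R\wedge E,F]\to\Hom(\pi_*(R)\otimes\pi_*(E),\pi_*(F))$ from Lemma~\ref{pmonic}, and evaluates $q(\varphi_1-\varphi_2)(r,a)=r f_*(a)-f_*(ra)$, which vanishes by property~(c). The only cosmetic difference is that the paper checks $q(\varphi_1-\varphi_2)=0$ while you verify $q(g_1)=q(g_2)$ directly.
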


\begin{proof}
Suppose $f \in [E, F]$ satisfies property (c) of Definition~\ref{lin}. Consider the morphisms $\varphi_1 \colon R \wedge E \xrightarrow{1 \wedge f} R \wedge F \to F$, $\varphi_2 \colon R \wedge E \to E \xrightarrow{f} F$ and set $\varphi=\varphi_1-\varphi_2$. We need to prove that $\varphi=0$.

Applying the map $q \colon [R \wedge E, F] \to \Hom (\pi_* (R) \otimes \pi_*(E), \pi_*(F))$ to $\varphi \colon R \wedge E \to F$ we obtain $q(\varphi)(r, a) = r f_*(a) - f_*(ra)$, which is zero for any $r \in \pi_*(R)$, $a \in \pi_*(E)$ by property~(c). Then Lemma~\ref{pmonic} implies that $\varphi = 0$.
\end{proof}

\begin{proof}[Proof of Theorem~\ref{su-linmu}]
The result follows by setting $R=\MSU$, $E=\MU$ and $F =\Sigma^k  \MU$ in Lemma~\ref{r-lin}.
\end{proof}

\subsection{Description of $\SU$-linear operations in complex cobordism}\label{sulinsect}

The following proposition is contained implicitly in Conner and Floyd \cite[Chapter III]{co-fl66}, and its real version in Atiyah~\cite[Proposition~4.1]{ati61}.

\begin{proposition}\label{msu-lin}
There is an equivalence of $\MSU$-modules
\[
  \MU\simeq \MSU \wedge \Sigma^{-2} \mathbb C P^\infty.
\]
\end{proposition}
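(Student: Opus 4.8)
The plan is to realise the equivalence geometrically, by first splitting the classifying space $\BU$ and then applying the Thom spectrum functor. Consider the map $\mu\colon\BSU\times\BU(1)\to\BU$ classifying the external Whitney sum of the universal $SU$-bundle $\gamma_{SU}$ over $\BSU$ and the universal line bundle $\gamma_1$ over $\BU(1)=\C P^\infty$; concretely $\mu$ sends a pair $(\zeta,L)$ to $\zeta\oplus L$, so that $\mu^*\gamma\cong\gamma_{SU}\boxplus\gamma_1$, where $\gamma$ is the universal bundle over $\BU$. First I would show that $\mu$ is a homotopy equivalence. Both spaces are simply connected of finite type with torsion-free polynomial integral cohomology, $H^*(\BU)=\Z[c_1,c_2,\dots]$ and $H^*(\BSU\times\BU(1))=\Z[c_2,c_3,\dots]\otimes\Z[t]$. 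By the Whitney sum formula and $c_1(\gamma_{SU})=0$ one gets $\mu^*(c_1)=t$ and $\mu^*(c_k)=c_k+c_{k-1}t$ for $k\ge2$ (with $c_i$ the Chern classes of $\gamma_{SU}$). This is a triangular change of generators, so $\mu^*$ is a ring isomorphism, and a cohomology isomorphism between simply connected spaces of finite type is a homotopy equivalence.

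Next I would Thomify. The equivalence $\mu$ is covered by a bundle isomorphism $\gamma_{SU}\boxplus\gamma_1\to\gamma$, and applying the Thom spectrum functor yields
\[
  M\mu\colon M(\gamma_{SU}\boxplus\gamma_1)=\MSU\wedge M\gamma_1\longrightarrow M\gamma=\MU ,
\]
where I have used that the Thom space of an external Whitney sum is the smash product of Thom spaces, $\mathrm{Th}(\gamma_{SU}\boxplus\gamma_1)=\mathrm{Th}(\gamma_{SU})\wedge\mathrm{Th}(\gamma_1)$, together with $M\gamma_{SU}=\MSU$. Finally I would identify $M\gamma_1=\Sigma^{-2}\C P^\infty$: the Thom space of the universal line bundle is $\C P^\infty$ with its Thom class in degree~$2$, and the desuspension $\Sigma^{-2}$ places the bottom cell in degree~$0$, as required by the normalisation of Thom spectra of virtual bundles of rank~$0$. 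The grading is confirmed by the colimit bookkeeping $\MU=\operatorname{colim}_n\Sigma^{-2n}\Sigma^\infty MU(n)$ and $\MSU\wedge\Sigma^{-2}\C P^\infty=\operatorname{colim}_n\Sigma^{-2n-2}\Sigma^\infty\bigl(MSU(n)\wedge\C P^\infty\bigr)$, which matches after the shift $n\mapsto n+1$; it is also consistent rationally, since the Poincaré series of $\MSU_*\otimes\Q$ times that of $H_*(\Sigma^{-2}\C P^\infty;\Q)$ recovers $\prod_{i\ge1}(1-t^{2i})^{-1}=MU_*\otimes\Q$. As $\mu$ is a homotopy equivalence covered by a bundle isomorphism, $M\mu$ is an equivalence of spectra, giving $\MU\simeq\MSU\wedge\Sigma^{-2}\C P^\infty$.

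It then remains to upgrade this to an equivalence of $\MSU$-modules. The $\MSU$-action on $\MU$ is the Thomification of the Whitney-sum map $\alpha\colon\BSU\times\BU\to\BU$ (adding an $SU$-bundle), while the free module structure on $\MSU\wedge\Sigma^{-2}\C P^\infty$ comes from $\beta\colon\BSU\times(\BSU\times\BU(1))\to\BSU\times\BU(1)$ adding to the $\BSU$-factor. These fit into a square with the horizontal maps $\mathrm{id}\times\mu$ and $\mu$, which commutes up to homotopy by associativity of Whitney sum (both composites send $(\eta,(\zeta,L))$ to $\eta\oplus\zeta\oplus L$); Thomifying this square shows that $M\mu$ intertwines the two actions. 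The step I expect to require the most care — working in the stable homotopy category without strict models — is precisely this last point: one must invoke the standard package that the Thom spectrum functor is lax monoidal for external sum and carries the Whitney-sum $H$-space structures to the stated module structures up to coherent homotopy, and that the defining colimits interact correctly with the desuspension normalising the $\Sigma^{-2}$. Granting this, $M\mu$ is an $\MSU$-linear equivalence, which proves the proposition.
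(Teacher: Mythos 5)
Your proposal is correct and takes essentially the same route as the paper: the paper's proof also Thomifies the Whitney-sum map $\BSU\times\BU(1)\to\BU\times\BU\xrightarrow{\oplus}\BU$, identifies $\Sigma^{-2}\C P^\infty$ with $\Sigma^{-2}\MU(1)$, and obtains the equivalence as the $\MSU$-module map $\MSU\wedge\Sigma^{-2}\MU(1)\xrightarrow{1\wedge i}\MSU\wedge\MU\to\MU$. The only difference is cosmetic: where you verify that $\mu$ is a homotopy equivalence by the triangular change-of-generators computation in integral cohomology plus Whitehead's theorem, the paper simply exhibits the explicit homotopy inverse $\eta\mapsto(\eta-\det\eta)\times\det\eta$.
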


\begin{proof}
Since $\mathbb C P^\infty=\MU(1)$, we have an $\MSU$-module map 
\[
  \MSU \wedge \Sigma^{-2} \mathbb C P^\infty=\MSU \wedge \Sigma^{-2} MU(1) \xrightarrow{1 \wedge i} \MSU \wedge \MU \to \MU.
\]
It is a map of Thom spectra induced by the map of spaces 
\[
  \BSU \times \BU(1) \to \BU \times \BU \xrightarrow{\oplus} \BU,
\]
This composite map is a homotopy equivalence, with the homotopy inverse given by 
\[
  \BU\to \BSU \times \BU(1),\quad \eta\mapsto (\eta - \det \eta) \times \det \eta.
\]
Hence, it induces an equivalence of the corresponding Thom spectra.
\end{proof}

Given $R$-module spectra $E,F$, denote by $[E, F]_R$ the abelian group of $R$-linear morphisms. For a free $R$-module $R \wedge X$, there is a natural isomorphism between $[R \wedge X, F]_R$ and $[X, F]$, which is defined as follows. A morphism $X \xrightarrow{f} F$ corresponds to an $R$-linear morphism $R \wedge X \xrightarrow{1 \wedge f} R \wedge F \to F$. Conversely, an $R$-linear morphism 
$R \wedge X \xrightarrow{g} F$ corresponds to a morphism $X \simeq S \wedge X \xrightarrow{e \wedge 1} R \wedge X \xrightarrow{g} F$, where $e \colon S \to R$ is the unit of~$R$. 

We have $\MU^*(\C P^{\infty})=\MU^*[[u]]$, where $\MU^*=\MU^*(\pt)$ is the cobordism ring of point, $u=\cf_1 \in \widetilde {\MU} \vphantom{MU}^2 (\C P^{\infty})$ is the canonical orientation (the universal first Conner--Floyd class) defined by the hyperplane section $\C P^{\infty-1}\subset\C P^\infty$. Elements of $\widetilde {\MU} \vphantom{MU}^* (\C P^\infty)$ are represented by power series $f(u)$ in $u$ with zero constant term. 

\begin{proposition}\label{bij}
The abelian group $[\MU, \MU]_{\MSU,\,k}$ of $\SU$-linear operations  is isomorphic to $\widetilde {\MU} \vphantom{MU}^{2-k} (\C P^\infty)$. More precisely, if $u \in \widetilde {\MU} \vphantom{MU}^2(\C P^{\infty})$ is the canonical orientation in complex cobordism, then the map
\[
  [\MU, \MU]_*\to\widetilde {\MU} \vphantom{MU}^{2-*} (\C P^\infty),\quad f\mapsto f(u),
\]
becomes an isomorphism when restricted to the subgroup $[\MU,\MU]_{\MSU,*}$ of $SU$-linear operations.
\end{proposition}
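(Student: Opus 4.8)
The plan is to reduce the statement to the free $\MSU$-module structure on $\MU$ furnished by Proposition~\ref{msu-lin}, and then to feed it into the free-module adjunction recalled just before the statement. First I would invoke Proposition~\ref{msu-lin} to write $\MU\simeq\MSU\wedge\Sigma^{-2}\C P^\infty$ as $\MSU$-modules, thereby exhibiting $\MU$ as the free $\MSU$-module on $X=\Sigma^{-2}\C P^\infty$. Applying the natural isomorphism $[\MSU\wedge X,F]_{\MSU}\cong[X,F]$ with $F=\MU$ then gives
\[
  [\MU,\MU]_{\MSU,*}\cong[\Sigma^{-2}\C P^\infty,\MU]_*.
\]
Unwinding the grading convention $[E,F]_k=[\Sigma^kE,F]$, I have $[\Sigma^{-2}\C P^\infty,\MU]_k=[\Sigma^{k-2}\C P^\infty,\MU]=\widetilde{\MU}\vphantom{MU}^{2-k}(\C P^\infty)$ by the standard identification of reduced $\MU$-cohomology with homotopy classes of maps into suspensions of $\MU$. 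This already produces the abstract isomorphism $[\MU,\MU]_{\MSU,k}\cong\widetilde{\MU}\vphantom{MU}^{2-k}(\C P^\infty)$ asserted in the first sentence.

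The substance of the ``more precisely'' clause is that this composite isomorphism is the concrete evaluation $f\mapsto f(u)$, and this is the step I expect to require the most care. Tracing the free-module adjunction, an $\SU$-linear operation $f$ corresponds to the composite
\[
  \Sigma^{-2}\C P^\infty\xrightarrow{\;e\wedge 1\;}\MSU\wedge\Sigma^{-2}\C P^\infty\xrightarrow{\;\simeq\;}\MU\xrightarrow{\;f\;}\MU,
\]
where $e\colon S\to\MSU$ is the unit. I would then analyse the map $\Sigma^{-2}\C P^\infty\to\MU$ given by the first two arrows. By the explicit form of the equivalence in Proposition~\ref{msu-lin}, namely $\MSU\wedge\Sigma^{-2}MU(1)\xrightarrow{1\wedge i}\MSU\wedge\MU\xrightarrow{\mu}\MU$, this composite equals $\mu\circ(e\wedge i)$, where $i\colon\Sigma^{-2}MU(1)=\Sigma^{-2}\C P^\infty\to\MU$ is the canonical Thom-spectrum structure map. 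The key points are that $i$ represents precisely the canonical orientation $u=\cf_1\in\widetilde{\MU}\vphantom{MU}^2(\C P^\infty)$, and that the module unit axiom $\mu\circ(e\wedge 1_{\MU})=1_{\MU}$ collapses $\mu\circ(e\wedge i)$ back to $i$. Hence the composite $\Sigma^{-2}\C P^\infty\to\MU$ preceding $f$ is $i$ itself, so the whole composite is $f\circ i$, which by definition represents $f(u)$.

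Finally I would assemble these observations: the image of $f$ under the chain of isomorphisms is the class of $f\circ i$ in $\widetilde{\MU}\vphantom{MU}^{2-k}(\C P^\infty)$, which is exactly $f(u)$; since every arrow in the chain is an isomorphism, so is $f\mapsto f(u)$ on the subgroup of $\SU$-linear operations. The only genuinely delicate issue is the naturality and well-definedness of the free-module adjunction $[\MSU\wedge X,F]_{\MSU}\cong[X,F]$ inside the homotopy category, since we do not work with a strict monoidal model of spectra; I would treat it in the homotopy sense set up at the start of the section and defer any model-categorical justification to the remark following the statement.
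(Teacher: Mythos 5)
Your proposal is correct and follows essentially the same route as the paper's proof: Proposition~\ref{msu-lin} exhibits $\MU$ as the free $\MSU$-module on $\Sigma^{-2}\C P^{\infty}$, the free-module adjunction recalled before the statement gives the abstract isomorphism, and tracing the unit identifies the composite with $f\mapsto f(u)$ via the fact that $i$ represents the canonical orientation. The only cosmetic difference is that you collapse the pre-$f$ composite $\mu\circ(e\wedge i)$ to $i$ directly by the module unit axiom, whereas the paper first uses the $\SU$-linearity of $f$ to rewrite the composite as $1\wedge f$ followed by the action before invoking the unit; both bookkeeping variants are valid.
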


\begin{proof}
We have
\[
  [\MU, \MU]_{\MSU,*}\simeq[\MSU \wedge \Sigma^{-2} \MU(1), \MU]_{\MSU,*}\simeq
  [\Sigma^{-2} \MU(1), \MU]_*=\widetilde {\MU} \vphantom{MU}^{2-*} (\C P^\infty),
\]
where the first isomorphism follows from Proposition~\ref{msu-lin}, and the second by the discussion preceding Proposition~\ref{bij}. Under these isomorphisms, an $\SU$-linear operation 
$f \colon \MU \to \Sigma^{-k} \MU$ is mapped to the composite
\[
  \Sigma^{-2} \MU(1) \simeq S\wedge \Sigma^{-2} \MU(1) \xrightarrow{e \wedge 1} \MSU \wedge   
  \Sigma^{-2} \MU(1) \xrightarrow{1 \wedge i} \MSU \wedge \MU \to \MU \xrightarrow{f} 
  \Sigma^{-k} MU
\]
Since $f$ is $\SU$-linear, we can rewrite this composite by changing the last two maps as follows:
\[
  S\wedge \Sigma^{-2} \MU(1) \xrightarrow{e \wedge 1} \MSU \wedge \Sigma^{-2} \MU(1)   
  \xrightarrow{1 \wedge i} \MSU \wedge \MU \xrightarrow{1 \wedge f} \MSU \wedge \Sigma^{-k} \MU   
  \to \Sigma^{-k} \MU
\]
Since $e$ is the unit, the composite above is $\Sigma^{-2} \MU(1) \xrightarrow{i} \MU \xrightarrow{f} \Sigma^{-k} \MU$. Now the map $\Sigma^{-2} \MU(1) \xrightarrow{i} \MU$ represents the canonical orientation $u \in \widetilde {\MU} \vphantom{MU}^2(\C P^{\infty})$, so the composite represents $f(u) \in \widetilde{\MU} \vphantom{MU}^{2-k}(\C P^{\infty})$, as claimed.
\end{proof}

\begin{remark}\label{strict}
Although we work in the stable homotopy category, the bordism spectra such as $\MU$ and $\MSU$ have strictly commutative models. More precisely, there is a symmetric monoidal category of strict $\MSU$-modules $\mathcal 
M_{\MSU}$ and its homotopy (derived) category  $\mathcal D_{\MSU}$ (see~\cite{ekmm}). The same argument as in Proposition~\ref{msu-lin} shows that there is an equivalence of strict $\MSU$-modules $\MU\cong\MSU \wedge \Sigma^{-2} \C P^{\infty}$, which together with the adjunction equivalence of~\cite[Proposition~III.4.1]{ekmm} gives an equivalence
\begin{multline*}
  \mathcal D_{\MSU}(\MU, \MU) \cong \mathcal D_{\MSU}
  (\MSU \wedge   \Sigma^{-2} \C P^{\infty}, \MU) 
  \\ \cong \mathcal D_{S}(\Sigma^{-2} \C P^{\infty}, \MU) = 
  [\Sigma^{-2} \C P^{\infty}, \MU].
\end{multline*}
Hence, the result of Proposition~\ref{bij} holds for the set of homotopy classes of strict $\MSU$-module self-maps of~$\MU$. Nevertheless, since our primary motivation lies in the study of cohomology operations that are $\SU$-linear in the sense of pairings in cohomology theories, we consider self-maps of 
$\MU$ that are $\SU$-linear only up to homotopy.
\end{remark}

By Proposition \ref{bij}, a series $f(u) \in \widetilde{\MU} \vphantom{MU}^*(\C P^{\infty})$ determines uniquely an $\SU$-linear operation~$f$.
Therefore, to complete the description of $\SU$-linear operations in complex cobordism we need to describe the $\SU$-linear operations acting on $\MU^* (\C P^\infty)$ by $u\mapsto \sum_{i \ge 0} \lambda_i u^{i+1}$, where $\sum_{i \ge 0} \lambda_i u^{i+1}\in\widetilde {\MU} \vphantom{MU}^{2-2k}(\C P^\infty)$ is an arbitrary series with $\lambda_i\in\MU^{-2i-2k}$.
The required operations were introduced by Novikov in~\cite{novi67}. We recall their definition following~\cite{c-l-p19}.

The stable Thom isomorphism
\[
  \varphi^*\colon\MU^n(\BU) \xrightarrow{\cong}\MU^n(\MU)=[\MU,\MU]_{-n}
\]
identifies universal characteristic classes with cohomological operations in complex cobordism.

Given two complex line bundles $\xi$, $\eta$ with $u=\cf_1(\xi)$, $v=\cf_1(\eta)$, the first Conner--Floyd class of their tensor product is expressed as a power series in $u,v$ with coefficients in $\MU^*$, known as the \emph{formal group law of geometric cobordisms}~\cite{novi67}, \cite{buch12}, or simply the \emph{formal group law in complex cobordism}:
\begin{equation}\label{fglcc}
  F_U(u,v)=\cf_1(\xi\otimes\eta)=u+v+\sum_{i\ge1,\,j\ge 1}\alpha_{ij}u^iv^j,\quad
  \alpha_{ij}\in \MU^{-2(i+j-1)}.
\end{equation}
We denote by $\overline u$ the inverse series of $u$ with respect to $F_U$; it satisfies $F_U(u,\overline u)=0$.

\begin{construction}[operations $\varDelta_{(k_1, k_2)}$]\label{partial_i}
Consider the universal characteristic class $\du\in \MU^2(BU)$ given by $\du(\xi)=\cf_1(\det\xi)$. We also set $\bdu=\cf_1(\overline{\det\xi})$, which can be identified with the inverse series of $\du$ and satisfies $F_U(\du,\bdu)=0$.

Given non-negative integers $k_1$, $k_2$, define the operation
\[
  \varDelta_{(k_1, k_2)} = \varphi^* \bigl((\bdu)^{k_1}(\du)^{k_2}\bigr)
  \in[\MU, \MU]_{-2(k_1+k_2)}
\]

Geometrically, 
the operation 
$\varDelta_{(k_1, k_2)}$ takes a bordism class $[M]\in\MU_*$ to the class $[M_{k_1, k_2}]$ of a submanifold dual to $(\det\mathcal TM)^{\oplus k_1} \oplus (\overline {\det\mathcal TM})^{\oplus k_2}$.

The action of $\varDelta_{(k_1, k_2)}$ on the canonical orientation 
$u \in \widetilde {\MU} \vphantom{MU}^2(\C P^{\infty})$ is given by
\begin{equation}\label{Deltau}
\varDelta_{(k_1, k_2)} u = u\, \overline u ^{k_1}u^{k_2}
\end{equation}
\end{construction}

\begin{proposition}\label{deltalin}
The operation $\varDelta_{(k_1, k_2)}$ is $\SU$-linear.
\end{proposition}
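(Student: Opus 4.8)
The plan is to verify $\SU$-linearity through the most tractable of the three equivalent conditions of Definition~\ref{lin}. By Theorem~\ref{su-linmu}, it suffices to check condition~(c), namely that the induced map $\pi_*(\varDelta_{(k_1,k_2)})$ is $\MSU_*$-linear, that is,
\[
  \varDelta_{(k_1,k_2)}(ab)=a\,\varDelta_{(k_1,k_2)}(b)
  \quad\text{for all } a\in\MSU_*,\ b\in\MU_*,
\]
where $a$ acts on $b$ through its image under the forgetful map $\MSU_*\to\MU_*$. Since the operation is additive and the module pairing is bi-additive, and since $\MSU_*$ and $\MU_*$ are generated as abelian groups by bordism classes of manifolds, it is enough to verify this identity when $a=[N]$ is represented by an $SU$-manifold $N$ and $b=[M]$ by a stably complex manifold $M$.

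The key input is the geometric description of $\varDelta_{(k_1,k_2)}$ recorded in Construction~\ref{partial_i}: it sends $[M]$ to the class of a submanifold $M_{k_1,k_2}\subset M$ dual to $(\det\mathcal TM)^{\oplus k_1}\oplus(\overline{\det\mathcal TM})^{\oplus k_2}$. First I would observe that the tangent bundle of the product splits, $\mathcal T(N\times M)\cong\mathrm{pr}_N^*\mathcal TN\oplus\mathrm{pr}_M^*\mathcal TM$, so that the determinant line bundle is multiplicative, $\det\mathcal T(N\times M)\cong\mathrm{pr}_N^*\det\mathcal TN\otimes\mathrm{pr}_M^*\det\mathcal TM$. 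The $SU$-structure on $N$ trivialises $\det\mathcal TN$ (a reduction of the stable tangent bundle to $SU$ is exactly a trivialisation of its determinant), whence $\det\mathcal T(N\times M)\cong\mathrm{pr}_M^*\det\mathcal TM$, and likewise for the conjugate bundle. Consequently the bundle $(\det\mathcal T(N\times M))^{\oplus k_1}\oplus(\overline{\det\mathcal T(N\times M)})^{\oplus k_2}$ is pulled back from~$M$, so its dual submanifold may be taken in the $M$-direction, namely $N\times M_{k_1,k_2}$. This gives $\varDelta_{(k_1,k_2)}([N\times M])=[N\times M_{k_1,k_2}]=[N]\,\varDelta_{(k_1,k_2)}([M])$, which is exactly condition~(c).

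The step I expect to require the most care is making the duality argument precise: one must check that a dual submanifold for a direct sum of pulled-back bundles on a product can genuinely be realised as the product of $N$ with a dual submanifold in $M$, compatibly with the $U$-structures, and that the trivialisation of $\det\mathcal TN$ coming from the $SU$-structure is the one governing the vanishing $\du(\mathcal TN)=\cf_1(\det\mathcal TN)=0$ in cobordism. An alternative that sidesteps these geometric technicalities is a purely characteristic-class computation: the class $(\bdu)^{k_1}(\du)^{k_2}$ is a function of $\du$ alone, hence pulled back along the determinant map $\BU\to\BU(1)$; since under the splitting $\BU\simeq\BSU\times\BU(1)$ from the proof of Proposition~\ref{msu-lin} this determinant class restricts trivially to the $\BSU$-factor, one can trace through the effect of $\varphi^*$ on a smash product to recover condition~(c). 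I would fall back on this algebraic route if the geometric realisation proves delicate.
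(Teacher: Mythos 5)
Your proposal is correct and follows essentially the same route as the paper: reduce to condition~(c) via Theorem~\ref{su-linmu}, use $\det\mathcal T(N\times M)\cong\det\mathcal TN\otimes\det\mathcal TM$ together with the triviality of $\det\mathcal TN$ for an $SU$-manifold~$N$, and conclude that the dual submanifold in the product is $N\times M_{k_1,k_2}$. The duality step you flag as delicate is handled in the paper exactly as you anticipate, by writing $\varDelta_{(k_1,k_2)}([N\times M])=\varepsilon D_U\bigl(\bigl(\cf_1(\det\mathcal T(N\times M))\bigr)^{k_1}\bigl(\cf_1(\overline{\det\mathcal T(N\times M)})\bigr)^{k_2}\bigr)$ via the Poincar\'e--Atiyah duality isomorphism and the augmentation, so your fallback characteristic-class route is not needed.
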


\begin{proof}
By Theorem~\ref{su-linmu} it is enough to check that $\varDelta_{(k_1, k_2)}$ is $SU$-linear on the homotopy groups $\pi_*(\MU)=\MU_*$. Take bordism classes $[M] \in \MU_m$, $[N] \in \MSU_n$. We need to prove that $\varDelta_{(k_1, k_2)}([N \times M])= [N] \cdot \varDelta_{(k_1, k_2)}([M])$.

By definition, 
\[
  \varDelta_{(k_1, k_2)}([N \times M])=\varepsilon D_U \bigl(\bigl(\cf_1(\det \mathcal T (N \times M)) 
  \bigr)^{k_1} \bigl (\cf_1(\overline{\det \mathcal T (N \times M)})\bigr)^{k_2}\bigr),
\]
where $D_U \colon \MU^*(N \times M) \xrightarrow{\simeq} \MU_{n+m-*} (N \times M)$ is the Poincar\'e--Atiyah duality isomorphism and $\varepsilon \colon \MU_*(N \times M) \to \MU_*(\pt)$ is the augmentation.
We have $\det \mathcal T(N\times M)=\det \mathcal T N \otimes \det \mathcal T M = \det \mathcal T M$, as $N$ is an $\SU$-manifold. Hence,
\[
  \varDelta_{(k_1, k_2)}([N \times M])=\varepsilon D_U 
  \bigl(\bigl(\cf_1(\det \mathcal T (M))\bigr)^{k_1} 
  \bigl (\cf_1(\overline{\det \mathcal T (M)})\bigr)^{k_2}\bigr).
\]
Clearly, the submanifold dual to $\bigl(\cf_1(\det \mathcal T ( M))\bigr)^{k_1} \bigl (\cf_1(\overline{\det \mathcal T (M)})\bigr)^{k_2}$ in the product $N \times M$ is $N \times \varDelta_{(k_1, k_2)}([M])$, which implies the statement.
\end{proof}

We denote 
\[
  \varDelta=\varDelta_{(1,1)}, \quad \partial_k= \varDelta_{(k,0)}, \quad 
  \partial=\partial_1, \quad \overline\partial_k=\varDelta_{(0, k)}.
\]
Note that $\partial_0$ is the identity operation. It follows from definition that each operation 
$\varDelta_{(k_1, k_2)}$ can be expressed as a series in $\partial_k$ (as well as in~$\overline \partial_k$).

\begin{lemma}\label{u^i}
Under the isomorphism of Proposition~\ref{bij}, a power series $\sum_{i \ge 0} \lambda_i u^{i+1} \in \widetilde{\MU} \vphantom{MU}^{2-2k}(\C P^{\infty})$ corresponds to the operation $\sum_{i \ge 0} \lambda_i \overline{\partial}_i$, $\lambda_i\in \MU^{-2i-2k}$.
\end{lemma}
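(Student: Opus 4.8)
The plan is to reduce the statement to a direct computation of the action of $\overline{\partial}_i$ on the canonical orientation, combined with the $\MU^*$-linearity of the isomorphism of Proposition~\ref{bij}. First I would record that $\overline{\partial}_i=\varDelta_{(0,i)}$, so that formula~\eqref{Deltau} gives at once
\[
  \overline{\partial}_i\, u = \varDelta_{(0,i)}\, u = u\,\overline u^{\,0}\,u^{i} = u^{i+1}.
\]
Thus the single operation $\overline{\partial}_i$ corresponds under Proposition~\ref{bij} to the monomial $u^{i+1}$, and by Proposition~\ref{deltalin} it is $\SU$-linear, hence lies in the domain of that isomorphism. This already handles the case of a single term.

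Next I would verify that the assignment $f\mapsto f(u)$ of Proposition~\ref{bij} is a homomorphism of left $\MU^*$-modules. The left $\MU^*$-module structure on $[\MU,\MU]_{\MSU,*}$ is given by post-composition with multiplication by $\lambda\in\MU^*$; this structure preserves $\SU$-linearity because $\MU^*$ is commutative (if $f(ab)=af(b)$ for $a\in\MSU_*$, $b\in\MU_*$, then $(\lambda f)(ab)=\lambda a f(b)=a(\lambda f)(b)$), and on the orientation it translates into $(\lambda f)(u)=\lambda\cdot f(u)$, where the product is taken in the $\MU^*$-module $\widetilde{\MU}\vphantom{MU}^*(\C P^\infty)$. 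Granting this, applying the operation $\sum_{i\ge 0}\lambda_i\overline{\partial}_i$ to $u$ yields, term by term,
\[
  \Bigl(\sum_{i\ge 0}\lambda_i\overline{\partial}_i\Bigr)(u)
  =\sum_{i\ge 0}\lambda_i\,\overline{\partial}_i\, u
  =\sum_{i\ge 0}\lambda_i u^{i+1}.
\]
Since $f\mapsto f(u)$ is an isomorphism by Proposition~\ref{bij}, the operation $\sum_{i\ge 0}\lambda_i\overline{\partial}_i$ is the \emph{unique} $\SU$-linear operation with this action on $u$, which is exactly the assertion of the lemma.

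The only point requiring care — and the step I expect to be the main obstacle — is that the infinite series $\sum_{i\ge 0}\lambda_i\overline{\partial}_i$ defines a genuine operation and that its value on $u$ may be computed termwise. Here I would appeal to the completeness of the module of operations in the relevant topology, coming from the completed tensor product $A^U\cong\MU^*\mathbin{\widehat\otimes}S$: in a fixed total degree the constraint $\lambda_i\in\MU^{-2i-2k}$ makes the degrees compatible, while $\overline{\partial}_i$ raises the filtration by $i$, so the partial sums form a convergent (Cauchy) sequence. Once convergence and the continuity of evaluation on $u$ are granted, the termwise computation above is legitimate and the proof is complete.
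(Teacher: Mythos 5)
Your proof is correct and follows essentially the same route as the paper's: the paper's entire argument is the computation $\sum\lambda_i\overline{\partial}_i(u)=\sum\lambda_i u^{i+1}$ via formula~\eqref{Deltau}, which is exactly your central step. Your additional checks — that post-composition with $\lambda\in\MU^*$ preserves $\SU$-linearity and commutes with $f\mapsto f(u)$, and that the series converges in the filtration topology of $A^U\cong\MU^*\mathbin{\widehat\otimes}S$ with termwise evaluation on $u$ — are sound and merely make explicit what the paper takes for granted.
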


\begin{proof}
Indeed, we have $\sum \lambda_i \overline \partial_i(u) = \sum \lambda_i u^{i+1}$ by formula~\eqref{Deltau}. 
\end{proof}

Now we can formulate the main result of the section.

\begin{theorem} \label{su-lin}
Any $\SU$-linear operation $f \in [\MU, \MU]_{\MSU,*}$ can be written uniquely as a series $f=\sum_{i \ge 0} \mu_i \partial_i$, where $\mu_i \in \MU^{-2i-*}$.
\end{theorem}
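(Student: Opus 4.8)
The plan is to reduce the statement to the description of $\SU$-linear operations already obtained in Proposition~\ref{bij} and Lemma~\ref{u^i}, and then to pass from the family $\{\overline{\partial}_i\}$ to the family $\{\partial_i\}$ by a triangular change of basis whose diagonal entries are units.

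First I would record what is already available. By Proposition~\ref{bij} the assignment $f \mapsto f(u)$ identifies the group $[\MU,\MU]_{\MSU,*}$ of $\SU$-linear operations with the group of power series $\sum_{i\ge 0}\lambda_i u^{i+1}$ in $\widetilde{\MU}^{2-*}(\C P^\infty)$, and by Lemma~\ref{u^i} such a series corresponds to the operation $\sum_{i\ge 0}\lambda_i\overline{\partial}_i$ with $\lambda_i\in\MU^{-2i-*}$. Hence every $\SU$-linear operation already has a unique expansion $f=\sum_i\lambda_i\overline{\partial}_i$, and the remaining task is to show that the operations $\partial_i$ can be used in place of the $\overline{\partial}_i$.

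To compare the two families I would evaluate $\partial_i$ on the canonical orientation. By~\eqref{Deltau} we have $\partial_i(u)=u\,\overline{u}^{\,i}$, and since $\overline{u}$ is the $F_U$-inverse of $u$ it starts as $\overline{u}=-u+O(u^2)$. Therefore $\partial_i(u)=(-1)^i u^{i+1}+\sum_{j>i}a_{ij}u^{j+1}$ for suitable coefficients $a_{ij}\in\MU^{2(i-j)}$, which in view of $\overline{\partial}_j(u)=u^{j+1}$ means $\partial_i=(-1)^i\overline{\partial}_i+\sum_{j>i}a_{ij}\overline{\partial}_j$. Thus the transition matrix from $\{\partial_i\}$ to $\{\overline{\partial}_i\}$ is triangular with diagonal entries $(-1)^i$, which are units in $\MU^*$; this also makes precise the remark following the definition of~$\partial_k$.

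Existence and uniqueness of the expansion $f=\sum_i\mu_i\partial_i$ then follow by inverting this triangular system degree by degree. Comparing the coefficient of $u^{n+1}$ on both sides of $\sum_i\mu_i\partial_i(u)=f(u)=\sum_n\lambda_n u^{n+1}$ yields the recursion $\mu_n=(-1)^n\bigl(\lambda_n-\sum_{i<n}\mu_i a_{in}\bigr)$, which determines the $\mu_n$ uniquely and simultaneously shows, by an induction on the internal degree, that $\mu_n\in\MU^{-2n-*}$. The only point I would be careful about is convergence: the series $\sum_i\mu_i\partial_i$ must be read in the topological (completed) sense, but this causes no difficulty, since $\partial_i(u)$ has lowest term $(-1)^i u^{i+1}$, so for each power $u^{n+1}$ only the finitely many terms with $i\le n$ contribute and the sum converges in the $u$-adic topology on $\widetilde{\MU}^*(\C P^\infty)$. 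The argument is therefore essentially a triangular base change with unit diagonal, and I do not expect any serious obstacle beyond this bookkeeping.
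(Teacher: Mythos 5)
Your proof is correct and follows essentially the same route as the paper: reduce to the expansion $f=\sum_i\lambda_i\overline{\partial}_i$ via Proposition~\ref{bij} and Lemma~\ref{u^i}, then pass to the $\partial_i$ and read off uniqueness by evaluating on the canonical orientation~$u$. The only difference is one of detail: where the paper simply remarks that each $\overline{\partial}_i$ is a series in the $\partial_i$, you make this explicit as a triangular base change with unit diagonal coming from $\overline{u}=-u+O(u^2)$, which is a faithful elaboration of the same argument.
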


\begin{proof}
It follows from Proposition~\ref{bij} and Lemma~\ref{u^i} that any $\SU$-linear operation $f$ can be represented as a series 
$\sum \lambda_i \overline{\partial}_i$.  Since $\overline{\partial}_i$ is a series in 
$\partial_i$, we can also write $f$ as a series $\sum \mu_i \partial_i$. The coefficients $\lambda_i$ and $\mu_i$ are determined uniquely by the action of $f$ on the canonical orientation $u \in \widetilde{\MU} \vphantom{MU}^2(\C P^{\infty})$. Namely,  $f(u)=\sum_{i\ge0}\lambda_iu^{i+1}=\sum_{i\ge0}\mu_iu\overline u^i$ by formula~\eqref{Deltau}.
\end{proof}

Theorem~\ref{su-lin} is readily extended to $\SU$-multilinear operations:

\begin{theorem}\label{su-polylin}
Any $\SU$-multilinear operation in complex cobordism can be written uni\-quely as a series 
$\sum \mu_{i_1, \ldots , i_k} \partial_{i_1}\cdots\partial_{i_k}$.
\end{theorem}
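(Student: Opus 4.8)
The plan is to run the entire argument leading from Proposition~\ref{msu-lin} through Theorem~\ref{su-lin} again, but with the single copy of $\C P^\infty$ replaced by its $k$-fold smash power. First I would fix conventions: by an \emph{$\SU$-multilinear operation} I mean a morphism $f\in[\MU^{\wedge k},\MU]$ out of the $k$-fold smash product that is $\MSU$-linear in each of its $k$ variables separately, and I would read the product $\partial_{i_1}\cdots\partial_{i_k}$ as the composite
\[
  \MU^{\wedge k}\xrightarrow{\,\partial_{i_1}\wedge\cdots\wedge\partial_{i_k}\,}\MU^{\wedge k}\to\MU,
\]
where the last arrow is the $k$-fold multiplication on $\MU$. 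The base case $k=1$ is exactly Theorem~\ref{su-lin}.

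Next I would generalise Proposition~\ref{bij}. Applying the equivalence $\MU\simeq\MSU\wedge\Sigma^{-2}\C P^\infty$ of Proposition~\ref{msu-lin} in each of the $k$ smash factors and then stripping off the free $\MSU$-module structure one variable at a time, using the adjunction $[\MSU\wedge X,F]_{\MSU}\cong[X,F]$ together with the symmetry of the smash product to collect the $\MSU$ factors, I expect to obtain a natural isomorphism
\[
  [\MU^{\wedge k},\MU]_{\MSU,\,*}\cong[\Sigma^{-2k}(\C P^\infty)^{\wedge k},\MU]_*=\widetilde{\MU}\vphantom{MU}^{2k-*}\bigl((\C P^\infty)^{\wedge k}\bigr),
\]
under which an operation $f$ corresponds to its value $f(u_1,\dots,u_k)$ on the external product of the canonical orientations. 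Since $\MU^*\bigl((\C P^\infty)^{\times k}\bigr)=\MU^*[[u_1,\dots,u_k]]$ by the Künneth theorem, the reduced cohomology of the smash power is the ideal of series divisible by $u_1\cdots u_k$, namely $u_1\cdots u_k\,\MU^*[[u_1,\dots,u_k]]$.

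It then remains to generalise Lemma~\ref{u^i}. By formula~\eqref{Deltau} the operation $\partial_{i_1}\cdots\partial_{i_k}$ sends $(u_1,\dots,u_k)$ to $\prod_{j=1}^k u_j\,\overline{u_j}^{\,i_j}$. In each variable separately the family $\{u\,\overline u^{\,i}\}_{i\ge0}$ is a topological basis of $u\,\MU^*[[u]]$, because $\overline u=-u+\cdots$ makes $u\,\overline u^{\,i}=\pm u^{i+1}+\cdots$ triangular with respect to $\{u^{i+1}\}$; taking the completed product over the $k$ variables shows that $\bigl\{\prod_j u_j\,\overline{u_j}^{\,i_j}\bigr\}$ is a topological basis of $u_1\cdots u_k\,\MU^*[[u_1,\dots,u_k]]$. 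Combined with the isomorphism above, this yields the unique expansion $f=\sum\mu_{i_1,\dots,i_k}\,\partial_{i_1}\cdots\partial_{i_k}$.

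The main obstacle I anticipate is the careful bookkeeping in the multilinear adjunction: one must check that removing the $\MSU$-linearity condition one variable at a time, while permuting the smash factors by symmetry, is legitimate, and that the $\MSU$-linearity conditions in the remaining variables are faithfully preserved by each adjunction (which should hold because the various $\MSU$-actions live on distinct smash factors). Alternatively, since the remark following Lemma~\ref{pmonic} extends that lemma to smash products of arbitrarily many spectra, the three notions of $\SU$-multilinearity coincide and the coefficients $\mu_{i_1,\dots,i_k}$ can equivalently be pinned down by the action on $\pi_*(\MU)$; once either route is set up, the remaining triangularity-and-completion argument is routine.
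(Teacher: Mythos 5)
Your proposal is correct and is essentially the paper's intended argument: the paper offers no written proof of Theorem~\ref{su-polylin}, stating only that Theorem~\ref{su-lin} ``is readily extended'', and your extension --- the smash-power analogue of Proposition~\ref{bij} via iterated free-module adjunctions, the identification $\widetilde{\MU}\vphantom{MU}^{*}\bigl((\C P^\infty)^{\wedge k}\bigr)\cong u_1\cdots u_k\,\MU^*[[u_1,\dots,u_k]]$, and the triangularity of $\bigl\{\prod_j u_j\overline{u_j}^{\,i_j}\bigr\}$ against the monomial basis --- is precisely the natural filling-in of those details, with the fallback via the multi-spectrum version of Lemma~\ref{pmonic} matching the paper's remark. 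Your anticipated bookkeeping concern is benign since the $k$ copies of $\MSU$ act on distinct smash factors, so each adjunction preserves linearity in the remaining variables.
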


This result will be used when we study $\SU$-bilinear multiplications in Subsection~\ref{prsect}.

The product structure (with respect to the composition) of $\SU$-linear operations $f=\sum_{i \ge 0} \beta_i \partial_i$ can be described via the coefficients of the formal group law in complex cobordism, as described next.

For an integer $k\in\Z$, the \emph{$k$-th power} $[k](u)$ in the formal group law $F_U$ is defined inductively:
\[
  [0](u)=0, \quad [k](u)=F\bigl([k-1](u), u\bigr)\text{ for } k > 0, \quad 
  [k](u) = F\bigl([k+1](u), \overline u\bigr)
  \text{ for }k < 0.
\]
For a complex line bundle $\xi$, we have $[k](\cf_1(\xi))=\cf_1(\xi^{\otimes k})$ and 
$[-k](\cf_1(\xi))=\cf_1(\overline{\xi}^{\otimes k})$, $k \ge 0$.

We can now describe the product structure of $\SU$-linear operations.

\begin{theorem}\label{comp}
Given non-negative integers $k,m$, consider the two power series
\[
  (F(u, v))^k = \sum_{i,j\ge0} \alpha_{ij}^{(k)} u^i v^j,
  \quad
 \bigl([1-m](u)\bigr)^k u^m=\sum_{i\ge0} \beta_i^{(k,m)} u^i.
\]
Then
\begin{equation}\label{dkm}
  \partial_k(a\cdot b)=\sum_{i,j} \alpha_{ij}^{(k)} \, \partial_i a \cdot \partial_j b,
  \quad
  \partial_k \partial_m = \sum_{i} \beta_{i}^{(k,m)} \partial_i,
\end{equation}
where $a,b\in[\MU,\MU]_*$ are arbitrary operations, and $a\cdot b\in[\MU\wedge\MU,\MU]_*$ denotes the exterior product.
\end{theorem}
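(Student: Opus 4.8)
The plan is to handle the two formulas separately: to prove the Cartan-type formula (the first identity) directly, and then to deduce the composition formula (the second identity) from it by evaluating on the canonical orientation. For the first formula I would begin by reducing the statement about arbitrary $a,b$ to the single universal Cartan identity
\[
  \partial_k\circ\mu=\mu\circ\Bigl(\textstyle\sum_{i,j}\alpha_{ij}^{(k)}\,\partial_i\wedge\partial_j\Bigr)
  \quad\text{in }[\MU\wedge\MU,\MU]_*,
\]
where $\mu\colon\MU\wedge\MU\to\MU$ is the multiplication. Once this is known, the general case follows by precomposing with $a\wedge b$ and using $(\partial_i\wedge\partial_j)\circ(a\wedge b)=(\partial_i a)\wedge(\partial_j b)$, whose composite with $\mu$ is the exterior product $(\partial_i a)\cdot(\partial_j b)$.

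To prove the universal identity I would invoke the injectivity of $q\colon[\MU\wedge\MU,\MU]\to\Hom(\MU_*\otimes\MU_*,\MU_*)$ from Lemma~\ref{pmonic}; its hypotheses hold for $\MU$, which is connective of finite type with torsion-free homology and homotopy. Thus it suffices to check the formula on bordism classes $[M]\otimes[N]$, where it becomes the classical Cartan formula $\partial_k([M]\times[N])=\sum_{i,j}\alpha_{ij}^{(k)}\,\partial_i[M]\cdot\partial_j[N]$. I would verify this exactly as in the proof of Proposition~\ref{deltalin}: since $\det\mathcal T(M\times N)=\det\mathcal TM\otimes\det\mathcal TN$, one has $\cf_1(\overline{\det\mathcal T(M\times N)})=F_U\bigl(\cf_1(\overline{\det\mathcal TM}),\cf_1(\overline{\det\mathcal TN})\bigr)$ (the inverse of a formal sum being the formal sum of inverses), so its $k$-th power expands as $\sum_{i,j}\alpha_{ij}^{(k)}\cf_1(\overline{\det\mathcal TM})^i\cf_1(\overline{\det\mathcal TN})^j$ by the definition of $\alpha_{ij}^{(k)}$ in~\eqref{fglcc}; applying the product-multiplicative Poincar\'e--Atiyah duality and augmentation yields the claim.

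For the second formula I would first note that both $\partial_k\partial_m$ and $\sum_i\beta_i^{(k,m)}\partial_i$ are $\SU$-linear: composites of $\SU$-linear maps are $\SU$-linear, and the right-hand side is an $\MU^*$-combination of the $\partial_i$, hence lies in the free module of Theorem~\ref{su-lin}. By Proposition~\ref{bij} two $\SU$-linear operations coincide once they agree on the canonical orientation $u$, so it remains to compute $\partial_k\partial_m(u)$. By~\eqref{Deltau} we have $\partial_m(u)=u\,\overline u^{\,m}$, so the task is to evaluate $\partial_k$ on the element $u\,\overline u^{\,m}$, which is the product of the first Conner--Floyd classes of $\gamma$ and of $m$ copies of $\gamma^{-1}$ over $\C P^\infty$. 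I would use naturality: $u\,\overline u^{\,m}$ is the pullback of the external product $v_0v_1\cdots v_m$ along the map $\C P^\infty\to(\C P^\infty)^{m+1}$ classifying $(\gamma,\gamma^{-1},\dots,\gamma^{-1})$. Applying the iterated version of the first formula together with $\partial_i(v)=v\,\overline v^{\,i}$ for each orientation $v$ gives the clean identity
\[
  \partial_k(v_0v_1\cdots v_m)=(v_0v_1\cdots v_m)\,\bigl(F_U(\overline v_0,\overline v_1,\dots,\overline v_m)\bigr)^{k},
\]
where $F_U(\overline v_0,\dots,\overline v_m)$ denotes the $(m{+}1)$-fold formal group sum. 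Pulling back (so that $v_0\mapsto u$, $v_r\mapsto\overline u$, hence $\overline v_0\mapsto\overline u$ and $\overline v_r\mapsto u$ for $r\ge1$) yields $\partial_k(u\,\overline u^{\,m})=u\,\overline u^{\,m}\bigl(F_U(\overline u,[m](u))\bigr)^{k}=u\,\overline u^{\,m}\bigl([m-1](u)\bigr)^{k}$, using $F_U(\overline u,[m](u))=[m-1](u)$. Finally I would match this with the right-hand side: $\sum_i\beta_i^{(k,m)}\partial_i(u)=u\sum_i\beta_i^{(k,m)}\overline u^{\,i}=u\,\bigl([1-m](\overline u)\bigr)^{k}\overline u^{\,m}$, and the two expressions agree because $[1-m](\overline u)=[m-1](u)$.

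I expect the main obstacle to be the evaluation of $\partial_k$ on the non-orientation class $u\,\overline u^{\,m}$: unlike an orientation it is not covered directly by~\eqref{Deltau}, and since $\partial_k$ is only $\SU$-linear rather than $\MU^*$-linear one cannot naively pull coefficients out. The device that resolves this is the reduction to a product of orientations via the diagonal-type map above, after which the iterated first formula and the elementary formal-group identities $[1-m](\overline u)=[m-1](u)$ and $F_U(\overline u,[m](u))=[m-1](u)$ finish the computation. A secondary point deserving care is confirming that the hypotheses of Lemma~\ref{pmonic} genuinely apply, so that the first formula may legitimately be checked on homotopy groups.
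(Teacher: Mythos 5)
Your proposal is correct, but your route to the second identity is genuinely different from the paper's. For the Cartan-type formula you do essentially what the paper does: reduce via the injectivity lemmata (\hr[Lemmata]{monic}, \hr{pmonic}) to a verification on bordism classes and expand the first Conner--Floyd class of the determinant of a product by the formal group law; your extra step of packaging this as a universal identity in $[\MU\wedge\MU,\MU]_*$ is implicit in the paper and is what legitimately lets you later apply the formula to arbitrary cohomology classes rather than only to operations. For the composition formula, however, the paper performs a second geometric computation: writing $\partial_m[M]=[N]$ with normal bundle $i^*(\det\mathcal TM)^{\oplus m}$, it derives $\cf_1(\det\mathcal TN)=i^*[1-m](u)$ and concludes by a push--pull over $i_*[N]=u^m\frown[M]$, so that $\partial_k[N]=\bigl\langle([1-m](u))^ku^m,[M]\bigr\rangle$ falls out directly. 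You instead deduce the second identity \emph{formally from the first}: both sides are $\SU$-linear, so by Proposition~\ref{bij} it suffices to compare values on the canonical orientation, and you evaluate $\partial_k(u\,\overline u^{\,m})$ by pulling back an external product of orientations from $(\C P^\infty)^{m+1}$, applying the iterated Cartan formula, and using the elementary identities $F_U(\overline u,[m](u))=[m-1](u)$ and $[1-m](\overline u)=[m-1](u)$. I checked the inductive step giving $\partial_k(v_0\cdots v_m)=(v_0\cdots v_m)\bigl(F_U(\overline v_0,\dots,\overline v_m)\bigr)^k$ and the final matching of coefficients; both are sound. What each approach buys: the paper's argument is self-contained (it never needs Proposition~\ref{bij} or Theorem~\ref{su-lin}) and exhibits the geometric meaning of the coefficients $\beta_i^{(k,m)}$ via the normal bundle of the dual submanifold; yours shows that the composition law is a purely formal consequence of the Cartan formula together with the classification of $\SU$-linear operations, and the computation lives entirely in $\MU^*[[u]]$.

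One small bookkeeping point: in your verification of the Cartan identity you evaluate with $\cf_1\bigl(\overline{\det\mathcal T(M\times N)}\bigr)$, whereas under the paper's conventions (Construction~\ref{partial_i} and the proof of Proposition~\ref{deltalin}) the action of $\partial_k$ on homotopy groups is $\partial_k[M]=\varepsilon D_U\bigl((\cf_1(\det\mathcal TM))^k\bigr)$, with the \emph{unconjugated} tangent determinant; as written, your computation establishes the formula for $\overline\partial_k=\varDelta_{(0,k)}$. Since $\overline{\det\mathcal T(M\times N)}=\overline{\det\mathcal TM}\otimes\overline{\det\mathcal TN}$ behaves identically under products and the coefficients $\alpha_{ij}^{(k)}$ are the same, the word-for-word computation with $\det\mathcal TM$ in place of its conjugate gives the stated formula for $\partial_k$, so this is a harmless tangent-versus-normal convention slip rather than a gap.
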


\begin{proof}
By Lemmata~\ref{monic} and~\ref{pmonic} it is enough to verify the identities on the homotopy groups of the $\MU$-spectrum, that is, on elements of
$\MU_*=\pi_*(\MU)$.

Let $a=[M], \, b=[N] \in \MU_*$. We denote $u=\cf_1(\det \mathcal T M)$,  $v = \cf_1(\det \mathcal T N)$. The first identity follows by calculation
\begin{multline*}
  \partial_k([M \times N]) 
  = \varepsilon D_U \bigl( \bigl(\cf_1(\det \mathcal T (M \times N))\bigr)^k \bigr) 
  = \varepsilon D_U \bigl( \bigl(\cf_1(\det \mathcal T M \otimes \det \mathcal T N)\bigr)^k \bigr)\\
  =\varepsilon D_U \bigl( (F(u, v))^k \bigr) 
  = \varepsilon D_U \Bigl(\sum_{i,j} \alpha^{(k)}_{ij} u^i v^j\Bigr) 
  = \sum_{i,j} \alpha^{(k)}_{ij} \partial_i ([M]) \partial_j ([N]),
\end{multline*}
where the last equality follows from the fact that the submanifold $\partial_i (M) \times \partial_j (N)$ is dual to $u^i v^j$ in the product $M \times N$.

For the second identity, let $\partial_m ([M]) = [N]$,  where $i \colon N \hookrightarrow M$ is an embedding. The normal bundle of $N$ is isomorphic to $i^* (\det \mathcal T M)^{\oplus m}$, therefore, 
\[
  \det \mathcal T N \otimes i^*(\det \mathcal T M)^{\otimes m} 
  =\det\bigl(\mathcal T N\oplus i^*(\det \mathcal T M)^{\oplus m}\bigr)
  \cong i^* \det \mathcal T M.
\]
Hence, $\det \mathcal T N = i^*(\overline{\det \mathcal T M})^{\otimes(m-1)}$. Denoting 
$u = \cf_1(\det \mathcal T M)$, we obtain $\cf_1(\det \mathcal T N) = i^* [1-m](u)$. On the other hand, the submanifold $N$ is dual to $u^m$, that is, $i_*[N] = D_U (u^m) = u^m \frown [M]$. Then the following calculation proves the second identity:
\begin{multline*}
  \partial_k[N]=\bigl\langle \bigl(\cf_1(\det \mathcal T N)\bigr)^k , [N] \bigr\rangle 
  = \bigl\langle i^* \bigl([1-m](u)\bigr)^k , [N]  \bigr \rangle 
  = \bigl\langle\bigl([1-m](u)\bigr)^k , i_* [N] \bigr\rangle
  \\=\bigl \langle \bigl([1-m](u)\bigr)^k , u^m \frown [M] \bigr\rangle = 
  \bigl\langle  \bigl([1-m](u)\bigr)^k u^m , [M] \bigr\rangle = 
  \sum_i \beta_i^{(k,m)} \partial_i ([M]).\qedhere
\end{multline*}
\end{proof}

The second relation in~\eqref{dkm} implies $\partial_k\partial=0$, as $\beta_i^{(k,1)}=0$. The identity 
$\partial_k\partial=0$ follows also from the geometric definition of~$\partial_k$.

Relations~\eqref{dkm} can be used to express an arbitrary composition of operations $f=\sum_i\lambda_i\partial_i$ as an operation of the same form.

\section{$c_1$-spherical bordism $W$, projections and multiplications}\label{Wsect}

Here we consider the $c_1$-spherical bordism theory $W$, describe $\SU$-linear multiplications on it and $\SU$-linear projections $\MU \to W$.

\subsection{Definition and the $\MSU$-module structure.}\label{Wdefsect}
The theory $W$ of $c_1$-spherical bordism is defined geometrically as follows (see~\cite[Chapter~VIII]{ston68}). We consider closed manifolds $M$ with a \emph{$c_1$-spherical structure}, which consists of
\begin{itemize}
\item[--] a stably complex structure on the tangent bundle $\mathcal T M$;

\item[--] a \emph{$\C P^1$-reduction} of the determinant bundle, that is, a map $f \colon M \to \C P^1$ and an equivalence $f^*(\eta) \cong \det \mathcal T M$, where $\eta$ is the tautological bundle over $\C P^1$.
\end{itemize}
This is a natural generalisation of an $\SU$-structure, which can be thought of as a ``$\C P^0$-reduction'', that is, a trivialisation of the determinant bundle. The corresponding bordism theory is called \emph{$c_1$-spherical bordism} and is denoted by~$W$.

As in the case of stable complex structures, a $c_1$-spherical complex structure on the stable tangent bundle is equivalent to such a structure on the stable normal bundle. There are forgetful transformations $\MSU\to W \to \MU$.

Homotopically, a $c_1$-spherical structure on a stable complex bundle $\xi \colon M \to \BU$ is defined by a choice of lifting to a map $M \to X$, where $X$ is the (homotopy) pullback:
$$
  \xymatrix{
  & X \ar[r] \ar[d] & \C P^1 \ar[d]^-i \\
  M \ar@{-->}[ur] \ar[r]^{\xi} & \BU \ar[r]^{\det} & \C P^{\infty}
  }
$$
As $\C P^{\infty}$ is a topological abelian group, we can change the pullback square above to the following:
$$
\xymatrix{
& X \ar[r] \ar[d] & \ast \ar[d] \\
M \ar@{-->}[ur] \ar[r]^-{\xi} & \BU \times \C P^1 \ar[r]^-{\det-\,i} & \C P^{\infty}
}
$$
The Thom spectrum corresponding to the map $X \to \BU$ defines the bordism theory of manifolds with a $\C P^1$-reduction of the stable normal bundle, that is, the theory~$W$. We shall also denote this spectrum by~$W$. 

\begin{remark}
In order to get a $\C P^1$-reduction of the stable tangent bundle, we need to replace the inclusion $i \colon \C P^1 \hookrightarrow \C P^{\infty}$ in the pullback squares above by $-i$. 
Replacing the basepoint inclusion $* \to \C P^{\infty}$ by the fibration $S^{\infty} \to \C P^{\infty}$ we obtain the definition given in~\cite[Chapter~8]{ston68}:
$$
  \xymatrix{
  Y \ar[r] \ar[d] & S^{\infty} \ar[d] \\
  \BU \times \C P^1 \ar[r]^-{\det+i} & \C P^{\infty}
  }
$$
\end{remark}

The spectrum $W$ has a natural $\MSU$-module structure. The forgetful morphisms $\MSU \to W \to \MU$ are $\MSU$-module maps. The following description of the $\MSU$-module $W$ is contained implicitly in the works of Atiyah~\cite{ati61}, Conner and Floyd~\cite{co-fl66} and Stong~\cite{ston68} (compare Proposition~\ref{msu-lin}).

\begin{proposition}\label{Wmsu}
There is an equivalence of $\MSU$-modules 
\[
  W \simeq \MSU \wedge \Sigma^{-2}\C P^2.
\]
Under this equivalence, the forgetful maps $\MSU\to W\to\MU$ are identified with the free $\MSU$-module maps 
\[
  \MSU=\MSU \wedge \Sigma^{-2}\C P^1\to\MSU \wedge \Sigma^{-2}\C P^2
  \to\MSU \wedge \Sigma^{-2}\C P^\infty.
\]   
\end{proposition}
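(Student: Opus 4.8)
The plan is to mimic the proof of Proposition~\ref{msu-lin} exactly, replacing the role of $\C P^\infty=\MU(1)$ by its skeleton $\C P^2$ and tracking the three spaces $\C P^1\subset\C P^2\subset\C P^\infty$ through the Thom spectrum construction. The whole statement should follow from a single homotopy equivalence of spaces over $\BU$, passed to Thom spectra and then smashed appropriately.

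First I would recall from Proposition~\ref{msu-lin} the homotopy equivalence
\[
  \BU\xrightarrow{\;\simeq\;}\BSU\times\BU(1),\quad \eta\mapsto(\eta-\det\eta)\times\det\eta,
\]
with inverse induced by $\oplus$. The idea is that under this splitting the determinant map $\det\colon\BU\to\C P^\infty=\BU(1)$ becomes projection onto the second factor, so the pullback space $X$ (respectively $Y$) defining $W$ splits compatibly. Concretely, a $\C P^k$-reduction of $\det\mathcal T M$ amounts to a $c_1$-spherical datum that lives entirely in the $\BU(1)$-factor, while the $\BSU$-factor is untouched. Thus I would argue that the classifying space for manifolds with a $\C P^2$-reduction of the stable normal bundle is equivalent, over $\BU$, to $\BSU\times(\text{pullback of }i\colon\C P^2\hookrightarrow\C P^\infty\text{ along }\id)$, the latter pullback being just $\C P^2$ itself. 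This gives a factorisation of the universal bundle as (a bundle over $\BSU$)~$\oplus$~(the tautological bundle $\eta$ restricted to~$\C P^2$).

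Next I would pass to Thom spectra. The key computation is that the Thom spectrum of the tautological line bundle $\eta$ over $\C P^k$ is $\Sigma^{-2}\C P^{k+1}$; this is the standard identification $\MU(1)=\C P^\infty$ combined with the cofibre/skeleton filtration, giving the three Thom spectra $\Sigma^{-2}\C P^1$, $\Sigma^{-2}\C P^2$, $\Sigma^{-2}\C P^\infty$ for $k=0,1,\infty$. Since the universal bundle splits as a Whitney sum over the product $\BSU\times\C P^k$, its Thom spectrum is the smash product $\MSU\wedge\Sigma^{-2}\C P^{k+1}$. Specialising to $k=1$ yields $W\simeq\MSU\wedge\Sigma^{-2}\C P^2$, and the compatibility of the skeletal inclusions $\C P^1\hookrightarrow\C P^2\hookrightarrow\C P^\infty$ with the splitting gives the identification of the forgetful maps $\MSU\to W\to\MU$ with the displayed free $\MSU$-module maps, using that $\MSU\wedge\Sigma^{-2}\C P^1=\MSU\wedge S^0=\MSU$.

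The main obstacle I anticipate is the bookkeeping needed to see that the $\C P^k$-reduction condition on $\det\mathcal T M$ interacts with the splitting $\BU\simeq\BSU\times\BU(1)$ precisely as ``untouch the $\BSU$-factor, reduce the $\BU(1)$-factor''. One has to verify that the homotopy pullback $X$ (or $Y$) defining $W$ really is equivalent over $\BU$ to the external product of $\BSU$ with the $\C P^2$-reduction of the universal line bundle, and that this equivalence respects the Thom-spectrum structure maps; this requires care with the sign conventions for $\det$ versus $-\det$ (tangent versus normal) noted in the remark preceding the proposition. Once that identification of spaces over $\BU$ is in place, the passage to Thom spectra and the recognition of the three skeletal filtration quotients are formal, since smashing with $\MSU$ and Thomification are monoidal, and the naturality in $k$ delivers the compatibility of the forgetful maps for free.
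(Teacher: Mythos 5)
Your proposal is correct and takes essentially the same route as the paper: there, the split fibration $\BSU \to \BU \xrightarrow{\det} \C P^{\infty}$ (split by Proposition~\ref{msu-lin}) is pulled back along $-i$ to give $X \simeq \BSU \times \C P^1$ with the map to $\BU$ identified as $\id \times (-i)$, and Thomification using $\C P^2 = \mathrm{Th}(\eta|_{\C P^1})$ yields $W \simeq \MSU \wedge \Sigma^{-2}\C P^2$, with the forgetful maps coming from the skeletal inclusions exactly as you describe. Only a small indexing slip: $W$ is defined by $\C P^1$-reductions (the $\C P^2$ appears only after Thomification), but your specialisation to $k=1$ handles this correctly.
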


\begin{proof}
Consider the commutative diagram
$$
\xymatrix{
\BSU \ar[r] \ar@{=}[d] & X \ar[r]\ar[d]  & \C P^1 \ar[d]^-{-i}\\
\BSU \ar[r] & \BU \ar[r]^-{\det} & \C P^{\infty}
}
$$
where the rows are fibrations and the right square is pullback. The bottom fibration is split (see Proposition~\ref{msu-lin}). Hence, the top fibration is also split, and we obtain a homotopy equivalence $X\simeq\BSU\times\C P^1$. The map $X\to\BU$ is identified with $\id\times(-i)\colon\BSU\times\C P^1\to\BSU\times\C P^\infty$. It induces an equivalence of the corresponding Thom spectra
$W \simeq \MSU \wedge \Sigma^{-2}\C P^2$, as $\C P^2$ is the Thom space of the tautological line bundle over $\C P^1$.
\end{proof}

\begin{proposition}\label{cofiber}
The spectrum $W$ is the cofibre of the multiplication $\Sigma \MSU \xrightarrow{\cdot \theta} \MSU$ by the nontrivial element $\theta \in \MSU_1\cong\Z_2$. The right arrow in the resulting cofibre sequence
\[
  \Sigma \MSU \xrightarrow{\cdot \theta} \MSU \to W
\] 
is the forgetful map.
\end{proposition}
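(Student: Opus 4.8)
Proposition~\ref{cofiber} asks to identify $W$ as the cofibre of multiplication by $\theta \in \MSU_1 \cong \Z_2$.

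The plan is to deduce this directly from the $\MSU$-module description of $W$ established in Proposition~\ref{Wmsu}, namely $W \simeq \MSU \wedge \Sigma^{-2}\C P^2$. The key observation is that the cofibre sequence we seek is obtained by smashing $\MSU$ with a cofibre sequence of spaces (or sphere-spectrum level spectra). First I would recall the standard cofibre sequence of CW-complexes
\[
  \Sigma^{-2}\C P^1 \longrightarrow \Sigma^{-2}\C P^2 \longrightarrow \Sigma^{-2}(\C P^2/\C P^1),
\]
and note that $\C P^2/\C P^1 \simeq S^4$, so $\Sigma^{-2}(\C P^2/\C P^1)\simeq S^2$ and $\Sigma^{-2}\C P^1 \simeq S^0$. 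Rotating this cofibre sequence one step to the left expresses $\Sigma^{-2}\C P^2$ as the cofibre of the attaching map $S^1 \to S^0 = \Sigma^{-2}\C P^1$, which is the (desuspended) attaching map of the top cell of~$\C P^2$.

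Next I would smash the whole sequence with $\MSU$. Since smashing with $\MSU$ preserves cofibre sequences, this yields
\[
  \Sigma\,\MSU \longrightarrow \MSU \wedge \Sigma^{-2}\C P^1 = \MSU \longrightarrow \MSU\wedge\Sigma^{-2}\C P^2 \simeq W,
\]
where I have used $\MSU \wedge S^1 \simeq \Sigma\,\MSU$ and the identification of $\MSU\wedge\Sigma^{-2}\C P^1$ with $\MSU$ from Proposition~\ref{Wmsu}. The right-hand map is, by that same proposition, precisely the forgetful map $\MSU \to W$, which gives the final assertion of the statement for free. It remains only to identify the left-hand map with multiplication by~$\theta$.

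The main obstacle is this last identification: the left map is $\MSU$-smashed with the desuspended attaching map of the top cell of $\C P^2$, an element of $\pi_1(S^0)=\pi_1^s = \Z_2$ generated by the stable Hopf map~$\eta$. So the map is multiplication by the image of $\eta$ under the unit $\pi_1^s \to \MSU_1$. I would therefore argue that this image is the nonzero element $\theta\in\MSU_1\cong\Z_2$: since $\MSU_1\cong\Z_2$ is generated by the class of $\eta$ coming through the unit map (the generator of $\pi_1(\MSU)$ is the Hurewicz-type image of the stable Hopf element, as $\MSU$ is connective with $\pi_0=\Z$ and $\pi_1=\Z_2$), the map is multiplication by $\theta$ provided it is nonzero. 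Nonvanishing follows because if the map were null the cofibre sequence would split, forcing $W \simeq \MSU \vee \Sigma^2\MSU$ as $\MSU$-modules, contradicting the known ring structure of $W_*$ (equivalently, contradicting that $\C P^2$ is not stably $S^0 \vee S^2$). Thus the composite $S^1 \to S^0 \to \MSU$ represents the nonzero $\theta$, completing the proof.
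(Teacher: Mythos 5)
Your proof is correct and takes essentially the same route as the paper: the paper likewise identifies $\theta$ with the Hopf map $S^3 \xrightarrow{\eta} S^2 = \MSU(1)$, smashes the cofibre sequence $S^3 \xrightarrow{\eta} S^2 \to \C P^2$ with $\MSU$, and invokes Proposition~\ref{Wmsu} both for the equivalence of the cofibre with $W$ and for the identification of the right-hand arrow with the forgetful map. One small caveat: your parenthetical ``equivalently, contradicting that $\C P^2$ is not stably $S^0 \vee S^2$'' is not actually an equivalent formulation, since smashing with a ring spectrum can kill the attaching map even when it is stably essential (for instance $\MU \wedge \Sigma^{-2}\C P^2 \simeq \MU \vee \Sigma^2\MU$, as $\eta$ maps to zero in $\pi_1(\MU)=0$) --- but this aside is superfluous anyway, because your main argument, that the unit image of $\eta$ generates $\MSU_1\cong\Z_2$ and hence equals $\theta$, is the standard fact the paper itself asserts.
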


\begin{proof}
The element $\theta$ is given by the Hopf map $S^3 \xrightarrow{\eta} S^2 = \MSU(1)$. The multiplication by $\theta$ is given by the map of spectra
\[
  \MSU \wedge \Sigma^{-2} S^3 
  \xrightarrow{1 \wedge \Sigma^{-2} \eta} \MSU \wedge \Sigma^{-2} S^2.
\]
The cofibre of $S^3 \xrightarrow{\eta} S^2$ is $\C P^2$. Hence, the cofibre of the map above is equivalent to $\MSU \wedge \Sigma^{-2} \C P^2$, which coincides with the spectrum $W$ by Proposition~\ref{Wmsu}.
\end{proof}

\begin{proposition}
The spectra $W$ and $\MSU$ are Bousfield equivalent, that is, $\MSU_*(X)=0$ if and only if $W_*(X)=0$, and a map $X \to Y$ induces an isomorphism 
$\MSU_*(X) \xrightarrow{\cong} \MSU_*(Y)$ if and only if it induces an isomorphism $W_*(X) \xrightarrow{\cong} W_*(Y)$.
\end{proposition}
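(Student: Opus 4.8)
The plan is to derive everything from the cofibre sequence
\[
  \Sigma\MSU \xrightarrow{\cdot\theta} \MSU \to W
\]
of Proposition~\ref{cofiber}. Smashing it with an arbitrary spectrum $X$ and passing to homotopy groups, I obtain a long exact sequence
\[
  \cdots \to \MSU_{n-1}(X) \xrightarrow{\cdot\theta} \MSU_n(X) \to W_n(X) \to \MSU_{n-2}(X) \xrightarrow{\cdot\theta} \MSU_{n-1}(X) \to \cdots,
\]
where I have used $\pi_n(\Sigma\MSU\wedge X)=\MSU_{n-1}(X)$ and the fact that the left-hand map is induced by multiplication by $\theta\in\MSU_1$. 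All of the acyclicity and periodicity information will be read off from this one sequence.

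The forward implication is immediate: if $\MSU_*(X)=0$, then the two $\MSU$-terms adjacent to $W_n(X)$ vanish, so exactness forces $W_n(X)=0$ for every $n$, that is, $W_*(X)=0$.

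For the converse I would assume $W_*(X)=0$ and examine the segment
\[
  W_{n+1}(X) \to \MSU_{n-1}(X) \xrightarrow{\cdot\theta} \MSU_n(X) \to W_n(X).
\]
With both $W$-terms zero, this shows that multiplication by $\theta$ is an isomorphism $\MSU_{n-1}(X)\xrightarrow{\cong}\MSU_n(X)$ for all $n$. This is precisely the point at which the argument could fail for a non-nilpotent class, since a $\theta$-periodic module would survive; hence the crux is the nilpotence of $\theta$: one has $\theta^3=0$ because $\MSU_3=0$ (see~\cite{novi67,ston68,c-l-p19}). As $\MSU_*(X)$ is a module over the coefficient ring $\MSU_*$, multiplication by $\theta^3$ acts as zero on it; but it is also the threefold composite of the isomorphisms $\cdot\theta$, and hence an isomorphism. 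Therefore $\MSU_*(X)=0$, which completes the acyclicity equivalence $\MSU_*(X)=0\Leftrightarrow W_*(X)=0$.

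It remains to treat the statement about maps, which I expect to follow formally. For a map $g\colon X\to Y$ with cofibre $C_g$, the long exact sequences of $X\to Y\to C_g$ in $\MSU_*$ and in $W_*$ show that $g$ induces an isomorphism on $\MSU_*$ (respectively on $W_*$) exactly when $\MSU_*(C_g)=0$ (respectively $W_*(C_g)=0$). Applying the acyclicity equivalence just established to the spectrum $C_g$ then identifies the two isomorphism conditions. The only genuinely nonformal ingredient in the whole argument is the nilpotence $\theta^3=0$, and this is the step I regard as the heart of the proof.
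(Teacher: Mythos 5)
Your proposal is correct and follows essentially the same route as the paper: the authors likewise smash the cofibre sequence $\Sigma\MSU\xrightarrow{\cdot\theta}\MSU\to W$ with $X$, read off the forward implication from exactness, deduce that $\cdot\theta$ is an isomorphism when $W_*(X)=0$ and conclude via the nilpotence $\theta^3\in\MSU_3=0$, and reduce the statement about maps to the acyclicity statement using the exact sequence of the cofibre of $X\to Y$. Your identification of $\theta^3=0$ as the sole nonformal ingredient matches the paper's framing of this as the standard Bousfield-equivalence argument for the cofibre of a nilpotent self-map.
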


\begin{proof}
This is a well-known property of the cofibre of a nilpotent map (see, for example, the same argument in \cite[Theorem 8.14]{rav84} for the case of $K$-theory). Consider the homology exact sequence of the cofibration from Proposition~\ref{cofiber}: 
\[
  \cdots \to \MSU_{*-1}(X) \xrightarrow{\cdot \theta} \MSU_*(X) \to W_*(X) \to \MSU_{*-2}(X)   
  \xrightarrow{\cdot \theta} \MSU_{*-1}(X) \to \cdots
\]
Clearly, $\MSU_*(X)=0$ implies $W_*(X)=0$. For the opposite direction, if $W_*(X)=0$, then $\MSU_{*-1}(X) \xrightarrow{\cdot \theta} \MSU_*(X)$ is an isomorphism. As $\theta^3 \in MSU_3 =0$ (see, for instance, \cite[Example 5.7]{c-l-p19}), we obtain that $\MSU_*(X)=0$.

The second assertion (about an isomorphism in homology) follows from the first one by considering the homology exact sequence of the map~$X\to Y$.
\end{proof}

\begin{proposition}
The integral homology $H_*(W)$ is concentrated in even dimensions, and there are the short exact sequences 
\[
  0 \to H_{2k}(\MSU) \to H_{2k}(W) \to H_{2k-2}(\MSU) \to 0
\] 
In particular, the homology $H_*(W)$ has no torsion.
\end{proposition}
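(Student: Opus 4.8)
The plan is to deduce everything from the cofibre sequence of Proposition~\ref{cofiber},
\[
  \Sigma\MSU \xrightarrow{\,\cdot\,\theta\,} \MSU \to W,
\]
by passing to integral homology. The essential input is the classical fact that the integral homology $H_*(\MSU)$ is torsion-free and concentrated in even degrees. This follows from the Thom isomorphism $H_*(\MSU) \cong H_*(\BSU)$, together with the fact that $H_*(\BSU)$ is dual to a polynomial algebra on generators in even degrees and hence a free abelian group concentrated in even degrees.

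First I would write down the long exact homology sequence associated with the cofibre sequence above,
\[
  \cdots \to H_n(\Sigma\MSU) \xrightarrow{(\cdot\theta)_*} H_n(\MSU) \to H_n(W) \to H_{n-1}(\Sigma\MSU) \xrightarrow{(\cdot\theta)_*} H_{n-1}(\MSU) \to \cdots,
\]
and identify $H_n(\Sigma\MSU) = H_{n-1}(\MSU)$. The crux of the argument is the observation that the map $(\cdot\theta)_*$ vanishes on integral homology. This is a parity count: since $\theta \in \MSU_1$ has odd degree, multiplication by it shifts the homological degree by one, so $(\cdot\theta)_* \colon H_{n-1}(\MSU) \to H_n(\MSU)$ could be nonzero only if both $H_{n-1}(\MSU)$ and $H_n(\MSU)$ were nonzero, which is impossible as $H_*(\MSU)$ lives in even degrees. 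Hence $(\cdot\theta)_* = 0$ in every degree.

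With $(\cdot\theta)_* = 0$, the long exact sequence breaks into short exact sequences
\[
  0 \to H_n(\MSU) \to H_n(W) \to H_{n-2}(\MSU) \to 0.
\]
For $n$ odd both outer terms vanish, giving $H_n(W) = 0$, so $H_*(W)$ is concentrated in even degrees; for $n = 2k$ this is precisely the asserted short exact sequence.

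Finally, torsion-freeness of $H_*(W)$ is immediate: in the even-degree sequence the subgroup $H_{2k}(\MSU)$ and the quotient $H_{2k-2}(\MSU)$ are both free abelian, and an extension of a torsion-free group by a torsion-free group is torsion-free. I do not expect any genuine obstacle here; the only point requiring care is the justification that $(\cdot\theta)_*$ vanishes, and once the evenness of $H_*(\MSU)$ is recorded this is a one-line parity argument.
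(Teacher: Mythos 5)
Your proof is correct and follows essentially the same route as the paper: both pass to the long exact integral homology sequence of the cofibre sequence $\Sigma\MSU \xrightarrow{\cdot\theta} \MSU \to W$ from Proposition~\ref{cofiber} and use that $H_*(\MSU)$ is torsion-free and concentrated in even degrees to split it into the stated short exact sequences. Your write-up merely makes explicit two points the paper leaves implicit --- the parity argument for the vanishing of $(\cdot\theta)_*$ and the extension argument for torsion-freeness --- which is fine.
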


\begin{proof}
Consider the integral homology exact sequence of the cofibration from Proposition~\ref{cofiber}: 
\[
  \ldots \to H_{2k-1}(\MSU) \to H_{2k}(\MSU) \to H_{2k}(W) \to H_{2k-2}
  (\MSU) \to H_{2k-1}(\MSU) \to \ldots
\]
As the homology $H_*(\MSU)$ is concentrated in even dimensions and has no torsion, the same is true for~$W$.
\end{proof}

Recall from Section~\ref{linsect} that the operation $\partial\colon\MU\to\Sigma^2\MU$ assigns to a bordism class $[M]\in\MU_*$ the bordism class of a submanifold dual to~$c_1(M)$.

\begin{proposition}\label{conn}
The composite
\[
  W\xrightarrow{\partial'} \Sigma^2\MSU\to\Sigma^2\MU
\]
of the connecting map in the cofibre sequence of Proposition~\ref{cofiber} and the forgetful map coincides with $-\partial\colon W\to\Sigma^2\MU$.
\end{proposition}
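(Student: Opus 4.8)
The plan is to show that both maps in the statement are $\MSU$-linear and then to compare them through the free $\MSU$-module structure of $W$. Write $\rho\colon W\to\MU$ for the forgetful map, so that $\partial\colon W\to\Sigma^2\MU$ means $\partial\circ\rho$. Geometrically the connecting map should send a $c_1$-spherical class $[M]$ to the $SU$-submanifold $f^{-1}(\mathit{pt})$ dual to $c_1(\mathcal T M)$ (this submanifold inherits an $SU$-structure because the $\C P^1$-reduction $f$ trivialises $\det\mathcal T M$ along $f^{-1}(\mathit{pt})$), which after forgetting to $\MU$ is $\pm\partial[M]$; I will make this rigorous homotopically. Since $W\simeq\MSU\wedge\Sigma^{-2}\C P^2$ is a free $\MSU$-module (Proposition~\ref{Wmsu}), the adjunction used before Proposition~\ref{bij} gives an isomorphism
\[
  [W,\Sigma^2\MU]_{\MSU}\;\cong\;[\Sigma^{-2}\C P^2,\Sigma^2\MU]=\widetilde{\MU}\vphantom{MU}^{4}(\C P^2),
\]
sending an $\MSU$-linear map to its restriction along the unit $\Sigma^{-2}\C P^2\xrightarrow{e\wedge 1}\MSU\wedge\Sigma^{-2}\C P^2$. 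As $\MU^*(\C P^2)=\MU^*[u]/(u^3)$ with $u$ in degree $2$ and $\MU^2=0$, the group $\widetilde{\MU}\vphantom{MU}^{4}(\C P^2)$ is infinite cyclic with generator $u^2$. Thus it suffices to compute the two restrictions as integer multiples of $u^2$ and check that they agree.

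For the linearity, $\partial=\partial_1=\varDelta_{(1,0)}$ is $\MSU$-linear by Proposition~\ref{deltalin} and $\rho$ is $\MSU$-linear, so $-\partial$ is $\MSU$-linear. On the other side, under Proposition~\ref{cofiber} the spectrum $W=\MSU\wedge\Sigma^{-2}\C P^2$ arises by smashing $\MSU$ with the cofibre sequence $\Sigma^{-2}S^3\xrightarrow{\Sigma^{-2}\eta}\Sigma^{-2}S^2\to\Sigma^{-2}\C P^2\xrightarrow{\Sigma^{-2}q}\Sigma^{-2}S^4$, where $q\colon\C P^2\to\C P^2/\C P^1=S^4$ is the pinch map. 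Hence the connecting map is $\partial'=\mathrm{id}_{\MSU}\wedge\Sigma^{-2}q$, which is manifestly $\MSU$-linear, and so is its composite with the forgetful map $\Sigma^2\MSU\to\Sigma^2\MU$.

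The two computations then run as follows. Restricting $-\partial$ along the unit, the forgetful map carries the generator $\Sigma^{-2}\C P^2$ to the restriction $u\in\widetilde{\MU}\vphantom{MU}^{2}(\C P^2)$ of the canonical orientation, so by naturality of the stable operation $\partial$ and formula~\eqref{Deltau} the restriction equals $-\partial(u)=-u\,\overline u$ evaluated in $\MU^*(\C P^2)$; since $\overline u=-u+O(u^2)$ and $u^3=0$, this gives $-u\,\overline u=u^2$. Restricting the other composite along the unit, $\partial'$ becomes $\Sigma^{-2}q$ followed by the unit of $\Sigma^2\MSU$, representing $q^*\sigma$ where $\sigma$ generates $\widetilde{\MSU}\vphantom{MSU}^{4}(S^4)\cong\Z$; applying the forgetful map and comparing with ordinary cohomology, where the pinch map sends the top generator of $S^4$ to the generator of $H^4(\C P^2)$, identifies this with $u^2$ in $\widetilde{\MU}\vphantom{MU}^{4}(\C P^2)$. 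Hence both restrictions equal $u^2$, and the two $\MSU$-linear maps coincide.

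The main obstacle is the consistent bookkeeping of signs and orientations: one must fix compatible orientations of $\C P^2$ as the cofibre of $\eta$, of the pinch map $q$, and of the canonical orientation $u$ (taking account of the $-i$ twist in the tangent $\C P^1$-reduction noted after Proposition~\ref{Wmsu}), and verify that $q^*\sigma=+u^2$ for these choices. The decisive sign — the $-1$ in the statement — comes out cleanly and unavoidably from the formal group law through $\overline u\equiv -u\pmod{u^2}$, which forces $\partial(u)=u\,\overline u\equiv -u^2\pmod{u^3}$; this is precisely why the connecting map composed with the forgetful map equals $-\partial$ rather than $+\partial$.
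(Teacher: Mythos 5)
Your proof is correct, and it takes a genuinely different route from the paper's. The paper disposes of the statement in two lines: since $H_*(W)$ is torsion-free, Lemma~\ref{monic} reduces an identity of morphisms $W\to\Sigma^2\MU$ to an identity on homotopy groups, and the coefficient computation $W_{2n}\xrightarrow{\partial'}\MSU_{2n-2}\to\MU_{2n-2}$ being $-\partial$ is then quoted from Conner--Floyd~\cite[(17.3)]{co-fl66}. You instead observe that both morphisms are $\MSU$-linear --- for the connecting map via the explicit description $\partial'=1\wedge\Sigma^{-2}q$ implicit in the proof of Proposition~\ref{cofiber}, and for $\partial$ precomposed with the forgetful map via Proposition~\ref{deltalin} --- so that by the free-module adjunction stated before Proposition~\ref{bij} (which indeed holds for homotopy module maps over $\MSU\wedge\Sigma^{-2}\C P^2$) both are detected by their restrictions along the unit in $\widetilde{\MU}\vphantom{MU}^{4}(\C P^2)\cong\Z\langle u^2\rangle$. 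Your two evaluations are correct: $-\partial u=-u\overline u\equiv u^2 \bmod u^3$ since $\overline u\equiv -u-[\C P^1]u^2$, and $q^*\sigma=u^2$ by comparison with ordinary cohomology, where $\widetilde{\MU}\vphantom{MU}^{4}(\C P^2)\to H^4(\C P^2)$ is an isomorphism. What your approach buys is self-containedness and a transparent origin for the sign (it is forced by $\overline u\equiv-u\bmod u^2$), whereas the paper's proof is shorter but outsources exactly this sign to an external geometric computation. The price is the convention bookkeeping you flag at the end: that the Puppe connecting map for the cofibre of $\eta$ is $+\,1\wedge\Sigma^{-2}q$ rather than its negative, and that $q^*\sigma=+u^2$ for the quotient orientation of $S^4=\C P^2/\C P^1$ compatible with the identifications in Proposition~\ref{Wmsu} (including the $-i$ twist). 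You assert these with a sketch rather than carry them out; since the $-1$ is the entire content of the Proposition, a final write-up should fix these orientation conventions explicitly --- but this is routine checking, not a gap in the argument.
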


\begin{proof}
In view of Lemma~\ref{monic}, it is enough to verify the claim on the homotopy groups of spectra, that is, to check that $W_{2n}\xrightarrow{\partial'} \MSU_{2n-2}\to \MU_{2n-2}$ coincides with $-\partial$. This is proved in~\cite[(17.3)]{co-fl66} (Conner and Floyd define $W_*$ as 
$\ker\varDelta$, see Proposition~\ref{kerDelta} below).
\end{proof}

Combining Proposition~\ref{cofiber} and Proposition~\ref{conn} we obtain the exact sequence
\[
\cdots \to \Sigma \MSU \xrightarrow{\cdot \theta} \MSU \to W \xrightarrow{\partial'} \Sigma^2 \MSU \xrightarrow{\cdot \theta} \Sigma \MSU \to \cdots
\]
of Conner and Floyd \cite{co-fl66}. For the homotopy groups of spectra we obtain a 5-term exact sequence~\cite[(18.1)]{co-fl66}
\[
  0\longrightarrow \MSU_{2n-1}\stackrel{\cdot \theta}\longrightarrow
  \MSU_{2n}\longrightarrow W_{2n}
  \stackrel{\partial'}\longrightarrow \MSU_{2n-2}
  \stackrel{\cdot \theta}\longrightarrow \MSU_{2n-1}\longrightarrow0.
\]

\subsection{Relation to the operation $\varDelta$}\label{Deltasect}

Recall that $W_*=\pi_*(W)$ denotes the homotopy groups (coefficients) of the spectrum~$W$.

\begin{construction}[{\cite[Chapter VIII]{ston68}}] Define a homomorphism $\pi_0 \colon \MU_* \to W_*$ sending a bordism class $[M]$ to the class of the submanifold $N \subset \C P^1 \times M$ dual to $\overline\eta \otimes \det \mathcal T M$. We have $\det \mathcal T N\cong i^*\overline\eta$, where $i$ is the embedding $N \hookrightarrow \C P^1 \times M$, so $N$ has a natural $c_1$-spherical stably complex structure.
\end{construction}

\begin{proposition}[{\cite[Chapter VIII]{ston68}}]
The composite $W_* \to \MU_* \xrightarrow{\pi_0} W_*$ is the identity map.
In particular, the image of the forgetful homomorphism $W_* \to \MU_*$ is a direct summand of~$\MU_*$.
\end{proposition}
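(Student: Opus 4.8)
The plan is to verify the identity $\pi_0\circ s=\mathrm{id}$ directly on bordism classes, where $s\colon W_*\to\MU_*$ denotes the forgetful homomorphism, by following Stong's construction when the input already carries a $c_1$-spherical structure. So let $[M]\in W_*$, so that $M$ comes with a stably complex structure together with a map $f\colon M\to\C P^1$ and an isomorphism $\det\mathcal T M\cong f^*\eta$. After forgetting the reduction and applying $\pi_0$, I must identify the submanifold $N\subset\C P^1\times M$ dual to the line bundle $L:=p_1^*\overline\eta\otimes p_2^*\det\mathcal T M\cong p_1^*\overline\eta\otimes p_2^*f^*\eta$, where $p_1,p_2$ are the two projections, and show $[N]=[M]$ in $W_*$.

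First I would identify $N$ as a graph. Writing $c\colon\C P^1\to\C P^1$ for complex conjugation, which reverses orientation and hence satisfies $c^*\overline\eta\cong\eta$, consider the map $F=(p_1,\,c\circ f\circ p_2)\colon\C P^1\times M\to\C P^1\times\C P^1$. Since $F^*\bigl(\mathrm{pr}_1^*\overline\eta\otimes\mathrm{pr}_2^*\overline\eta\bigr)\cong p_1^*\overline\eta\otimes p_2^*f^*c^*\overline\eta\cong L$, and the determinant section of $\mathrm{pr}_1^*\overline\eta\otimes\mathrm{pr}_2^*\overline\eta$ vanishes transversally exactly along the diagonal $\Delta\subset\C P^1\times\C P^1$, I may take $N=F^{-1}(\Delta)=\{(c(f(m)),m):m\in M\}=\Gamma_{c\circ f}$, the graph of $c\circ f$, which is diffeomorphic to $M$ via $p_2$. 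Next I would match the induced $c_1$-spherical stably complex structure on $N$ with the original one on $M$. Stably $\mathcal T N\cong(p_1^*\mathcal T\C P^1\oplus p_2^*\mathcal T M)|_N\ominus L|_N$; under the identification $N\cong M$ one has $p_1|_N=c\circ f$, so using $\mathcal T\C P^1\cong\overline\eta^{\otimes2}$ both $p_1^*\mathcal T\C P^1|_N\cong(c\circ f)^*\overline\eta^{\otimes2}\cong(f^*\eta)^{\otimes2}$ and $L|_N\cong(c\circ f)^*\overline\eta\otimes f^*\eta\cong(f^*\eta)^{\otimes2}$, so these complex line bundles cancel and $\mathcal T N\cong\mathcal T M$ as stably complex bundles. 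For the reduction, $\det\mathcal T N\cong i^*\overline\eta\cong(c\circ f)^*\overline\eta\cong f^*\eta\cong\det\mathcal T M$, so after the conjugation $c$ (which is exactly the passage between $\eta$ and $\overline\eta$, i.e. the $-i$ of the Remark following Proposition~\ref{Wmsu}) the $\C P^1$-reduction of $N$ is the original reduction $f$ of $M$; as any two isomorphisms of complex line bundles are homotopic, the choices do not affect the class. Hence $[N]=[M]$ in $W_*$, i.e. $\pi_0\circ s=\mathrm{id}$.

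The direct-summand statement is then formal: from $\pi_0\circ s=\mathrm{id}$ the forgetful map $s$ is a split injection, so $\MU_*=s(W_*)\oplus\ker\pi_0$, the projection onto $s(W_*)$ being $s\circ\pi_0$. The main obstacle is the structure-matching step of the previous paragraph: one must track carefully the conjugation and orientation conventions relating $\eta$, $\overline\eta$ and the sign in the pullback defining $W$, to be certain that the graph $\Gamma_{c\circ f}$ carries the original $c_1$-spherical structure rather than a conjugate or a twist of it. Once the normal and tangent line bundles are matched as above, the remaining verifications — transversality of the determinant section and the homotopy uniqueness of the bundle isomorphisms — are routine.
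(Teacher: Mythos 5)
The paper gives no proof of this proposition beyond the citation of \cite[Chapter~VIII]{ston68}, and your argument is essentially a reconstruction of Stong's geometric one: you correctly identify the submanifold dual to $p_1^*\overline\eta\otimes p_2^*\det\mathcal TM$ with the graph $\Gamma_{c\circ f}$ (the section $z_0w_1-z_1w_0$ of $\mathrm{pr}_1^*\overline\eta\otimes\mathrm{pr}_2^*\overline\eta$ does vanish transversally on the diagonal, and pulling back along $F=(p_1,c\circ f\circ p_2)$ is transverse because $F$ is the identity in the first factor), and the bundle bookkeeping $p_1^*\mathcal T\C P^1|_N\cong(f^*\eta)^{\otimes2}\cong L|_N$ and $\det\mathcal TN\cong f^*\eta$ is right (cancellation of the line-bundle summand is legitimate after adding a complement $L^\perp$ with $L\oplus L^\perp$ trivial). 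The deduction of the direct-summand statement is formal, as you say.

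The one genuine flaw sits exactly at the step you flag as the main obstacle. The assertion ``any two isomorphisms of complex line bundles are homotopic'' is false: isomorphisms $L_1\to L_2$ covering $\id_M$ form a torsor over maps $M\to\C^\times\simeq S^1$, so their homotopy classes form a torsor over $H^1(M;\Z)$, which is usually nonzero. Nor is this choice automatically harmless for bordism: in the ``$\C P^0$'' analogue, i.e.\ for $\SU$-structures, the homotopy class of the trivialisation of $\det\mathcal TM$ genuinely changes the bordism class --- this is essentially how the nonzero element $\theta\in\MSU_1\cong\Z_2$ is supported on the circle. What saves your conclusion is specific to $\C P^1$: in the paper's homotopy pullback defining $W$, the fibre of $X\to\BU$ is the homotopy fibre of $i\colon\C P^1\to\C P^{\infty}$, namely $S^3$ (the total space of the Hopf bundle), and changing the equivalence $f^*\eta\cong\det\mathcal TM$ by $u\in[M,S^1]$ changes the homotopy class of the lift $M\to X$ only through the composite $M\to S^1\to S^3$, where $S^1\to S^3$ is the fibre inclusion of the Hopf fibration, hence null-homotopic. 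So with $f$ fixed, all choices of the equivalence give the same $W$-structure, the discrepancy being absorbable into a homotopy of~$f$; substituting this for the false claim closes the gap. Alternatively, the proposition also follows from material the paper develops later: by Proposition~\ref{pi}, $\pi_0=1+\sum_{k\ge2}\alpha_{1k}\partial_k$, and each $\partial_k$ with $k\ge2$ kills any class whose determinant bundle is pulled back from $\C P^1$ (as in the proof of Proposition~\ref{kerDelta}, using \cite[Corollary~6.4]{c-l-p19}), so $\pi_0$ fixes the image of $W_*$ in $\MU_*$; combined with the injectivity of $W_*\to\MU_*$ this gives $\pi_0\circ s=\id$ --- but that route is not available at this point of the paper's exposition, whereas your direct geometric argument is.
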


\begin{corollary}
The groups $W_*$ are concentrated in even dimensions and are torsion-free.
\end{corollary}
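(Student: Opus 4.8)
The plan is to read off both assertions from the splitting just established, combined with the classical structure of the complex bordism ring. The preceding proposition produces a left inverse $\pi_0$ to the forgetful homomorphism $j\colon W_*\to\MU_*$, so $j$ is a degree-preserving split monomorphism and identifies $W_*$ with a direct summand of $\MU_*$ as graded abelian groups. Everything then follows by transporting the relevant properties of $\MU_*$ across this injection.

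First I would recall the classical computation of Milnor and Novikov that $\MU_*\cong\Z[x_1,x_2,\ldots]$ with $\deg x_i=2i$; in particular, $\MU_*$ is torsion-free and vanishes in odd degrees. The torsion-freeness of $\MU_*$ is in fact already used in the excerpt, for instance whenever Lemma~\ref{monic} or Lemma~\ref{pmonic} is applied with $F=\Sigma^k\MU$, so no new input is required on that account.

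For the even-concentration, since $j$ is injective and preserves degree, any class in $W_{2n-1}$ maps into $\MU_{2n-1}=0$ and must therefore vanish; hence $W_*$ is concentrated in even dimensions. (Equivalently, $W_{2n-1}$ is a direct summand of $\MU_{2n-1}=0$.) For torsion-freeness, $W_*$ embeds via $j$ as a subgroup of the torsion-free abelian group $\MU_*$, and any subgroup of a torsion-free group is torsion-free.

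The point to be careful about is merely the directions of the two maps and the fact that $j$ is degree-preserving, so that injectivity genuinely forces the vanishing in odd degrees. Beyond that there is no real obstacle: the corollary is a formal consequence of the splitting from the preceding proposition together with the known structure of $\MU_*$, the latter being the only external ingredient and entirely standard.
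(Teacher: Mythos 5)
Your proof is correct and is exactly the argument the paper intends: the corollary is stated without proof precisely because it follows immediately from the preceding proposition, which exhibits $W_*$ via the forgetful map as a direct summand of $\MU_*\cong\Z[x_1,x_2,\ldots]$, a graded ring that is torsion-free and concentrated in even degrees. No gaps; your care about $j$ being a degree-preserving split monomorphism is the only point that matters, and you handle it.
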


We can also view $\pi_0$ as an idempotent homomorphism of abelian groups 
$\MU_* \to \MU_*$ and refer to it as the \emph{Stong projection}.

\begin{proposition} \label{pi}
For any $a \in \MU_*$ we have
\begin{equation}\label{stong}
  \pi_0 (a) = a + \sum_{k \ge 2} \alpha_{1k} \partial_k a,
\end{equation}
where $\alpha_{1k}$ are the coefficients of the formal group law $F_U$ in complex cobordism~\eqref{fglcc}. Furthermore,
$$\partial \pi_0 = \pi_0 \partial = \partial.$$
\end{proposition}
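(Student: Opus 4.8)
The plan is to compute the bordism class $\pi_0[M]=[N]$ directly from its geometric definition, by expanding the relevant first Conner--Floyd class through the formal group law and pairing it against the fundamental class of $\C P^1\times M$. Write $t=\cf_1(\overline\eta)\in\MU^2(\C P^1)$ for the canonical generator, so that $t^2=0$ and $\langle t,[\C P^1]\rangle=1$, and let $u=\cf_1(\det\mathcal T M)$ denote the pullback from $M$. Since $N\subset\C P^1\times M$ is the submanifold dual to $\overline\eta\otimes\det\mathcal T M$, its class is $[N]=\varepsilon D_U(z)=\langle z,[\C P^1]\times[M]\rangle$ with $z=\cf_1(\overline\eta\otimes\det\mathcal T M)=F_U(t,u)$. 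Truncating the series~\eqref{fglcc} modulo $t^2=0$, I would obtain
\[
  z=u+t\Bigl(1+\sum_{j\ge1}\alpha_{1j}u^j\Bigr).
\]

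Next I would split $z$ into a sum of exterior products and use the multiplicativity of the Kronecker pairing on $\C P^1\times M$, together with $\langle 1,[\C P^1]\rangle=[\C P^1]$, $\langle t,[\C P^1]\rangle=1$, and the identity $\langle u^j,[M]\rangle=\partial_j[M]$ used in the proof of Theorem~\ref{comp}. This yields
\[
  [N]=[\C P^1]\,\partial[M]+[M]+\sum_{j\ge1}\alpha_{1j}\partial_j[M].
\]
The essential input is then the classical identity $\alpha_{11}=-[\C P^1]$, read off from Mishchenko's logarithm $\sum_{n\ge0}\frac{[\C P^n]}{n+1}u^{n+1}$ of $F_U$: it makes the $j=1$ summand cancel against the term $[\C P^1]\,\partial[M]$, leaving exactly $\pi_0(a)=a+\sum_{k\ge2}\alpha_{1k}\partial_k a$, which is~\eqref{stong}. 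I expect the bookkeeping of this cancellation, and the check that all cross-product signs are trivial (which holds because every class in sight has even degree), to be the step requiring the most care.

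For the relation $\pi_0\partial=\partial$ I would simply substitute~\eqref{stong}: $\pi_0\partial=\partial+\sum_{k\ge2}\alpha_{1k}\,\partial_k\partial$, and invoke the relation $\partial_k\partial=0$ (the vanishing $\beta^{(k,1)}_i=0$ from Theorem~\ref{comp}) to kill every correction term. For $\partial\pi_0=\partial$ the coefficients $\alpha_{1k}$ block any purely formal manipulation, since $\partial$ is only $\SU$-linear and the $\alpha_{1k}$ need not lie in $\MSU_*$; here I would argue geometrically. The submanifold $N$ carries its $c_1$-spherical structure with $\det\mathcal T N\cong i^*\overline\eta$, so $\partial[N]$ is the submanifold of $N$ dual to $i^*t$; by the projection formula $\partial[N]=\langle i^*t,[N]\rangle=\langle tz,[\C P^1]\times[M]\rangle$, and $t^2=0$ gives $tz=tu$, whence $\partial\pi_0[M]=\langle t,[\C P^1]\rangle\langle u,[M]\rangle=\partial[M]$. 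Equivalently, slicing $N$ by $i^*t$ amounts to intersecting with $\{\pt\}\times M$, on which $\overline\eta$ trivialises and $z$ restricts to $u$, recovering the submanifold of $M$ dual to $c_1(M)$.
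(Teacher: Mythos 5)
Your proposal is correct and takes essentially the same route as the paper's own proof: both expand $\cf_1(\overline\eta\otimes\det\mathcal T M)=F_U$ on $\C P^1\times M$, truncate using $t^2=0$, evaluate against the fundamental class to get $[M]+[\C P^1]\,\partial[M]+\sum_{j\ge1}\alpha_{1j}\partial_j[M]$, cancel via $\alpha_{11}=-[\C P^1]$, prove $\pi_0\partial=\partial$ by $\partial_k\partial=0$, and prove $\partial\pi_0=\partial$ geometrically through $i_*[N]=D_U(F_U)$ and $t\,F_U(t,u)=tu$. The only differences are notational and your explicit derivation of $\alpha_{11}=-[\C P^1]$ from the Mishchenko logarithm, a standard fact the paper simply quotes.
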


\begin{remark}
By Lemma~\ref{monic}, formula~\eqref{stong} extends $\pi_0$ uniquely to a cohomological operation from $[\MU, \MU]$.
\end{remark}

\begin{proof}[Proof of Proposition \ref{pi}]
Let $a = [M]$. By the definition of~$\pi_0$,
\[
  \pi_0 (a) = \varepsilon D_U(\cf_1 (\overline{\eta} \otimes \det \mathcal T M)),
\] 
where $D_U \colon \MU^2(\C P^1 \times M^n) \xrightarrow{\cong} \MU_n(\C P^1 \times M^n)$ is the Poincar\'e--Atiyah duality isomorphism and $\varepsilon \colon \MU_*(X) \to \MU_*(\pt)$ is the augmentation.

Let $u = \cf_1(\overline{\eta})$, $v = \cf_1(\det\mathcal T M) \in \MU^2(\C P^1 \times M)$. Then
\begin{multline*}
  \varepsilon D_U(\cf_1 (\overline{\eta} \otimes \det \mathcal T M))= \varepsilon D_U (F(u, v)) 
  = \varepsilon D_U(u) + \varepsilon D_U (v)+\sum_{i, j \ge 1} \alpha_{ij} \varepsilon D_U(u^i v^j)
  \\ = [M] + [\C P^1] \partial [M] + \sum_{j \ge 1} \alpha_{1j} \partial_j [M],
\end{multline*}
where we used the identities $u^2=0$, $\varepsilon D_U (u v^j) = \partial_j [M]$ and $\varepsilon D_U(v)=[\C P^1] \partial[M]$. Formula~\eqref{stong} follows by noting that $\alpha_{11} = -[\C P^1]$.

The identity $\pi_0 \partial = \partial$ is obtained by applying~\eqref{stong} to $\partial a$ and using the identity $\partial_k \partial = 0$.

It remains to prove $\partial \pi_0 = \partial$. 
Let $\pi_0[M] = [N]$. We need to show that $\partial [N] = \partial [M]$. We have $\det \mathcal T N = i^*\overline \eta$, where $i \colon N \hookrightarrow \C P^1 \times M$, and 
\[
  i_*[N] = D_U(\cf_1(\overline \eta \otimes \det(\mathcal T M))) = D_U(F_U(u, v)) = 
  F_U(u, v)\frown [M \times \C P^1].
\]
Then the required identity follows by calculation:
\begin{multline*}
  \partial [N] = \varepsilon D_U (\cf_1 (\det \mathcal T N)) = \varepsilon D_U (i^*u) 
  = \langle i^*u, [N] \rangle = \langle u, i_*[N] \rangle\\ 
  = \langle u, F_U(u, v)\frown [M \times \C P^1]) =\langle u F_U(u, v), [M \times \C P^1] \rangle 
  = \langle uv, [M \times \C P^1] \rangle = \partial [M].\qedhere
\end{multline*}
\end{proof}

\begin{remark}
Formula~\eqref{stong} implies that the projection $\pi_0$ is $\SU$-linear, which is also clear from its geometric definition.
\end{remark}

\begin{proposition}\label{kerDelta}
The image of the forgetful homomorphism $W_* \to \MU_*$ coincides with $\ker \varDelta$.
\end{proposition}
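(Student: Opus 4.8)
The plan is to prove the two inclusions $\Im(\iota_*)\subseteq\ker\varDelta$ and $\ker\varDelta\subseteq\Im(\iota_*)$ separately, where $\iota\colon W\to\MU$ is the forgetful map. The main tool will be the $\MSU$-module description $W\simeq\MSU\wedge\Sigma^{-2}\C P^2$ of Proposition~\ref{Wmsu}, together with a factorisation of $\varDelta$ through a cofibre. Throughout I identify $\MU\simeq\MSU\wedge\Sigma^{-2}\C P^\infty$ as in Proposition~\ref{msu-lin}, under which $\iota$ is induced by the inclusion $\Sigma^{-2}\C P^2\hookrightarrow\Sigma^{-2}\C P^\infty$.

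First I would treat the easy inclusion by showing $\varDelta\circ\iota=0$. Since $\iota$ and $\varDelta$ are $\MSU$-linear, so is the composite $\varDelta\circ\iota\colon W\to\Sigma^4\MU$; as $W$ is the free $\MSU$-module on $\Sigma^{-2}\C P^2$, the group of such $\MSU$-linear maps is $[\Sigma^{-2}\C P^2,\Sigma^4\MU]=\widetilde{\MU}\vphantom{MU}^6(\C P^2)$, exactly as in the discussion preceding Proposition~\ref{bij}. But $\widetilde{\MU}\vphantom{MU}^6(\C P^2)=\MU^4\{u\}\oplus\MU^2\{u^2\}=0$ because $\MU^2=\MU^4=0$. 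Hence $\varDelta\circ\iota=0$ and therefore $\Im(\iota_*)\subseteq\ker\varDelta$.

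For the reverse inclusion I would smash $\MSU$ with the cofibre sequence $\Sigma^{-2}\C P^2\to\Sigma^{-2}\C P^\infty\to\Sigma^{-2}(\C P^\infty/\C P^2)$, obtaining a cofibre sequence of spectra
\[
  W\xrightarrow{\ \iota\ }\MU\xrightarrow{\ p\ }C,\qquad C=\MSU\wedge\Sigma^{-2}(\C P^\infty/\C P^2).
\]
The vanishing $\varDelta\circ\iota=0$ lets $\varDelta$ factor as $\varDelta=\bar\varDelta\circ p$ for some $\bar\varDelta\colon C\to\Sigma^4\MU$. On homotopy, exactness gives $\Im(\iota_*)=\ker p_*$, and the Stong projection $\pi_0$ shows that $\iota_*$ is a split injection; hence the long exact sequence breaks into short exact sequences $0\to W_*\to\MU_*\xrightarrow{p_*}C_*\to0$, so that $C_*$ is torsion-free and $p_*$ is surjective. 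Since $\varDelta_*=\bar\varDelta_*\circ p_*$ and $p_*$ is onto, the desired inclusion $\ker\varDelta\subseteq\ker p_*=\Im(\iota_*)$ is equivalent to the injectivity of $\bar\varDelta_*\colon C_*\to\MU_{*-4}$.

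This injectivity is the main obstacle. As $C_*$ is torsion-free and finitely generated in each degree it embeds in $C_*\otimes\Q$, so it suffices to prove $\bar\varDelta_*\otimes\Q$ injective, and by the rational Hurewicz isomorphism $\pi_*(-)\otimes\Q\cong H_*(-;\Q)$ (valid since $S_\Q\simeq H\Q$) this is a statement about rational homology. Now $\bar\varDelta$ is the free $\MSU$-module extension of the class $\widetilde\delta\in\widetilde{\MU}\vphantom{MU}^6(\C P^\infty/\C P^2)$ lifting $\varDelta(u)=u^2\overline u=\sum_{k\ge3}c_k u^k$ (with $c_3=-1$), and on the cellular basis $\{v_k\}_{k\ge3}$ of $\widetilde H_*(\C P^\infty/\C P^2;\Q)$ it sends $v_k$ to the image of $c_k$ under the rationally injective forgetful map $\MSU_*\otimes\Q\hookrightarrow\MU_*\otimes\Q$. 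Writing $\MU_*\otimes\Q$ as the free $\MSU_*\otimes\Q$-module on the powers of the class $x_1\in\MU_2\otimes\Q$ represented by $[\C P^1]$, the key computation is that the coefficient of the top power $x_1^{k-3}$ in $c_k$ is a unit: modulo the ideal generated by the remaining polynomial generators one has $\overline u\equiv-u+\alpha_{11}u^2-\alpha_{11}^2u^3+\cdots$ with $\alpha_{11}=-[\C P^1]$ a unit multiple of $x_1$. Thus $\bar\varDelta_*\otimes\Q$ is ``upper-triangular with unit diagonal'' and hence injective, which combined with the first inclusion yields $\Im(\iota_*)=\ker\varDelta$.
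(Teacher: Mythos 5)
Your overall architecture is sound and genuinely different from the paper's: the paper proves the inclusion $\Im(W_*\to\MU_*)\subseteq\ker\varDelta$ geometrically ($\det\mathcal TM$ is pulled back from $\C P^1$ and $\eta\oplus\overline\eta$ is trivial there), and gets the converse in two lines by quoting \cite[Corollary~6.4]{c-l-p19} ($\varDelta a=0$ implies $\partial_k a=0$ for $k\ge2$) together with the Stong formula~\eqref{stong}, which then gives $\pi_0(a)=a$. Your first inclusion (vanishing of $\varDelta\circ\iota$ via $\widetilde{\MU}\vphantom{MU}^6(\C P^2)=0$) is correct, as is the reduction, via the cofibre $C$ and the splitting by $\pi_0$, to the injectivity of $\bar\varDelta_*\otimes\Q$. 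The gap is in the final ``key computation''. Your claim that, modulo the ideal $I$ generated by the polynomial generators of $\MSU_*\otimes\Q$, one has $\overline u\equiv -u+\alpha_{11}u^2-\alpha_{11}^2u^3+\cdots$ (the inverse series of the multiplicative law $u+v+\alpha_{11}uv$) is false. The quotient by $I$ is the genus $\phi\colon\MU_*\otimes\Q\to\Q[x_1]$ annihilating the image of $\MSU^{>0}_*\otimes\Q$, and the multiplicative genus does not annihilate it: the class $9[\C P^1]^2-8[\C P^2]$ generates $W_4$ (see the proof of Theorem~\ref{1p}) and lies in the image of $\MSU_4\otimes\Q$ by the five-term exact sequence, yet the multiplicative genus sends it to $9x_1^2-8x_1^2=x_1^2\ne0$. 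Quantitatively, $\phi[\C P^2]=\tfrac98x_1^2$, and using the rational $\MSU$-class $5[\C P^1]^3-8[\C P^1][\C P^2]+3[\C P^3]$ (all $c_1$-divisible Chern numbers vanish) one gets $\phi[\C P^3]=\tfrac43x_1^3$. Writing $m_k=[\C P^k]/(k+1)$, the exact $u^4$-coefficient of $\overline u$ is $-2m_3+6m_1m_2-12m_1^3$, whose image under $\phi$ is $-\tfrac{25}{24}x_1^3$, not $-x_1^3$. So your series identity fails already at order $u^4$, and the ``unit diagonal'' — the nonvanishing of \emph{all} reduced coefficients of $u^2\overline u$ — is left unproven, which is exactly where the content of the proposition sits. (It happens to be true: $\phi$ is the genus $[M^{2n}]\mapsto\frac{(x_1/2)^n}{n!}\,c_1^n[M]$, an Abel-type genus whose inverse series is $-T(x_1u/2)^2\big/\bigl((x_1/2)^2u\bigr)$ with $T$ the tree function $T(z)=\sum_{n\ge1}n^{n-1}z^n/n!$, so every coefficient is a nonzero rational multiple of a power of $x_1$ — but establishing this is a substantial extra argument, not a one-line reduction.)

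There is also a secondary unguarded step: your matrix entries are not literally the $c_k$. A basis of $\pi_*(C)\otimes\Q\cong\widetilde{\MSU}\vphantom{M}_{*+2}(\C P^\infty/\C P^2)\otimes\Q$ maps to the $\MU$-Kronecker duals $\beta_k$ only modulo lower-filtration terms with positive-degree coefficients, and since those coefficients have exactly the degree needed, the corrections can feed into the top $x_1$-coefficient; the diagonal entry is a well-defined invariant, but identifying it with $\phi(c_k)$ requires controlling the lattice $\MSU_*(\C P^\infty/\C P^2)\otimes\Q\subset\MU_*(\C P^\infty/\C P^2)\otimes\Q$, which you do not do. In short: the homotopy-theoretic frame (cofibre, factorisation $\varDelta=\bar\varDelta\circ p$, rational injectivity) is a workable alternative to the paper's argument, but as written the proof of injectivity of $\bar\varDelta_*$ rests on an incorrect formal-group computation, whereas the paper's route via \eqref{stong} and $\partial_k a=0$ avoids all of this.
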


\begin{proof}
The operation $\varDelta$ sends a bordism class $[M]$ to the class of the submanifold $[N]$ dual to $\det \mathcal T M \oplus \overline{\det \mathcal T M}$. For a $c_1$-spherical manifold~$M$, the bundle $\det \mathcal T M$ is induced from the tautological bundle $\eta$ over $\C P^1$. Since $\eta \oplus \overline \eta$ is trivial over $\C P^1$, the operation $\varDelta$ vanishes on the image of $W_*$.

Conversely, let $a \in \ker \varDelta$. By~\cite[Corollary~6.4]{c-l-p19}, $\partial_k a = 0$ for $k \geq 2$. Then~\eqref{stong} implies $\pi_0(a)=a$, so $a$ is in the image of the forgetful homomorphism $W_* \to \MU_*$.
\end{proof}

\begin{remark}
The coefficient group $W_*$ was originally introduced by Conner and Floyd~\cite{co-fl66} precisely as $\ker \varDelta$.
\end{remark}

There is the following homotopical description of the spectrum $W$.

\begin{proposition}\label{fiber}
The spectrum $W$ is the fibre of the map $\MU \xrightarrow{\varDelta} \Sigma^4 \MU$.
\end{proposition}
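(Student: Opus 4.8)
The plan is to show directly that
\[
  W\xrightarrow{\,j\,}\MU\xrightarrow{\,\varDelta\,}\Sigma^4\MU
\]
is a cofibre sequence, where $j$ is the forgetful map; since fibre and cofibre sequences of spectra coincide, this is equivalent to the assertion that $W$ is the fibre of $\varDelta$. The starting point is the $\MSU$-module description. Smashing the cofibre sequence of spaces $\C P^2\hookrightarrow\C P^\infty\to\C P^\infty/\C P^2$ with $\MSU\wedge\Sigma^{-2}(-)$ and invoking Propositions~\ref{msu-lin} and~\ref{Wmsu} produces a cofibre sequence of spectra
\[
  W\xrightarrow{\,j\,}\MU\xrightarrow{\,q\,}C,\qquad C:=\MSU\wedge\Sigma^{-2}(\C P^\infty/\C P^2),
\]
in which $j$ is indeed the forgetful map. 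It therefore remains to identify $C$ with $\Sigma^4\MU$ in such a way that $q$ becomes~$\varDelta$.

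First I would factor $\varDelta$ through $q$, for which I must check that $\varDelta\,j=0$. As $W$ is a connective spectrum of finite type with $H_*(W)$ torsion-free and $\pi_*(\Sigma^4\MU)=\MU_*$ torsion-free, Lemma~\ref{monic} reduces this to homotopy groups, where $\Im(j_*)=\ker\varDelta$ by Proposition~\ref{kerDelta}; hence $\varDelta\,j=0$. Exactness of the cofibre sequence then yields a map $\phi\colon C\to\Sigma^4\MU$ with $\phi\,q=\varDelta$, which may be taken $\MSU$-linear by working in the derived category of $\MSU$-modules (Remark~\ref{strict}). Once $\phi$ is shown to be an equivalence, the sequence $W\to\MU\xrightarrow{\varDelta}\Sigma^4\MU$ becomes a cofibre sequence and the proposition follows.

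To see that $\phi$ is an equivalence I would pass to integral homology: $C$ and $\Sigma^4\MU$ are connective spectra of finite type with torsion-free homology concentrated in even degrees, so by the Hurewicz theorem it suffices to prove that $\phi$ induces an isomorphism on $H_*(-;\Z)$. Since all groups in sight are torsion-free, Künneth gives $H_*(C)=H_*(\MSU)\otimes\widetilde H_*(\Sigma^{-2}(\C P^\infty/\C P^2))$ and $H_*(\Sigma^4\MU)=H_*(\MSU)\otimes\widetilde H_*(\Sigma^2\C P^\infty)$, each a free $H_*(\MSU)$-module with a single generator in every even degree $\ge 4$. Being $\MSU$-linear, $\phi_*$ is $H_*(\MSU)$-linear, and its action on these generators is governed by the formula $\varDelta u=u\,\overline u\,u=u^2\overline u=-u^3+O(u^4)$ from~\eqref{Deltau}. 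As this series only raises the power of $u$, the map $\phi_*$ is expected to be triangular with respect to the filtration by homological degree, with diagonal entries given by the leading coefficient $-1$, a unit; hence $\phi_*$ would be an isomorphism.

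The main obstacle is precisely this last step: turning the heuristic ``$\varDelta u$ begins with $-u^3$'' into a rigorous statement that $\phi_*$ is triangular with unit diagonal. The difficulty is that $\varDelta$ is not multiplicative, so its effect on the homology classes $\beta_k\in\widetilde H_{2k}(\C P^\infty)$ is not simply read off from $\varDelta u$; one must match the $H_*(\MSU)$-module generators of $H_*(C)$ and $H_*(\Sigma^4\MU)$ with the $\beta_k$ and verify that $\varDelta$ carries the bottom generator of $C$ to a unit multiple of the bottom generator of $\Sigma^4\MU$ modulo $\MSU$-decomposables, the coefficients $\alpha_{ij}$ contributing only strictly above the diagonal. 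Alternatively, one can argue on homotopy groups using the five-term exact sequence of Conner and Floyd, in which case the entire proof reduces to the surjectivity of $\varDelta_*\colon\MU_{2n}\to\MU_{2n-4}$; this surjectivity is then the essential point, and it is exactly what the unit leading coefficient of $\varDelta u$ is meant to secure.
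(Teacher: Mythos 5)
Your skeleton is sound and genuinely different from the paper's: you identify the cofibre of the forgetful map $j\colon W\to\MU$ as $C=\MSU\wedge\Sigma^{-2}(\C P^\infty/\C P^2)$ by smashing the cofibration of spaces with $\MSU$, and then try to identify $C$ with $\Sigma^4\MU$. The paper never identifies the cofibre at all: it defines $F$ to be the fibre of $\varDelta$, uses the right inverse $\varPsi$ of $\varDelta$ (Example~\ref{CFproj}; \cite[Lemma~4.3]{c-l-p19}) to split the homotopy long exact sequence of $F$ into short exact sequences, matches these against the short exact sequences $0\to W_*\to\MU_*\xrightarrow{\varDelta}\MU_{*-4}\to 0$ supplied by Proposition~\ref{kerDelta}, and builds the comparison map $W\to F$ by precisely your nullhomotopy step ($\varDelta j=0$ via Lemma~\ref{monic}), concluding with a $\pi_*$-isomorphism. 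Your steps up to the construction of $\phi\colon C\to\Sigma^4\MU$ with $\phi q=\varDelta$ are all correct, and your fallback reduction is also correct as far as it goes: injectivity of $j_*\colon W_*\to\MU_*$ (available from $\pi_0\,j=\mathrm{id}$ or the Conner--Floyd five-term sequence, both established before Proposition~\ref{fiber}, so there is no circularity) splits the long exact sequence of $W\to\MU\to C$ into $0\to W_n\to\MU_n\to\pi_n(C)\to 0$, whence $\pi_n(C)\cong\MU_n/\Ker\varDelta_*$ by Proposition~\ref{kerDelta}, and $\phi_*$ is injective with image $\Im\varDelta_*$; so $\phi$ is an equivalence if and only if $\varDelta_*$ is surjective on coefficients.

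The genuine gap is that neither of your two routes to ``$\phi$ is an equivalence'' is completed, and the heuristic you offer for the decisive fact is false as stated. The homology-triangularity argument is left, by your own admission, as an expectation, and the obstacle is real: $\varDelta$ is not multiplicative, and its effect on $H_*(\MU)$ is governed by the coaction, not by the series $\varDelta u$ alone. More seriously, the claim that surjectivity of $\varDelta_*\colon\MU_{2n}\to\MU_{2n-4}$ is ``exactly what the unit leading coefficient of $\varDelta u$ is meant to secure'' does not hold up: the operation $\partial=\partial_1$ satisfies $\partial u=u\overline u=-u^2+\cdots$, likewise with unit leading coefficient, yet $\partial\colon\MU_2\to\MU_0$ has image $2\Z$ (since $\MU_2=\Z\langle[\C P^1]\rangle$ and $\partial[\C P^1]=2$), so it is not surjective. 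A unit leading coefficient controls the action on $\widetilde{\MU}\vphantom{MU}^*(\C P^\infty)$, not on the coefficient ring. Surjectivity of $\varDelta_*$ requires the explicit right inverse $\varPsi$ of Conner--Floyd and Novikov, which is exactly the input the paper cites (Example~\ref{CFproj}; \cite[Lemma~4.3]{c-l-p19}). Once you supply that citation your fallback closes the proof --- but at that point it has collapsed into the paper's argument, with the extra scaffolding of $C$ that the paper's direct fibre comparison avoids.
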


\begin{proof}
Denote the fibre by $F$. We have a long exact sequence of homotopy groups 
\[
  \cdots \to \pi_{*-3}(\MU) \to \pi_*(F) \to \pi_* (\MU) \xrightarrow{\varDelta} \pi_{*-4}(\MU) 
  \to \pi_{*-1}(F) \to \cdots
\]
The operation $\varDelta$ has a right inverse (see \cite[Lemma 4.3]{c-l-p19} and Example~\ref{CFproj} below), so it is surjective. Therefore, the long exact sequence above splits: 
$$
  0 \to \pi_*(F) \to \pi_* (\MU) \xrightarrow{\varDelta} \pi_{*-4}(\MU) \to 0
$$

Proposition \ref{kerDelta} implies that there are similar short exact sequences for $W_*$:
$$
  0 \to \pi_*(W) \to \pi_* (\MU) \xrightarrow{\varDelta} \pi_{*-4}(\MU) \to 0
$$
This short exact sequence together with the fact that  $H_*(W)$ is torsion-free and Lemma~\ref{monic} implies that the composite $W \to \MU \xrightarrow{\varDelta} \Sigma^4 \MU$ is homotopic to zero. Hence, there is a morphism $W \to F$ making the following diagram commutative
$$
  \xymatrix{
  0 \ar[r] & \pi_*(W) \ar[r] \ar[d] & \pi_* (\MU) \ar[r]^{\varDelta} \ar@{=}[d] & \pi_{*-4}(\MU)  
  \ar[r] \ar@{=}[d] & 0\\
  0 \ar[r] & \pi_*(F) \ar[r] & \pi_* (\MU) \ar[r]^{\varDelta} & \pi_{*-4}(\MU) \ar[r] & 0
}
$$
It follows that the map $W\to F$ induces an isomorphism of homotopy groups, and therefore it is an equivalence of the spectra.
\end{proof}

\begin{proposition}\label{WkerD}
For any space (or spectrum) $X$, the forgetful map $W_*(X) \to \MU_*(X)$ is injective and its image coincides with $\ker \varDelta$. The same holds for $W^*(X)$.
\end{proposition}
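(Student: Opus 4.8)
The plan is to deduce both statements from the fibre sequence of Proposition~\ref{fiber} by passing to the long exact sequences in $W$- and $\MU$-(co)homology and using that $\varDelta$ is split surjective. By Proposition~\ref{fiber}, $W$ is the fibre of $\varDelta\colon\MU\to\Sigma^4\MU$, so in the stable homotopy category we have a distinguished triangle
\[
  W\xrightarrow{\iota}\MU\xrightarrow{\varDelta}\Sigma^4\MU,
\]
in which the fibre inclusion $\iota$ is precisely the forgetful map (this is how the equivalence $W\to F$ in Proposition~\ref{fiber} is set up). Smashing this triangle with $X$ (replacing a space $X$ by $\Sigma^\infty X_+$) preserves distinguished triangles, and taking homotopy groups yields the long exact sequence
\[
  \cdots\to\MU_{n-3}(X)\xrightarrow{\delta} W_n(X)\xrightarrow{\iota}\MU_n(X)\xrightarrow{\varDelta}\MU_{n-4}(X)\to\cdots
\]

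The key input is that $\varDelta\colon\MU_*(X)\to\MU_{*-4}(X)$ is surjective for every $X$. By \cite[Lemma~4.3]{c-l-p19} (see also Example~\ref{CFproj}) the operation $\varDelta$ has a right inverse $s\in[\Sigma^4\MU,\MU]$ with $\varDelta\circ s\simeq\mathrm{id}$. Smashing $s$ with $X$ gives a one-sided inverse to $\varDelta$ on $\MU_*(X)$, so $\varDelta$ is (split) surjective there for all $X$. Consequently the connecting maps $\delta$ vanish and the long exact sequence breaks into short exact sequences
\[
  0\to W_n(X)\xrightarrow{\iota}\MU_n(X)\xrightarrow{\varDelta}\MU_{n-4}(X)\to 0 .
\]
Exactness immediately gives that the forgetful map $\iota$ is injective and that its image equals $\ker\bigl(\varDelta\colon\MU_n(X)\to\MU_{n-4}(X)\bigr)$, which is the first assertion.

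For $W^*(X)$ I would run the dual argument. Applying the graded functor $[X,-]$ to the same fibre sequence produces the long exact cohomology sequence with the forgetful map $W^*(X)\to\MU^*(X)$ and with $\varDelta\colon\MU^n(X)\to\MU^{n+4}(X)$; post-composition with $s$ furnishes a right inverse, so this $\varDelta$ is again split surjective for every $X$. The sequence therefore collapses to the short exact sequences
\[
  0\to W^n(X)\to\MU^n(X)\xrightarrow{\varDelta}\MU^{n+4}(X)\to 0,
\]
giving injectivity of $W^*(X)\to\MU^*(X)$ and image $\ker\varDelta$. Working directly with the fibre sequence (rather than a Milnor sequence presentation of $W^*(X)$) has the advantage of avoiding $\varprojlim^1$-complications for infinite $X$.

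I do not expect a genuine obstacle here: once the triangle of Proposition~\ref{fiber} and the right inverse of $\varDelta$ are in hand, the result is a formal consequence of exactness and splitting. The only points needing care are bookkeeping ones, namely confirming that the fibre inclusion is literally the forgetful transformation and that the identification of the image is with $\ker\varDelta$ \emph{on the nose} (both follow from the construction in Proposition~\ref{fiber} and from exactness), and checking the degree indexing so that the relevant $\varDelta$ in the adjacent term of the sequence is the one known to be surjective.
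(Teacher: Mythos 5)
Your proof is correct and takes essentially the same route as the paper: both deduce the statement from the fibre sequence of Proposition~\ref{fiber}, using the right inverse of $\varDelta$ (from \cite[Lemma~4.3]{c-l-p19}, cf.\ Example~\ref{CFproj}) to kill the connecting maps and split the long exact sequence into short exact sequences $0 \to W_*(X) \to \MU_*(X) \xrightarrow{\varDelta} \MU_{*-4}(X) \to 0$, and dually for $W^*(X)$. Your extra bookkeeping --- checking that the fibre inclusion is the forgetful map, the degree indexing, and the absence of $\varprojlim^1$ issues in cohomology --- merely makes explicit what the paper's terse proof leaves implicit.
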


\begin{proof}
By Proposition~\ref{fiber} we obtain a long exact sequence
$$\cdots \to W_*(X) \to \MU_*(X) \xrightarrow{\varDelta} \MU_{*-4}(X) \to \cdots$$
Since $\varDelta$ has a right inverse, this long exact sequence splits into short ones
\[
  \qquad\qquad0 \to W_*(X) \to \MU_*(X) \xrightarrow{\varDelta} 
  \MU_{*-4}(X) \to 0.\qquad\qquad\qedhere
\]  
\end{proof}

Propositions~\ref{kerDelta} and~\ref{WkerD} were proved in~\cite[Chapter VIII]{ston68} by geometric methods.

\begin{remark} Proposition \ref{fiber} also implies that the Stong projection $\pi_0 \in [\MU, \MU]$ lifts uniquely to an operation $\pi_0 \in [\MU, W]$.
\end{remark}

\subsection{$\SU$-linear projections onto $W$ and $\SU$-linear multiplications}\label{prsect}
A morphism $\MU\to W$ is called a \emph{projection onto $W$} if it is identical on~$W$, where $W$ is viewed as a submodule of~$\MU$ via the forgetful morphism~$W\to\MU$. We often view such projections as idempotent morphisms $\MU\to\MU$ with image~$W$. An example is given by the Stong projection $\pi_0\colon\MU\to W$.

Any such projection maps $\MU_*(X)$ onto the direct summand $W_*(X) = \ker \varDelta$, and similarly for $W^*(X)$. Moreover, Proposition~\ref{fiber} implies that any projection $\MU\to W$ defines a splitting of the complex cobordism spectrum $\MU \simeq W \vee \Sigma^4 \MU$, and the fibre sequence from Proposition \ref{fiber} also splits.

The Stong projection is $SU$-linear, and therefore can be written as a series in~$\partial_k$. The coefficients of this series are given by~\eqref{pi}. 
More generally, we have

\begin{proposition}\label{su-pr}
Any $\SU$-linear projection $\MU\to W$  has the form $\pi = 1 + \sum_{i \ge 2} \lambda_i \partial_i$ with $\lambda_i\in \MU^{-2i}$.
\end{proposition}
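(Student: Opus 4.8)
The plan is to combine Theorem~\ref{su-lin} with the defining property of a projection. Since $\pi$ is an $\SU$-linear operation of degree~$0$, Theorem~\ref{su-lin} lets me write it uniquely as a series $\pi=\sum_{i\ge0}\mu_i\partial_i$ with $\mu_i\in\MU^{-2i}$; in particular $\mu_0\in\MU^0=\Z$ and $\mu_1\in\MU^{-2}\cong\Z$. The whole statement then reduces to forcing the two lowest coefficients to be $\mu_0=1$ and $\mu_1=0$, after which the coefficients $\mu_i$ for $i\ge2$ are renamed $\lambda_i$ and the claimed form follows. Note that the higher $\lambda_i$ are left unconstrained by this particular claim, which asserts only the shape of~$\pi$.

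To pin down $\mu_0$ and $\mu_1$ I would use that, by Proposition~\ref{kerDelta}, the image of the forgetful map $W_*\to\MU_*$ is $\ker\varDelta$, and that $\pi$ restricts to the identity there, so $\pi(a)=a$ for every $a\in\ker\varDelta$. I test this on two classes that lie in $\ker\varDelta$ for degree reasons. First, evaluate on $[\pt]\in\MU_0$: since $\partial_i[\pt]\in\MU_{-2i}=0$ for all $i\ge1$, the series collapses to $\pi[\pt]=\mu_0[\pt]$, and $\pi[\pt]=[\pt]$ forces $\mu_0=1$. Second, evaluate on $[\C P^1]\in\MU_2$, which lies in $\ker\varDelta$ because $\varDelta[\C P^1]\in\MU_{-2}=0$: here $\partial_i[\C P^1]\in\MU_{2-2i}=0$ for $i\ge2$, while $\partial_1[\C P^1]=\partial[\C P^1]$ is the class dual to $c_1(\C P^1)$, equal to $2[\pt]$. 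Thus $\pi[\C P^1]=\mu_0[\C P^1]+2\mu_1$, and comparing with $\pi[\C P^1]=[\C P^1]$ (using $\mu_0=1$) gives $2\mu_1=0$, hence $\mu_1=0$ since $\MU_2\cong\Z$ is torsion-free.

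I do not anticipate a serious computational obstacle; the points needing care are conceptual. I must justify that evaluating $\pi=\sum_i\mu_i\partial_i$ on the homotopy classes $[\pt]$ and $[\C P^1]$ is legitimate — the action on $\pi_*(\MU)=\MU_*$ is computed term by term from the geometric description in Construction~\ref{partial_i} — and I should observe that applying the identity-on-$W$ condition to just these two generators already suffices, so no further constraints on the higher $\lambda_i$ are required for the present claim. The crux is the nonvanishing of $\partial[\C P^1]$: it is precisely this that excludes a nonzero $\mu_1$, so I would record $\langle c_1(\C P^1),[\C P^1]\rangle=2$ explicitly, ensuring that the final cancellation $2\mu_1=0\Rightarrow\mu_1=0$ rests only on the torsion-freeness of $\MU_*$.
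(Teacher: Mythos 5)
Your proof is correct and follows essentially the same route as the paper: both write $\pi=\sum_{i\ge0}\mu_i\partial_i$ via Theorem~\ref{su-lin} and then evaluate on $1$ and $[\C P^1]$, using $\partial[\C P^1]=2$ and $\partial_i[\C P^1]=0$ for $i\ge2$ to force $\mu_0=1$, $\mu_1=0$. Your extra care in checking via Proposition~\ref{kerDelta} that these two classes lie in the image of $W_*$ (by degree reasons) only makes explicit what the paper asserts directly by noting $[\C P^1]\in W_2$.
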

\begin{proof}
We write $\pi=\sum_{i\ge0}\lambda_i\partial_i$ according to Theorem~\ref{su-lin}. Then $\pi(1)=1$ and $\pi([\C P^1])=[\C P^1]$, because $[\C P^1]\in W_2$. Since $\partial[\C P^1]=2$ and $\partial_i[\C P^1]=0$ for $i\ge2$, we obtain $\lambda_0=1$ and $\lambda_1=0$, as needed.
\end{proof}

\begin{theorem}\label{pr1}
Let $\pi \colon \MU \to W$ be a projection onto $W$. Then any other projection $\MU \to W$ has the form $\pi(1+f \varDelta)$ for some operation $f \in [\MU, \Sigma^{-4} \MU]$. Furthermore, if $\pi$ is $\SU$-linear, then any other $\SU$-linear projection has the form $\pi(1+f \varDelta)$ with $SU$-linear~$f$.
\end{theorem}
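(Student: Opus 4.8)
The plan is to reduce everything to the (co)fibre sequence $W \xrightarrow{j} \MU \xrightarrow{\varDelta} \Sigma^4\MU$ of Proposition~\ref{fiber}, where $j$ denotes the forgetful map. First I would treat arbitrary projections. Given two projections $\pi,\pi'\colon\MU\to W$, set $h=\pi'-\pi\in[\MU,W]$. Since both are identical on $W$, we have $\pi j=\pi' j=\id_W$, hence $hj=0$. Applying the functor $[-,W]$ to the cofibre sequence above yields an exact sequence $[\Sigma^4\MU,W]\xrightarrow{\varDelta^*}[\MU,W]\xrightarrow{j^*}[W,W]$, in which $j^*(h)=hj=0$ forces $h$ to lie in the image of $\varDelta^*$; thus $h=g\varDelta$ for some $g\in[\Sigma^4\MU,W]$. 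Putting $f=jg\in[\Sigma^4\MU,\MU]=[\MU,\Sigma^{-4}\MU]$, the identity $\pi(1+f\varDelta)=\pi+\pi jg\varDelta=\pi+g\varDelta=\pi+h=\pi'$ (using $\pi j=\id_W$) gives the required form, proving the first assertion.

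For the refinement, suppose $\pi,\pi'$ are $\SU$-linear, so that $h=\pi'-\pi$ is $\SU$-linear as well. I would first observe that the map $g$ above is \emph{uniquely} determined by $h$: since $\varDelta$ admits a right inverse $s$ (see Example~\ref{CFproj}), the relation $g\varDelta=h$ gives $g=hs$, and any two solutions agree. It therefore suffices to prove that this unique $g$ is $\SU$-linear; then $f=jg$ is $\SU$-linear because the forgetful map $j$ is an $\MSU$-module map and compositions of $\SU$-linear maps are $\SU$-linear. To verify the $\SU$-linearity of $g$ I would pass to homotopy groups and use the criterion of Theorem~\ref{su-linmu} (that is, Lemma~\ref{r-lin}) with $R=\MSU$, $E=\Sigma^4\MU$ and $F=W$; the torsion-freeness hypotheses hold since $H_*(\MSU)$, $H_*(\MU)$ and $\pi_*(W)=W_*$ are all torsion-free.

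Concretely, for $r\in\MSU_*$ and $x\in\pi_*(\Sigma^4\MU)$ I would choose, using surjectivity of $\varDelta_*$ (a consequence of $\varDelta s=\id$), an element $y\in\MU_*$ with $\varDelta_*(y)=x$. Then $g_*(x)=(g\varDelta)_*(y)=h_*(y)$, while the $\SU$-linearity of $\varDelta$ (Proposition~\ref{deltalin}) gives $r\cdot x=r\cdot\varDelta_*(y)=\varDelta_*(r\cdot y)$, so $g_*(r\cdot x)=h_*(r\cdot y)=r\cdot h_*(y)=r\cdot g_*(x)$ by the $\SU$-linearity of $h$. Thus $\pi_*(g)$ is $\MSU_*$-linear, and Lemma~\ref{r-lin} upgrades this to $\SU$-linearity of $g$. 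The main obstacle is precisely this last step: the factorisation $h=g\varDelta$ is automatic only in the stable homotopy category, and the real content is to show that the resulting operation $g$ can be taken $\SU$-linear. The device that makes this work is the combination of the uniqueness of $g$ (coming from the right inverse of $\varDelta$) with the homotopy-group detection of $\SU$-linearity, which lets me conclude that $g$ is $\SU$-linear without explicitly constructing an $\SU$-linear splitting of $\varDelta$.
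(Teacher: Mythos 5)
Your proof is correct and follows essentially the same route as the paper: both use the (split) fibre sequence of Proposition~\ref{fiber} to write the difference of two projections as $g\varDelta$ with $g\in[\Sigma^4\MU,W]$, rewrite it as $\pi f\varDelta$ with $f=jg$ using $\pi j=\id_W$, and invoke the right inverse of $\varDelta$ for the $\SU$-linear refinement. Your explicit homotopy-group verification that $g$ is $\SU$-linear (via Lemma~\ref{r-lin}, the $\SU$-linearity of $\varDelta$ from Proposition~\ref{deltalin}, and the surjectivity of $\varDelta_*$) simply spells out the step the paper compresses into the remark that, since $\varDelta$ has a right inverse, $f'$ is $\SU$-linear.
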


\begin{proof}
The (split) fibre sequence of Proposition~\ref{fiber} gives the exact sequence 
\[
  \cdots \gets [\Sigma^3 \MU, W] \gets [W, W] \gets [\MU, W] \gets [\Sigma^4 \MU, W] \gets \cdots
\]
A projection $\MU \to W$ is an element of $[\MU, W]$ that maps to the identity in $[W, W]$. Such a projection exists because $[\Sigma^3 \MU, W] = 0$ (the homotopy groups $W_*$ are concentrated in even dimensions). Furthermore, any two projections $\MU \to W$ differ by the image of an element from $[\Sigma^4 \MU, W]$. That is, any projection has the form $\pi + g \varDelta$, where $g \in [\Sigma^4 \MU, W]$. It remains to note that any $g \in [\Sigma^4 \MU, W]$ can be written as $\pi f$ for $f \in [\Sigma^4 \MU, \MU]$. This proves the first statement.

Now suppose $\pi$ is an $SU$-linear projection. Then $SU$-linear operations $f$ give $SU$-linear projections $\pi(1+f \varDelta)$. Conversely, if a projection $\pi (1 + f \varDelta)$ is $\SU$-linear, then the operation $\pi f \varDelta$ is also $SU$-linear. Denote by $f' \in [\Sigma^4 \MU, \MU]$ the composite  of $\pi f \in [\Sigma^4 \MU, W]$ and the forgetful map $W \to \MU$. Then the operation $f' \varDelta$ is $\SU$-linear. Since $\varDelta$ has a right inverse, $f'$ is also $SU$-linear. Now $\pi f' \varDelta = \pi f \varDelta$, so we can replace $f$ in $\pi (1 + f \varDelta)$ by an $SU$-linear operation~$f'$.
\end{proof}

\begin{lemma}\label{gDelta}
The following three groups of $\SU$-linear operations coincide:
\begin{itemize}
\item[(1)] $\SU$-linear operations vanishing on~$W$;

\item[(2)] operations of the form $g \varDelta$ with $\SU$-linear $g$;

\item[(3)] operations $\sum_{i \ge 2} \lambda_i \partial_i$, $\lambda_i\in MU_*$.
\end{itemize}
\end{lemma}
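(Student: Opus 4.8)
The plan is to prove the chain of inclusions $(3)\subseteq(1)$, $(1)\subseteq(3)$, $(2)\subseteq(1)$ and $(3)\subseteq(2)$, which together force the three sets to coincide. Two of these are immediate. For $(3)\subseteq(1)$: each $\partial_i$ is $\SU$-linear (it belongs to the $\partial$-basis of Theorem~\ref{su-lin}, or apply Proposition~\ref{deltalin}), hence so is every series $\sum_{i\ge2}\lambda_i\partial_i$; and for $a\in W_*=\ker\varDelta$ we have $\partial_i a=0$ for $i\ge2$ by \cite[Corollary~6.4]{c-l-p19}, exactly as in the proof of Proposition~\ref{kerDelta}, so such a series vanishes on $W$. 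For $(2)\subseteq(1)$: if $g$ is $\SU$-linear then $g\varDelta$ is a composite of $\SU$-linear operations, hence $\SU$-linear, and it vanishes on $W$ because $\varDelta$ does, $W=\ker\varDelta$ by Proposition~\ref{kerDelta}.

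The inclusion $(1)\subseteq(3)$ is proved exactly as Proposition~\ref{su-pr}. Writing an $\SU$-linear $f$ vanishing on $W$ as $f=\sum_{i\ge0}\mu_i\partial_i$ (Theorem~\ref{su-lin}), I evaluate on the two classes $1\in W_0$ and $[\C P^1]\in W_2$, both of which lie in $\ker\varDelta$ for dimension reasons. Since $\partial_i 1=0$ for $i\ge1$, I get $f(1)=\mu_0=0$; and since $\partial[\C P^1]=2$ while $\partial_i[\C P^1]=0$ for $i\ge2$, I get $f([\C P^1])=2\mu_1=0$, whence $\mu_1=0$ because $\MU^*$ is torsion-free. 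Thus $f=\sum_{i\ge2}\mu_i\partial_i\in(3)$.

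The substantive step is $(3)\subseteq(2)$: realising every $\sum_{i\ge2}\mu_i\partial_i$ as $g\varDelta$ with $\SU$-linear $g$. The key input is a composition formula for $\partial_k\varDelta$, computed in the homology framework of Theorem~\ref{comp}, where an $\SU$-linear operation acting on $[M]$ is expanded using $\partial_i[M]=\langle w^i,[M]\rangle$ with $w=\cf_1(\det\mathcal TM)$. Writing $\overline w=\cf_1(\overline{\det\mathcal TM})$, the operation $\varDelta$ corresponds to the series $w\overline w=\sum_{j\ge2}\gamma_j w^j$ with $\gamma_2=-1$ (as $\overline w=-w+\cdots$). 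A normal-bundle computation as in Theorem~\ref{comp}, using that the submanifold $N$ dual to $\det\mathcal TM\oplus\overline{\det\mathcal TM}$ has $\det\mathcal TN=i^*\det\mathcal TM$ (because $\det\mathcal TM\otimes\overline{\det\mathcal TM}$ is trivial), yields $\partial_k\varDelta=\sum_{j\ge2}\gamma_j\partial_{k+j}$. Hence for $g=\sum_{k\ge0}\nu_k\partial_k$ one finds $g\varDelta=\sum_{i\ge2}\bigl(\sum_{k=0}^{i-2}\nu_k\gamma_{i-k}\bigr)\partial_i$, so the requirement $g\varDelta=\sum_{i\ge2}\mu_i\partial_i$ becomes the triangular system $\mu_i=\sum_{k=0}^{i-2}\nu_k\gamma_{i-k}$ with diagonal entries $\gamma_2=-1$.

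I expect this last step to be the main obstacle, both in pinning down the formula $\partial_k\varDelta=\sum_{j\ge2}\gamma_j\partial_{k+j}$ and in verifying that the triangular system is solvable over the completed $\MU^*$-module of operations. The latter hinges on the leading coefficient $\gamma_2=-1$ being a unit: one then solves recursively, $\nu_{i-2}=-\mu_i+\sum_{k=0}^{i-3}\nu_k\gamma_{i-k}$, so each $\nu_k$ is determined by finitely many of the $\mu_i$ and the series $g=\sum_k\nu_k\partial_k$ is a well-defined $\SU$-linear operation with $g\varDelta=\sum_{i\ge2}\mu_i\partial_i$. Combining the four inclusions gives $(1)=(2)=(3)$.
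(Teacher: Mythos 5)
Your proof is correct, but it reaches the conclusion by a genuinely different route on the one substantive inclusion. The paper never needs a composition formula: it proves $(1)\subseteq(2)$ homotopically, using Proposition~\ref{fiber} ($W$ is the fibre of $\varDelta\colon\MU\to\Sigma^4\MU$, and the fibre sequence splits), so that exactness of $[\Sigma^4\MU,\MU]_*\xrightarrow{\circ\varDelta}[\MU,\MU]_*\to[W,\MU]_*$ immediately factors any operation vanishing on $W$ as $g\varDelta$, with $\SU$-linearity of $g$ extracted from the existence of a right inverse to $\varDelta$; combined with $(1)\subseteq(3)$ this closes the circle of inclusions. You instead prove $(3)\subseteq(2)$ constructively, which is logically just as good. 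Your key formula $\partial_k\varDelta=\sum_{j\ge2}\gamma_j\partial_{k+j}$, where $w\,\overline w=\sum_{j\ge2}\gamma_j w^j$ and $\gamma_2=-1$, is correct, and your derivation (the dual submanifold $N$ has normal bundle $i^*(\det\mathcal TM\oplus\overline{\det\mathcal TM})$, hence $\det\mathcal TN=i^*\det\mathcal TM$, and $i_*[N]=(w\overline w)\frown[M]$, so $\partial_k\varDelta[M]=\langle w^k\cdot w\overline w,[M]\rangle=\sum_{j\ge2}\gamma_j\partial_{k+j}[M]$) runs exactly parallel to the paper's proof of Theorem~\ref{comp} and is legitimately promoted from homotopy groups to operations by Lemma~\ref{monic}. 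It is worth stressing that this formula could not be obtained by substituting the expansion $\varDelta=\sum_{j\ge2}\gamma_j\partial_j$ into the second identity of Theorem~\ref{comp}, because composition is only left $\MU^*$-linear ($\partial_k(\gamma_j x)\ne\gamma_j\,\partial_k x$ in general), so your direct geometric computation is genuinely needed. The remaining points are sound: the interchange $\bigl(\sum_k\nu_k\partial_k\bigr)\varDelta=\sum_k\nu_k(\partial_k\varDelta)$ holds since left multiplication by $\nu_k$ commutes with precomposition and on each bordism class only finitely many terms act nontrivially, and the triangular system with unit diagonal $\gamma_2=-1$ solves recursively as you describe, the resulting series defining an $\SU$-linear operation by Theorem~\ref{su-lin}. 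As for trade-offs: the paper's argument is shorter and yields the stronger fact that \emph{every} operation vanishing on $W$, $\SU$-linear or not, has the form $g\varDelta$; yours is more explicit, exhibiting $g$ algorithmically from the coefficients $\mu_i$ and bypassing the fibre-sequence machinery for this direction (you still invoke Proposition~\ref{kerDelta} for $(2)\subseteq(1)$, where the paper quotes Proposition~\ref{fiber} instead).
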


\begin{proof}
Any operation $g \varDelta$ vanishes on $W$ by Proposition~\ref{fiber}. On the other hand, $\sum_{i \ge 2} \lambda_i \partial_i$ vanishes on $W$ by~\cite[Corollary 6.4]{c-l-p19}.

Conversely, if an operation vanishes on $W$, then it has the form $g \varDelta$ by Proposition~\ref{fiber}. If $g \varDelta$ is $\SU$-linear, then $g$ is also $\SU$-linear, as $\varDelta$ has a right inverse.

Finally, by Theorem \ref{su-lin}, any $\SU$-linear operation has the form $\sum_{i \geq 0} \lambda_i \partial_i$. If it vanishes on $W$, then calculating at $1 \in \pi_0(W)$ and $[\C P^1] \in \pi_2(W)$ we obtain $\lambda_0=\lambda_1=0$.
\end{proof}

\begin{theorem}\label{pr2}
Any projection $\MU\to W$ has the form $1-f \varDelta$, where $f$ is an arbitrary operation satisfying 
$\varDelta f =1$. Furthermore, different projections correspond to different~$f$, and $\SU$-linear projections correspond to $\SU$-linear $f$.
\end{theorem}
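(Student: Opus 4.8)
The plan is to exhibit $f\mapsto 1-f\varDelta$ as a bijection from the set of operations $f\in[\MU,\Sigma^{-4}\MU]$ satisfying $\varDelta f=1$ onto the set of projections $\MU\to W$, and to check that it matches up the $\SU$-linear elements on both sides. Throughout I work in the graded ring $A^U=[\MU,\MU]_*$ under composition, in which $\varDelta$ and $f$ are homogeneous of complementary degrees, so that $\varDelta f$ and $f\varDelta$ are honest degree-zero self-maps comparable with the unit $1$. For sufficiency I would show that $1-f\varDelta$ is a projection onto $W$ whenever $\varDelta f=1$: the relation $\varDelta f=1$ gives $f\varDelta f\varDelta=f(\varDelta f)\varDelta=f\varDelta$, so $(1-f\varDelta)^2=1-2f\varDelta+f\varDelta=1-f\varDelta$ is idempotent, while $\varDelta(1-f\varDelta)=\varDelta-(\varDelta f)\varDelta=0$ shows its image lies in $W=\Ker\varDelta$ (Proposition~\ref{kerDelta}). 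Since $f\varDelta$ vanishes on $W$, the operation restricts to the identity there; as $W\to\MU$ is a split monomorphism by the splitting of Proposition~\ref{fiber}, this idempotent is the self-map underlying a genuine projection $\MU\to W$.

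Conversely, given any projection $\pi$, the operation $1-\pi$ vanishes on $W$ because $\pi$ is the identity there. The split fibre sequence of Proposition~\ref{fiber} then lets me factor $1-\pi=f\varDelta$ for some $f\in[\MU,\Sigma^{-4}\MU]$, since any self-map killing the $W$-summand factors through the projection $\MU\to\Sigma^4\MU$, which is $\varDelta$. As $\pi$ takes values in $W=\Ker\varDelta$ we have $\varDelta\pi=0$, hence $\varDelta=\varDelta(1-\pi)=(\varDelta f)\varDelta$; multiplying on the right by a right inverse $r$ of $\varDelta$ (which exists by Proposition~\ref{fiber}) yields $\varDelta f=(\varDelta f)(\varDelta r)=\varDelta r=1$. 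This proves that every projection has the asserted form.

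For injectivity, if $1-f_1\varDelta=1-f_2\varDelta$ then $f_1\varDelta=f_2\varDelta$, and the relations $\varDelta f_1=\varDelta f_2=1$ give, by associativity in $A^U$, the chain $f_1=f_1(\varDelta f_1)=(f_1\varDelta)f_1=(f_2\varDelta)f_1=f_2(\varDelta f_1)=f_2$, so distinct $f$ yield distinct projections. For the $\SU$-linear refinement, if $f$ is $\SU$-linear then $f\varDelta$ is $\SU$-linear because $\varDelta$ is (Proposition~\ref{deltalin}), hence $1-f\varDelta$ is $\SU$-linear. Conversely, if $\pi=1-f\varDelta$ is $\SU$-linear then $f\varDelta=1-\pi$ is $\SU$-linear, and since $\varDelta$ is $\SU$-linear and $\varDelta_*$ is surjective, a check on $\MU_*$ — writing $x=\varDelta_*(y)$, moving an $\MSU_*$-scalar across $\varDelta$, and applying the $\SU$-linearity of $f\varDelta$ — shows $f$ satisfies condition~(c) of Definition~\ref{lin}, hence is $\SU$-linear by Theorem~\ref{su-linmu}.

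I expect the main obstacle to be the converse (necessity) step: one must genuinely use the splitting of Proposition~\ref{fiber} — not merely the vanishing of $1-\pi$ on homotopy groups — to obtain the factorisation $1-\pi=f\varDelta$, and then right-cancel $\varDelta$ via its right inverse to promote $(\varDelta f)\varDelta=\varDelta$ to $\varDelta f=1$. The $\SU$-linear descent, although a routine diagram chase, rests on the same two properties of $\varDelta$, namely being $\SU$-linear and a split epimorphism, so these are the facts on which the whole argument turns.
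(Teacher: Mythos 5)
Your proposal is correct and takes essentially the same route as the paper: both obtain $1-\pi=f\varDelta$ from the split fibre sequence of Proposition~\ref{fiber}, promote $\varDelta f\varDelta=\varDelta$ to $\varDelta f=1$ by right-cancelling with a right inverse of $\varDelta$, and descend $\SU$-linearity from $f\varDelta$ to $f$ using that $\varDelta$ is $\SU$-linear and surjective on homotopy groups. The only (harmless) deviation is your algebraic cancellation $f_1=(f_1\varDelta)f_1=(f_2\varDelta)f_1=f_2$ for injectivity, where the paper instead reads it off the short exact sequence $0\gets[W,\MU]\gets[\MU,\MU]\gets[\Sigma^4\MU,\MU]\gets0$.
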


\begin{proof}
The split fibration sequence of Proposition~\ref{fiber} gives rise to a short exact sequence
\[
  0 \gets [W, \MU] \gets [\MU, \MU] \gets [\Sigma^4 \MU, \MU] \gets 0.
\]

Let $p\in[\MU, \MU]$ be a projection onto~$W$. Since it is identical on $W$, it maps to the forgetful morphism $W \to \MU$. The identity $1\in [\MU, \MU]$ also maps to the forgetful morphism, so we obtain 
$1-p= f \varDelta$ for some $f\in [\Sigma^4 \MU, \MU]$, and different $f$ correspond to different~$p$. Hence, $p=1-f\varDelta$. It is a projection onto $W$ if and only if $\varDelta (1-f \varDelta) = 0$. 
As $\varDelta$ has a right inverse, we obtain $1-\varDelta f = 0$. Conversely, the latter condition implies that $\varDelta (1-f \varDelta) = 0$.
The existence of a right inverse for $\varDelta$ also implies that $p$ is $\SU$-linear if and only if so is~$f$.
\end{proof}

An $SU$-linear projection $\pi \colon \MU \to W$ defines an $\SU$-bilinear multiplication on $W$ by the formula 
\begin{equation}\label{multpi}
  W \wedge W \to \MU \wedge \MU \xrightarrow{m_{\MU}} \MU \xrightarrow{\pi} W.
\end{equation}
As $\pi$ is a projection, this multiplication has a unit, obtained from the unit of 
$\MSU$ by the forgetful morphism.

Given elements $a,b\in W_*$, we denote by $ab$ the product of their images in 
$\MU_*$ under the forgetful morphism. 

\begin{proposition}\label{multpr}
The multiplication~\eqref{multpi} corresponding to an $\SU$-linear projection 
$\pi=1+\sum_{i \ge 2} \lambda_i \partial_i$ is given by
\[
  a*b=ab+2\lambda_2 \partial a \partial b,
\]
This formula can be understood as an identity of the operations from $[W\wedge W, W]_*$, or as an identity for cohomology classes $a,b\in[E,W]_*$, where~$E$ is an arbitrary spectrum. 

In particular, the multiplication defined by the Stong projection $\pi_0=1 + \sum_{k \ge 2} \alpha_{1k} \partial_k$ is given by
\[
  a*b = ab + 2[V]\partial a \partial b.
\]
where $[V]=\alpha_{12}\in \MU_4$ is the cobordism class $[\C P^1]^2 - [\C P^2]$.
\end{proposition}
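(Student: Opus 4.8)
<br>

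The goal is to prove that the $\SU$-linear projection $\pi = 1 + \sum_{i \ge 2} \lambda_i \partial_i$ defines the multiplication $a * b = ab + 2\lambda_2 \partial a \, \partial b$, and then specialize to the Stong projection. The plan is to verify the formula on homotopy groups, relying on the now-familiar principle (Lemmata~\ref{monic} and~\ref{pmonic}) that an identity of operations $[W \wedge W, W]_*$ is detected on the torsion-free homotopy groups. So I would first reduce to computing $\pi(ab)$ for bordism classes $a, b \in W_*$, where $ab$ denotes the product of the forgetful images in $\MU_*$.

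By definition of the multiplication~\eqref{multpi}, $a * b = \pi(ab)$, so applying the series for $\pi$ gives
\[
  a * b = ab + \sum_{i \ge 2} \lambda_i \partial_i(ab).
\]
The key input is the first Cartan-type relation from Theorem~\ref{comp}, namely $\partial_i(ab) = \sum_{p, q} \alpha_{pq}^{(i)} \partial_p a \, \partial_q b$. The crucial simplification comes from the fact that $a, b \in W_* = \ker \varDelta$: by~\cite[Corollary~6.4]{c-l-p19} (used already in the proof of Proposition~\ref{kerDelta}), we have $\partial_p a = 0$ and $\partial_q b = 0$ for $p, q \ge 2$. Hence in the double sum only the terms with $p, q \in \{0, 1\}$ survive, leaving $\partial_0 a = a$, $\partial_1 a = \partial a$ and likewise for $b$. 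I would therefore need to identify which coefficients $\alpha_{pq}^{(i)}$ — for $(p,q) \in \{(0,0),(0,1),(1,0),(1,1)\}$ — are nonzero, by examining the low-order terms of $(F_U(u,v))^i$. Since $F_U(u,v) = u + v + \alpha_{11} uv + \cdots$ has no constant or pure linear cross terms beyond $u + v$, the expansion of its $i$-th power for $i \ge 2$ contributes nothing of bidegree $(0,0)$, $(0,1)$ or $(1,0)$; the only surviving low term is the coefficient of $uv$, which is nonzero precisely when $i = 2$, where it equals $2$ (the coefficient of $uv$ in $(u+v)^2$). This yields $\partial_i(ab) = 0$ for $i \ge 3$ on $\ker\varDelta$, and $\partial_2(ab) = 2 \, \partial a \, \partial b$, giving $a * b = ab + 2\lambda_2 \partial a \, \partial b$ as claimed.

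The main obstacle is the careful bookkeeping of the bidegrees in $(F_U(u,v))^i$ restricted to $\ker\varDelta$: I must confirm that no term of bidegree $(1,1)$ with $i \ge 3$ can contribute (it cannot, since matching total degree forces the remaining factors to be constants, impossible for the reduced series $F_U - u - v$), and that the $i=2$ contribution is exactly the $uv$-coefficient $2$. Once this is settled, the specialization to the Stong projection is immediate: there $\lambda_2 = \alpha_{12}$, so $2\lambda_2 = 2\alpha_{12}$, and I would recall from Proposition~\ref{pi} and the standard identification of the formal group law coefficients that $\alpha_{12} = [\C P^1]^2 - [\C P^2] =: [V] \in \MU_4$, yielding $a * b = ab + 2[V]\partial a \, \partial b$. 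Finally, I would remark that the passage from the homotopy-group identity to the stated identity of operations in $[W \wedge W, W]_*$, and to classes $a, b \in [E, W]_*$ for arbitrary $E$, follows from the injectivity results of Lemmata~\ref{monic} and~\ref{pmonic}, exactly as in the proof of Theorem~\ref{comp}.
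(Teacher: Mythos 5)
Your proposal is correct and takes essentially the same approach as the paper: reduce to the homotopy groups $W_*$ via the injectivity results of Lemmata~\ref{monic} and~\ref{pmonic}, expand $a*b=\pi(ab)$ using the Cartan-type formula of Theorem~\ref{comp}, discard all terms involving $\partial_p$ with $p\ge2$ by \cite[Corollary~6.4]{c-l-p19}, and identify the sole surviving coefficient $\alpha^{(2)}_{11}=2$, with the $i\ge3$ terms vanishing for degree reasons. The paper's proof is simply a terser version of this computation; your explicit bidegree bookkeeping showing $\alpha^{(i)}_{11}=0$ for $i\ge3$ spells out a step the paper leaves implicit.
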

\begin{proof}
It is enough to verify the identity on the elements of $W_*=[S,W]_*$. We use the formula from Theorem~\ref{comp} and the fact that $\partial_i$ vanishes on 
$W_*$ for $i \ge 2$ (see \cite[Corollary~6.4]{c-l-p19}):
\[
  a*b=\pi(ab)=ab+\lambda_2\partial_2(ab)+\sum_{i\ge3}\lambda_i\partial_i(ab)=
  ab+\lambda_2\alpha^{(2)}_{11}\partial a\partial b=ab+2\lambda_2\partial a\partial b.\qedhere
\]
\end{proof}

\begin{lemma}[{see~\cite[Lemma~6.5]{c-l-p19}}]\label{Deltaab}
For any elements $a,b\in W_*$,
\begin{gather*}
  \partial (ab) = a \, \partial b + \partial a\,b - [\C P^1] \partial a \, \partial b,\\
  \varDelta(a b)=-2\partial a\,\partial b.
\end{gather*}
\end{lemma}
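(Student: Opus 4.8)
The plan is to verify both identities on bordism classes, representing $a=[M]$ and $b=[N]$ by $c_1$-spherical manifolds, so that $ab=[M\times N]$ after applying the forgetful map to $\MU_*$. The two main ingredients are a Cartan-type product formula in the spirit of Theorem~\ref{comp} and the vanishing $\partial_i a=\partial_i b=0$ for $i\ge2$, which holds because $a,b\in W_*=\ker\varDelta$ by \cite[Corollary~6.4]{c-l-p19}. Throughout I write $u=\cf_1(\det\mathcal TM)$ and $v=\cf_1(\det\mathcal TN)$, so that $\cf_1(\det\mathcal T(M\times N))=F(u,v)$ and $\varepsilon D_U(u^iv^j)=\partial_i[M]\,\partial_j[N]$, exactly as in the proof of Theorem~\ref{comp}.

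For the first identity I would apply the first relation of Theorem~\ref{comp} with $k=1$. Since $(F(u,v))^1=F(u,v)=u+v+\sum_{i,j\ge1}\alpha_{ij}u^iv^j$ has no pure powers $u^i$ or $v^j$ with $i,j\ge2$, the coefficients $\alpha^{(1)}_{ij}$ satisfy $\alpha^{(1)}_{10}=\alpha^{(1)}_{01}=1$, $\alpha^{(1)}_{00}=0$ and $\alpha^{(1)}_{i0}=\alpha^{(1)}_{0j}=0$ for $i,j\ge2$. Restricting $\partial(ab)=\sum_{i,j}\alpha^{(1)}_{ij}\,\partial_ia\,\partial_jb$ to $a,b\in W_*$ kills every term with $i\ge2$ or $j\ge2$, leaving $\partial(ab)=\partial a\,b+a\,\partial b+\alpha_{11}\partial a\,\partial b$. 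The identity then follows from $\alpha_{11}=-[\C P^1]$ (Proposition~\ref{pi}).

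For the second identity I would first extend the geometric computation in the proof of Theorem~\ref{comp} from $\partial_k=\varDelta_{(k,0)}$ to $\varDelta=\varDelta_{(1,1)}=\varphi^*(\bdu\,\du)$. Substituting $\det\mathcal T(M\times N)=\det\mathcal TM\otimes\det\mathcal TN$ into the defining class gives $\cf_1(\overline{\det\mathcal T(M\times N)})=\overline{F(u,v)}$ and $\cf_1(\det\mathcal T(M\times N))=F(u,v)$, so the same duality argument yields the Cartan-type formula $\varDelta(ab)=\sum_{i,j}\gamma_{ij}\,\partial_ia\,\partial_jb$, where $\overline{F(u,v)}\cdot F(u,v)=\sum_{i,j}\gamma_{ij}u^iv^j$. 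Again only the terms with $i,j\le1$ survive on $W_*$, so it remains to extract $\gamma_{00},\gamma_{10},\gamma_{01},\gamma_{11}$. Using the inverse-series expansion $\overline{F}=-F+\alpha_{11}F^2$ modulo terms of degree $\ge3$, I get $\overline{F}\cdot F=-F^2=-(u^2+2uv+v^2)$ modulo terms of degree $\ge3$, whence $\gamma_{00}=\gamma_{10}=\gamma_{01}=0$ and $\gamma_{11}=-2$. This gives $\varDelta(ab)=-2\,\partial a\,\partial b$.

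The main obstacle is the second identity: establishing the product formula for $\varDelta$ itself (rather than for $\partial_k$, which is already contained in Theorem~\ref{comp}) and, within it, computing the relevant low-order coefficients of $\overline{F}\cdot F$. The only genuinely non-formal input is the degree-$\le2$ part of the inverse series $\overline{F}=-F+\alpha_{11}F^2+\cdots$, from which the vanishing of $\gamma_{00},\gamma_{10},\gamma_{01}$ and the value $\gamma_{11}=-2$ drop out of $\overline{F}\cdot F=-F^2+\cdots$; everything else is the same bilinear bookkeeping used for the first identity.
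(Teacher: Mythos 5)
Your proof is correct. Note that the paper itself gives no argument for this lemma---it is quoted from \cite[Lemma~6.5]{c-l-p19}---so your derivation is a self-contained substitute, and it is built entirely from the paper's own toolkit. The first identity is exactly the Cartan formula of Theorem~\ref{comp} with $k=1$, combined with the vanishing $\partial_i|_{W_*}=0$ for $i\ge2$ and $\alpha_{11}=-[\C P^1]$; this is the same pattern of computation the paper itself performs in Proposition~\ref{multpr} when evaluating $\partial_2(ab)$ on $W_*$. For the second identity, your extension of the duality argument from $\partial_k=\varDelta_{(k,0)}$ to $\varDelta=\varDelta_{(1,1)}$ is routine: since $\det\mathcal T(M\times N)=\det\mathcal TM\otimes\det\mathcal TN$, the class being dualised is $F(u,v)\cdot\overline{F(u,v)}$, and $\varepsilon D_U(u^iv^j)=\partial_i[M]\,\partial_j[N]$ applies verbatim. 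Your coefficient extraction is also right, and in fact gives $\gamma_{11}=-2$ exactly rather than merely modulo decomposables: writing the inverse series as $\iota(w)=-w+\alpha_{11}w^2+O(w^3)$, one gets $w\,\iota(w)\equiv -(u+v)^2$ modulo total degree $\ge3$ after substituting $w=F(u,v)$, and every degree-$\ge3$ correction feeds only monomials $u^iv^j$ with $i\ge2$ or $j\ge2$, which die on $W_*$ (as do the surviving degree-two terms $\gamma_{20}u^2$ and $\gamma_{02}v^2$). One cosmetic quibble: the identification $W_*=\ker\varDelta$ is Proposition~\ref{kerDelta} (Conner--Floyd/Stong), while \cite[Corollary~6.4]{c-l-p19} is the statement that $\varDelta a=0$ forces $\partial_k a=0$ for $k\ge2$; your citation conflates the two facts, though both are available and used this way in the paper.
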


Here is an alternative way to describe multiplications on $W$ corresponding to $\SU$-linear projections.

\begin{proposition}\label{multpr1}
The multiplication~\eqref{multpi} corresponding to an $\SU$-linear projection 
$\pi$ is given by the formula 
\[
  a*b = ab + 2 ([V] - \omega) \partial a \partial b,
\]
where $[V]=\alpha_{12}=[\C P^1]^2 - [\C P^2]$ and $\omega=\pi[V]\in W_4$. Furthermore, any element of $W_4$ can be taken as $\omega$ for some~$\pi$.
\end{proposition}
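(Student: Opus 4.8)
The plan is to read off the multiplication from Proposition~\ref{multpr} and then to pin down the coefficient $\lambda_2$ by a single characteristic-number computation, after which the surjectivity statement will follow from the parametrisation of projections in Theorem~\ref{pr1}.

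First I would write the given $\SU$-linear projection as $\pi=1+\sum_{i\ge2}\lambda_i\partial_i$ (Proposition~\ref{su-pr}), so that Proposition~\ref{multpr} gives $a*b=ab+2\lambda_2\,\partial a\,\partial b$. It then remains to express $\lambda_2$ through $\omega=\pi[V]$. Applying $\pi$ to $[V]\in\MU_4$ and using that $\partial_i[V]\in\MU_{4-2i}=0$ for $i\ge3$ for dimensional reasons, I obtain $\omega=[V]+\lambda_2\,\partial_2[V]$. The integer $\partial_2[V]=\langle c_1^2,[V]\rangle$ is computed from $\langle c_1^2,[\C P^1]^2\rangle=8$ and $\langle c_1^2,[\C P^2]\rangle=9$, giving $\partial_2[V]=8-9=-1$. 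Hence $\omega=[V]-\lambda_2$, so $\lambda_2=[V]-\omega$, and substituting into Proposition~\ref{multpr} yields the claimed formula $a*b=ab+2([V]-\omega)\,\partial a\,\partial b$.

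For the second assertion I would realise an arbitrary $\omega\in W_4$ by choosing the projection via Theorem~\ref{pr1}, taking the Stong projection $\pi_0$ as the base point: every $\SU$-linear projection is $\pi_0(1+f\varDelta)$ with $\SU$-linear $f\in[\MU,\Sigma^{-4}\MU]$. The same computation as above, together with $\alpha_{12}=[V]$, gives $\pi_0[V]=[V]-\alpha_{12}=0$, and the analogous Chern-number calculation gives $\varDelta[V]=-\langle c_1^2,[V]\rangle=1$. Therefore, for $\pi=\pi_0(1+f\varDelta)$ one finds
\[
  \pi[V]=\pi_0\bigl([V]+f(\varDelta[V])\bigr)=\pi_0\bigl(f(1)\bigr).
\]
Given $\omega\in W_4$, I would take $f$ to be multiplication by $\omega\in\MU_4=\MU^{-4}$; this is $\SU$-linear, being multiplication by a fixed cobordism class, and $f(1)=\omega$. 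Since $\pi_0$ is the identity on $W\subseteq\MU$, this gives $\pi[V]=\pi_0(\omega)=\omega$, so every element of $W_4$ occurs.

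The computations with Chern numbers are routine; the point that needs care is the second part, where one must know that $\varDelta[V]$ is a unit (here $\varDelta[V]=1$), since this is exactly what makes $f\mapsto f(\varDelta[V])$ hit all of $\MU_4$ and hence $\pi[V]$ surjective onto $W_4$. The main obstacle is to avoid checking by hand that an ad hoc operator such as $1+([V]-\omega)\partial_2$ is a genuine idempotent projection onto $W$; routing the argument through Theorem~\ref{pr1} sidesteps this, since every $\SU$-linear $f$ there automatically yields an $\SU$-linear projection, so no separate verification that $\Im\pi\subseteq W$ is required.
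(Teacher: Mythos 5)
Your proof is correct, and its first half takes a genuinely different route from the paper's, while the second half essentially coincides with it. The paper never expands $\pi$ as a series in the $\partial_i$: it writes $\pi=\pi_0+\pi_0f\varDelta$ via Theorem~\ref{pr1}, computes $\pi(ab)=\pi_0(ab)+\pi_0f\varDelta(ab)$ using $\varDelta(ab)=-2\,\partial a\,\partial b$ from Lemma~\ref{Deltaab} and the $\SU$-linearity of $\pi_0f$ (the factor $\partial a\,\partial b$ comes from $\MSU_*$), obtaining $a*b=ab+2\bigl([V]-\pi_0f(1)\bigr)\partial a\,\partial b$ with surjectivity built in, since $\pi_0f(1)$ ranges over all of $W_4$; the identification $\omega=\pi[V]$ is then extracted \emph{a posteriori}, by applying $\pi$ to the resulting identity and using $\pi(a*b)=a*b$, which forces $\pi([V]-\omega)\,\partial a\,\partial b=0$. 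You instead start from the expansion $\pi=1+\sum_{i\ge2}\lambda_i\partial_i$ of Proposition~\ref{su-pr}, feed it into Proposition~\ref{multpr}, and pin down $\lambda_2=[V]-\pi[V]$ by evaluating $\pi$ on $[V]$ directly, using $\partial_i[V]=0$ for $i\ge3$ (dimension reasons) and the Chern-number computation $\partial_2[V]=\langle c_1^2,[V]\rangle=8-9=-1$, so that $\omega=\pi[V]$ appears immediately rather than as a by-product. Your numerics are consistent with the paper's conventions: $\partial_2[V]=-1$ gives $\pi_0[V]=[V]+\alpha_{12}\,\partial_2[V]=0$ and $\varDelta[V]=-\partial_2[V]=1$, the latter agreeing with the identity $\varDelta[V]=1$ invoked in the proof of Theorem~\ref{multgen}. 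For realisability of an arbitrary $\omega\in W_4$ you follow the paper's route through Theorem~\ref{pr1}; your explicit witness $f$ equal to multiplication by the forgetful image of $\omega$ (which is $\SU$-linear with $f(1)=\omega$) merely makes concrete the paper's remark that any $\omega\in W_4$ occurs as $\pi_0f(1)$, and your appeal to the converse direction of Theorem~\ref{pr1} --- that every $\SU$-linear $f$ yields an $\SU$-linear projection $\pi_0(1+f\varDelta)$ --- is legitimate, as it is asserted in the paper's proof of that theorem and holds because $\varDelta$ vanishes on~$W$. The trade-off: your argument needs one elementary characteristic-number evaluation but dispenses with Lemma~\ref{Deltaab} and makes the meaning of $\omega$ transparent from the start, whereas the paper's avoids characteristic numbers entirely and produces the formula and the surjectivity simultaneously from a single decomposition.
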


\begin{proof}
By Theorem~\ref{pr1}, $\pi = \pi_0 + \pi_0 f \varDelta$ for an $SU$-linear $f \in [\MU, \Sigma^{-4} \MU]$. Then we use the formulae from Proposition~\ref{multpr} and Lemma~\ref{Deltaab} to calculate
\[
  a * b = \pi_0(ab) + \pi_0 f \varDelta (ab) =  ab + 2[V] \partial a \partial b - 2 \pi_0 f (1) 
  \partial a \partial b.
\]
In the last identity we used the fact that $\pi_0f$ is $SU$-linear. Clearly, any element $\omega\in W_4$ can be taken as~$\pi_0 f(1)$, proving the identity. Now we have 
\[
  a*b = \pi(a*b)= \pi(ab) + 2\pi([V] - \omega)\partial a \partial b 
  = a*b + 2\pi([V] - \omega)\partial a \partial b.
\]
Therefore, $\pi([V] - \omega)=0$ and $\pi [V] = \pi(\omega) = \omega$.
\end{proof}

\begin{example}\label{CFproj}
Conner and Floyd~\cite{co-fl66} defined geometrically a right inverse to the operation $\varDelta$ on the complex bordism groups~$\MU_*$. Novikov~\cite{novi67} extended it to a cohomological operation $\varPsi \in[\Sigma^4 \MU, \MU]$ (see~\cite[Construction 4.2]{c-l-p19}). We therefore obtain an example of a projection of the form described in Theorem~\ref{pr2}, the \emph{Conner--Floyd projection} $1 - \varPsi \varDelta$. As observed in~\cite{c-l-p19} this projection is different from the Stong projection~$\pi_0$, although the two projections define the same multiplication on~$W$. This reflects the fact that both Stong and Conner--Floyd projections have the same coefficient of $\partial_2$ in their expansions $1+\sum_{i \ge 2} \lambda_i \partial_i$.
\end{example}

\begin{theorem}\label{multgen}
Any $\SU$-bilinear multiplication on~$W$ with the standard unit (obtained by the forgetful map from the unit of $\MSU$) has the form 
\[
  a*b=ab + (2 [V] - \omega) \partial a \partial b
\] 
for  $\omega \in W_4$. Any such multiplication is associative and commutative. Furthermore, the multiplications obtained from $\SU$-linear projections are those with $\omega=2\widetilde \omega$, 
$\widetilde\omega \in W_4$.
\end{theorem}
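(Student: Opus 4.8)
The plan is to characterise arbitrary $\SU$-bilinear multiplications on $W$ with the standard unit by exploiting the structural results already established: the description of $\SU$-bilinear operations via Theorem~\ref{su-polylin}, the identification $W_*=\ker\varDelta$ (Proposition~\ref{kerDelta}), and the vanishing of $\partial_i$ on $W$ for $i\ge2$ (\cite[Corollary~6.4]{c-l-p19}). A multiplication is an $\SU$-bilinear morphism $\mu\in[W\wedge W,W]$, which I would first promote to an $\SU$-bilinear operation $[\MU\wedge\MU,\MU]$ and expand using Theorem~\ref{su-polylin} as a series $\sum\mu_{ij}\,\partial_i\cdot\partial_j$. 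Evaluating on elements $a,b\in W_*$, all terms with $i\ge2$ or $j\ge2$ die, so only the terms with $i,j\in\{0,1\}$ survive: this leaves $a*b=ab+c_1\,a\,\partial b+c_2\,\partial a\,b+c_3\,\partial a\,\partial b$ with coefficients in $W_*$.

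\textbf{Pinning down the coefficients via the unit and commutativity.} The next step is to use the unit axiom to eliminate the linear-in-$\partial$ terms. Since the standard unit $1\in W_0$ satisfies $\partial 1=0$, imposing $1*b=b$ forces $c_2=0$ and, symmetrically (or by evaluating $a*1=a$), $c_1=0$. This collapses the multiplication to $a*b=ab+c\,\partial a\,\partial b$ for a single coefficient $c\in W_4$. Writing $c=2[V]-\omega$ with $\omega\in W_4$ is then merely a reparametrisation, matched to the Stong normalisation from Proposition~\ref{multpr}. Commutativity is immediate from the symmetry of this formula together with the commutativity of the underlying product $ab$ in $\MU_*$.

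\textbf{Associativity.} The substantive computational check is associativity. I would compute both $(a*b)*c$ and $a*(b*c)$ using the formula $a*b=ab+c\,\partial a\,\partial b$, together with the Leibniz-type rule for $\partial(ab)$ from Lemma~\ref{Deltaab} and the fact that $\partial_i$ kills products for $i\ge2$ so that $\partial$ of the correction term can be controlled. The key simplifications are that $\partial a\,\partial b\in W_*$ lies in a range where $\partial\partial=0$ (since $\partial_k\partial=0$ by Theorem~\ref{comp}) and that all the correction terms are proportional to $\partial a\,\partial b\,\partial c$; both bracketings should reduce to $abc$ plus a common symmetric multiple of $\partial a\,\partial b\,\partial c$, after cancelling using $\partial(\partial a\,\partial b)=0$. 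I expect the main obstacle to lie here: ensuring the higher-order corrections match requires careful bookkeeping with Lemma~\ref{Deltaab}, and one must verify that no spurious terms involving $[\C P^1]\partial a\,\partial b$ obstruct associativity — they should cancel because $\partial$ applied to the correction vanishes.

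\textbf{Identifying projection-induced multiplications.} Finally, I would match the general formula against Proposition~\ref{multpr1}, which gives $a*b=ab+2([V]-\omega')\partial a\partial b$ for multiplications coming from $\SU$-linear projections, with $\omega'=\pi[V]$ ranging over all of $W_4$. Comparing with $a*b=ab+(2[V]-\omega)\partial a\partial b$, the projection-induced multiplications are exactly those with $2[V]-\omega=2([V]-\omega')$, i.e.\ $\omega=2\omega'=2\widetilde\omega$ for $\widetilde\omega=\omega'\in W_4$. Thus the projection-induced multiplications are precisely those with $\omega$ even, completing the proof.
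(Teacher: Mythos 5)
Your route is essentially the paper's: promote the multiplication to an $\SU$-bilinear operation on $\MU$ via an $\SU$-linear projection and the forgetful map, expand by Theorem~\ref{su-polylin}, use the vanishing of $\partial_i$ on $W$ for $i\ge2$ to reduce to four terms, kill the linear terms with the two-sided unit, verify associativity with Lemma~\ref{Deltaab}, and compare with Proposition~\ref{multpr1} for the projection-induced case. However, there is one genuine gap, and it sits at the decisive step of the classification: you never impose the condition that $a*b$ actually lands in $W$, i.e.\ in $\ker\varDelta\subset\MU_*$ (Proposition~\ref{kerDelta}). As a consequence, your claim that the surviving coefficient satisfies $c\in W_4$, and that writing $c=2[V]-\omega$ is ``merely a reparametrisation'', is wrong. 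By Lemma~\ref{Deltaab} one has $\varDelta(ab)=-2\,\partial a\,\partial b$, while $\varDelta(c\,\partial a\,\partial b)=(\varDelta c)\,\partial a\,\partial b$ by $\SU$-linearity of $\varDelta$ (note $\partial a,\partial b$ lie in the image of $\MSU_*$), so $\varDelta(a*b)=0$ forces $\varDelta c=2$. Since $\varDelta[V]=1$, the admissible $c$ form the \emph{coset} $2[V]+\ker\varDelta=\{2[V]-\omega:\omega\in W_4\}$ inside $\MU_4$, which is disjoint from $W_4$ (any $c\in W_4$ has $\varDelta c=0\neq2$). As written, your classification would admit $c=0$, i.e.\ $a*b=ab$, which does not define a multiplication on $W$ at all: for instance $\varDelta\bigl([\C P^1]^2\bigr)=-2\,\partial[\C P^1]\,\partial[\C P^1]=-8\neq0$. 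A related slip feeding into this error: the coefficients produced by Theorem~\ref{su-polylin} live in $\MU_*$, not in $W_*$ as you state, and indeed the correct $c=2[V]-\omega$ is not an element of $W_4$.

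Once this constraint is inserted, the rest of your outline matches the paper's proof. The unit step is fine modulo the harmless index swap between $c_1$ and $c_2$ (to conclude the coefficients vanish you should evaluate at $b=[\C P^1]$ and use that $\partial[\C P^1]=2$ together with torsion-freeness of the coefficient rings, as the paper does). For associativity, the computation you only sketch is carried out in the paper exactly along your lines: the key identities are $\partial(\delta\,\partial a\,\partial b)=\partial\delta\cdot\partial a\,\partial b=0$, which the paper justifies by $\SU$-linearity of $\partial$ together with the vanishing of $\partial$ on $\MU_4$ (not via $\partial_k\partial=0$, which does not directly apply to the product $\partial a\,\partial b$), and the Leibniz formula $\partial(ab)=a\,\partial b+b\,\partial a-[\C P^1]\partial a\,\partial b$; the $[\C P^1]$-terms then appear symmetrically in both bracketings rather than cancelling outright. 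Your final comparison with Proposition~\ref{multpr1}, including the use of its ``any $\omega\in W_4$ occurs'' clause to get both directions of the parity characterisation, is exactly the paper's concluding step.
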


\begin{proof}
Let $m(a, b)$ be an arbitrary $\SU$-bilinear operation on $W$. Given an $\SU$-linear projection $\pi\colon\MU \to W$, we obtain an $\SU$-bilinear operation on $\MU$ by composing $m(\pi(x), \pi(y))$ with the forgetful homomorphism $W \to \MU$. By Theorem~\ref{su-polylin}, such an operation can be written as a series in products of~$\partial_i$. Restricting back to $W$ and using the fact that $\partial_i$ vanishes on $W$ for $i\ge2$, we obtain 
\[
  m(a, b) = \alpha\, ab+\beta\,\partial a\,b+\gamma\, a\,\partial b+\delta\,\partial a\,\partial b.
\]
The unit identity $m(a, 1) = a$ implies $\alpha a+\beta\partial a =a$. Substituting $a=1$ and $a=[\C P^1]$ we obtain $\alpha = 1$, $\beta= 0$. Similarly, $\gamma=0$. Finally, the multiplication takes values in $W$ if and only if 
\[
  0=\varDelta m(a, b) = \varDelta (ab + \delta\,\partial a\, \partial b) = 
  -2 \partial a\, \partial b + \varDelta\delta\, \partial a\, \partial b.
\]
Hence, $\varDelta\delta=2$. Since $\varDelta [V] = 1$, this is equivalent to $\delta = 2[V] - \omega$, 
$\omega \in W_4$.

The multiplication $a*b$ is clearly commutative. For the associativity, we have
\[
  (a*b)*c = (ab +\delta \, \partial a \, \partial b)*c = (ab +\delta \, \partial a \, \partial b)c + \delta \,  
  \partial (ab +\delta \, \partial a \, \partial b) \, \partial c.
\]
The operation $\partial$ is $\SU$-linear and vanishes on $MU_4$, hence, 
$\partial (\delta \, \partial a \, \partial b) = \partial\delta \, \partial a \, \partial b = 0$. We also have 
$\partial (ab) = a \, \partial b + b \, \partial a - [\C P^1] \partial a \, \partial b$ by Lemma~\ref{Deltaab}. Therefore,
\[
  (a*b)*c = abc + \delta \, \partial a \, \partial b \, c + \delta \,  a \, \partial b \, \partial c 
  + \delta \, b \, \partial a \, \partial c - \delta [\C P^1] \partial a \, \partial b \, \partial c=a*(b*c).
\]
Finally, Proposition~\ref{multpr1} implies that a multiplication obtained from an 
$\SU$-linear projection has $\omega$ divisible by~$2$.
\end{proof}

We refer to~\cite{b-b-n-y00} for a general algebraic approach to multiplications in complex cobordism resulting from projections.

\section{Complex orientations of $W$ and formal group laws}\label{fgsect}

This last section is motivated by the work~\cite{buch72} of Buchstaber. We start by an observation that the $c_1$-spherical bordism theory~$W$ is complex oriented for any multiplication~\eqref{multpi}. Furthermore, any complex orientation of $W$ is obtained from a complex orientation of $\MU$ by applying an $\SU$-linear projection~$\pi$. A complex orientation $w$ of $W$ defines a formal group law 
$F_W(u,v)$ in the theory~$W$. Unlike the situation with complex bordism, the coefficients of the formal group law $F_W$ \emph{do not} generate the coefficient ring $W_*$ of the theory~$W$, for any choice of $w$ and~$\pi$. This result is stated in~\cite{buch72} with a short sketch of proof (more details are provided in the case of the Stong projection~$\pi_0$). We give a complete proof using the technique developed in the previous sections.

By Theorem~\ref{multgen}, an $\SU$-bilinear multiplication on $W$ is given by the formula
\begin{equation}\label{mult}
  a*b = ab + \delta \partial a \partial b,
\end{equation}
where $\delta = 2[V] - \omega$, $\omega\in W_4$. As $\partial\delta=0$, we also have
\begin{equation}\label{partial}
  \partial (a*b) = \partial (ab) = a\, \partial b + b\, \partial a - [\C P^1] \partial a\, \partial b.
\end{equation}

In what follows we fix an $\SU$-bilinear multiplication on~$W$.

\begin{proposition}\label{worie}
The theory $W$ is complex orientable. For any $\SU$-linear projection $\pi \colon \MU \to W$ and a complex orientation $\widetilde u \in \widetilde {\MU} \vphantom{MU}^2(\C P^{\infty})$, the element $\pi(\widetilde u) \in \widetilde W^2(\C P^{\infty})$ is a complex orientation for~$W$. Furthermore, for any complex orientation $w$ of $W$ and any $\SU$-linear projection $\pi \colon \MU \to W$, there exists a complex orientation $\widetilde u \in \widetilde {\MU} \vphantom{MU}^2(\C P^{\infty})$ such that $w=\pi(\widetilde u)$.
\end{proposition}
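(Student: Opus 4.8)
The plan is to prove three assertions in turn: that $W$ is complex orientable, that $\pi(\widetilde u)$ is a valid orientation for any orientation $\widetilde u$ of $\MU$ and any $\SU$-linear projection $\pi$, and the converse that every orientation $w$ of $W$ arises this way. Recall that a complex orientation of $W$ is an element $w\in\widetilde W^2(\C P^\infty)$ whose restriction to $\widetilde W^2(\C P^1)=W^0(\pt)=W_0$ is the unit $1$; equivalently, in the power-series description $w=\sum_{i\ge0}c_i u^{i+1}$ with respect to the universal $\MU$-orientation $u$, the leading coefficient $c_0\in W_0$ must be a unit, and after normalisation we take $c_0=1$.

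First I would establish orientability and the second assertion together. Since $\pi$ is $\SU$-linear it induces a natural transformation $\pi\colon\widetilde{\MU}\vphantom{MU}^2(\C P^\infty)\to\widetilde W^2(\C P^\infty)$, and I claim $w:=\pi(\widetilde u)$ restricts to the unit on $\C P^1$. The key point is that $\pi$ is a projection onto $W$ that is identical on the image of the forgetful map $W\to\MU$; on $\C P^1$ the class $\widetilde u|_{\C P^1}$ generates $\widetilde{\MU}\vphantom{MU}^2(\C P^1)\cong\MU_0\cong\Z$ and already lies in $\ker\varDelta=\operatorname{im}(W_*\to\MU_*)$ (one checks $\widetilde u|_{\C P^1}$ is the image of the $W$-orientation restricted to $\C P^1$, since $\varDelta$ vanishes on one-dimensional data). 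Concretely, writing $\pi=1+\sum_{i\ge2}\lambda_i\partial_i$ from Proposition~\ref{su-pr}, the operations $\partial_i$ for $i\ge2$ lower cohomological degree by $2i\ge4$, hence they annihilate the restriction to $\widetilde W^2(\C P^1)$ for dimensional reasons; so $\pi(\widetilde u)$ and $\widetilde u$ have the same restriction to $\C P^1$, which is the unit. This simultaneously shows $W$ is orientable (take any $\widetilde u$) and proves the second statement.

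For the converse I would argue as follows. Given an arbitrary orientation $w\in\widetilde W^2(\C P^\infty)$ and an $\SU$-linear projection $\pi$, I must produce $\widetilde u\in\widetilde{\MU}\vphantom{MU}^2(\C P^\infty)$ with $\pi(\widetilde u)=w$. Push $w$ forward along the forgetful map $j\colon W\to\MU$ to obtain $j(w)\in\widetilde{\MU}\vphantom{MU}^2(\C P^\infty)$; this is a power series $j(w)=\sum_i c_i u^{i+1}$ whose leading term is $u$ (since $w$ restricts to the unit and $j$ respects restriction to $\C P^1$), so $j(w)$ is itself a complex orientation of $\MU$. I then set $\widetilde u=j(w)$ and must verify $\pi(j(w))=w$. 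Because $\pi$ is a projection onto $W$, we have $\pi\circ j=\id$ on $\widetilde W^2(\C P^\infty)$ — this is exactly the statement that $\pi$ is identical on $W$, now applied at the level of the represented cohomology of $\C P^\infty$ rather than on coefficients. Hence $\pi(\widetilde u)=\pi(j(w))=w$, as required.

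The main obstacle is the final identity $\pi\circ j=\id$ on $\widetilde W^2(\C P^\infty)$: one must be careful that the defining property ``$\pi$ is identical on $W$'' was stated as a property of the idempotent on the spectrum (equivalently on coefficients $W_*\hookrightarrow\MU_*$), and I need it applied to the $W$-cohomology of the space $\C P^\infty$. This is legitimate because $\pi\colon\MU\to W$ is a map of spectra with $\pi\circ j\simeq\id_W$ as self-maps of $W$ (Proposition~\ref{fiber} splits $\MU\simeq W\vee\Sigma^4\MU$ with $\pi$ the projection onto the $W$-summand and $j$ the inclusion), so the equality $\pi\circ j=\id$ holds as an equality of natural transformations on $\widetilde{(-)}(\C P^\infty)$, in particular in degree $2$. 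I would spell this out by invoking the splitting from Proposition~\ref{fiber} and the characterisation of projections as splittings of the forgetful map, so that $\pi j=\id$ is an identity of morphisms $W\to W$ and therefore induces the identity on $\widetilde W^2(\C P^\infty)$.
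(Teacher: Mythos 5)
Your argument follows the paper's proof almost step for step: for the forward direction you restrict to $\C P^1$ and use the normal form $\pi = 1+\sum_{i\ge2}\lambda_i\partial_i$ of Proposition~\ref{su-pr}; for the converse you take $\widetilde u$ to be the forgetful image $j(w)$, note that it is an orientation of $\MU$, and conclude $w=\pi(j(w))$ from $\pi\circ j=\id_W$. Your worry about the last identity is moot: the paper \emph{defines} a projection as a morphism of spectra that is identical on $W$, so $\pi j=\id$ holds in $[W,W]$ by definition and hence on $\widetilde W^2(\C P^\infty)$; your extra justification via the splitting of Proposition~\ref{fiber} is sound, just more than is needed.

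The one genuinely faulty step is your justification of the key vanishing. You claim the operations $\partial_i$, $i\ge2$, kill the restriction to $\C P^1$ ``for dimensional reasons'' because they lower degree by $2i\ge4$. That reason fails: the target group $\widetilde{\MU}\vphantom{MU}^{2-2i}(\C P^1)\cong\MU^{-2i}(\pt)=\MU_{2i}$ is nonzero, so nothing vanishes on dimensional grounds. The correct argument, which is the one the paper gives, is computational: any orientation satisfies $\widetilde u|_{\C P^1}=u|_{\C P^1}$, so by naturality $(\partial_i\widetilde u)|_{\C P^1}=\partial_i\bigl(\widetilde u|_{\C P^1}\bigr)=(\partial_i u)|_{\C P^1}$, and $\partial_i u = u\,\overline u^{\,i}$ by~\eqref{Deltau} is a power series of $u$-adic order $i+1\ge2$, hence restricts to zero on $\C P^1$, where $u^2=0$. (Equivalently: under the suspension identification $\widetilde{\MU}\vphantom{MU}^2(\C P^1)\cong\MU_0$ the restriction of $\widetilde u$ is the unit $1$, and $\partial_i(1)=0$ for $i\ge1$.) Your parenthetical detour through $\ker\varDelta$ and ``$\varDelta$ vanishes on one-dimensional data'' is likewise unsubstantiated, but it plays no role once the above repair is made; with it, your proof is complete and coincides with the paper's.
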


\begin{proof}
Let $u \in \widetilde {\MU} \vphantom{MU}^2(\C P^{\infty})$ be the canonical orientation of complex cobordism. Then $\widetilde u |_{\C P^1} = u|_{\C P^1}$. Hence, $\pi(\widetilde u)|_{\C P^1} = \pi(u)|_{\C P^1} = u|_{\C P^1}$ by Proposition~\ref{su-pr}, as $\partial_i u|_{\C P^1} = 0$ for $i \ge 1$. This implies that 
$\pi(\widetilde u)$ is a complex orientation of~$W$.

Conversely, given an orientation $w \in \widetilde W^2(\C P^{\infty})$, its image under the forgetful map $\widetilde W^2(\C P^{\infty})\to \widetilde {\MU} \vphantom{MU}^2(\C P^{\infty})$ is a complex orientation 
$\widetilde u$ of $\MU$. Therefore, $w=\pi(\widetilde u)$ for any $\SU$-linear projection $\pi \colon \MU \to W$.
\end{proof}

\begin{remark}
As one can see from the proof, the statement holds because a complex orientation of a cohomology theory is defined via the unit of the theory. The forgetful map $W \to \MU$ and projections $\MU \to W$ preserve the standard unit (from $\MSU$) and therefore send orientations to orientations.
\end{remark}

A complex orientation $w\in\widetilde W^2(\C P^\infty)$ defines a formal group law with coefficients in $W^*$, which we denote by $F_W(u,v)$ (it depends on both the multiplication and $w$, but we do not reflect this in the notation). For example, we may take $w=\pi_0(u)$, where $u \in \widetilde{\MU} \vphantom{MU}^2(\C P^{\infty})$ is the canonical orientation and $\pi_0$ is the Stong projection.
The formal group law $F_W$ is classified by a multiplicative transformation 
$\psi\colon\MU\to W$ sending $u$ to~$w$. However, even in the case 
$w=\pi_0(u)$, the transformation $\psi$ does not coincide with the projection 
$\pi_0\colon\MU\to W$, as the latter is \emph{not} multiplicative. 
To study the formal group law $F_W$ we map $W$ further to a one-parameter extension of the $U$-theory, as described next.

\begin{construction}\label{Gamma}
Following \cite{buch72}, consider the multiplicative cohomology theory  $\varGamma$ defined by
\[
  \varGamma^*(X) = \MU^*(X)[t]/(t^2=-[\C P^1]t+\delta)
\] 
for any CW-complex $X$. Additively, $\varGamma^*(X)$ is a free $\MU^*(X)$-module with basis $\{1,t\}$, and the multiplication is defined by the relation $t^2=-[\C P^1]t+\delta$.

There is a natural transformation $\varphi \colon W \to \varGamma$, given by $\varphi(x) = x + t \partial x$.
\end{construction}

\begin{proposition}[{\cite[Lemma 2]{buch72}}]
The transformation $\varphi \colon W \to \varGamma$ is multiplicative.
\end{proposition}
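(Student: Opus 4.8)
The plan is to verify multiplicativity of $\varphi\colon W\to\varGamma$ directly, by checking that $\varphi$ is compatible with the products on both sides, and to exploit the fact that $\varGamma$ is \emph{defined} by an explicit relation so that the computation reduces to the product formulae for $W$ already established. Specifically, I would check that $\varphi(a*b)=\varphi(a)\varphi(b)$ for $a,b\in W_*$, and invoke Lemma~\ref{monic} (or its pairing version Lemma~\ref{pmonic}) to conclude that an identity verified on homotopy groups of torsion-free spectra holds as an identity of transformations. This is the same reduction-to-$\pi_*$ strategy used throughout Section~\ref{linsect} and in the proof of Theorem~\ref{comp}.

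The key computation is to expand both sides. On the target, using the definition $\varphi(x)=x+t\,\partial x$ and the relation $t^2=-[\C P^1]t+\delta$, I compute
\[
  \varphi(a)\varphi(b)=(a+t\,\partial a)(b+t\,\partial b)
  =ab+t(a\,\partial b+b\,\partial a)+t^2\,\partial a\,\partial b,
\]
and then substitute $t^2=-[\C P^1]t+\delta$ to obtain
\[
  \varphi(a)\varphi(b)=ab+\delta\,\partial a\,\partial b
  +t\bigl(a\,\partial b+b\,\partial a-[\C P^1]\,\partial a\,\partial b\bigr).
\]
On the source, I apply $\varphi$ to the product $a*b=ab+\delta\,\partial a\,\partial b$ from~\eqref{mult}, giving
\[
  \varphi(a*b)=(a*b)+t\,\partial(a*b).
\]
Here the constant term $a*b=ab+\delta\,\partial a\,\partial b$ matches the constant term above, and by~\eqref{partial} we have $\partial(a*b)=a\,\partial b+b\,\partial a-[\C P^1]\,\partial a\,\partial b$, which matches the coefficient of $t$. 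Thus the two expressions agree term by term.

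The main subtlety is not the algebra but making sure the identity is being verified in the right place: the products on both $W$ and $\varGamma$ must be the same multiplication determined by the fixed $\delta=2[V]-\omega$, and $\partial$ must be the $\SU$-linear boundary operation restricted to $W_*$, where $\partial\delta=0$ so that~\eqref{partial} holds cleanly. Once the identity $\varphi(a*b)=\varphi(a)\varphi(b)$ is confirmed on $W_*=\pi_*(W)$, together with $\varphi(1)=1$ (immediate, since $\partial 1=0$), multiplicativity as a transformation of theories follows from the injectivity statement of Lemma~\ref{monic}, as $W$ and $\varGamma$ are built from $\MU$ and hence are connective of finite type with torsion-free homotopy. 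I expect the only place requiring care is the bookkeeping in substituting the relation for $t^2$ and confirming that the coefficient of $t$ really reproduces $\partial(a*b)$ rather than some uncorrected expression; everything else is formal.
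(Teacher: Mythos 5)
Your proposal is correct and matches the paper's own proof: the paper performs exactly the same computation, expanding $(a+t\,\partial a)(b+t\,\partial b)$, substituting the relation $t^2=-[\C P^1]t+\delta$, and matching the result against $a*b+t\,\partial(a*b)$ via~\eqref{mult} and~\eqref{partial}. Your additional appeal to Lemma~\ref{pmonic} to promote the identity from $\pi_*$ to a statement about transformations is a harmless elaboration of the same reduction the paper uses implicitly (and is in fact unnecessary here, since~\eqref{mult} and~\eqref{partial} are stated as identities for arbitrary cohomology classes, so the calculation applies verbatim to $a,b\in W^*(X)$).
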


\begin{proof}
Given $a,b\in W_*$, we use~\eqref{mult} and~\eqref{partial} to calculate
\begin{multline*}
(a+t \partial a) (b + t \partial b) = ab + t (a \partial b + b \partial a) + t^2 \partial a \partial b = \\
= a*b + t \partial (a*b) +(t^2 + t [\C P^1]-\delta) \partial a \partial b = a*b + t \partial (a*b).\qedhere
\end{multline*}
\end{proof}

There is a canonical orientation of the theory $\varGamma$ given by the image of the canonical orientation $u \in \widetilde{\MU} \vphantom{MU}^2(\C P^{\infty})$ under the natural inclusion $\MU \hookrightarrow \varGamma$. This orientation of $\varGamma$ will be also denoted by~$u$.

The map $\varphi$ sends the orientation $w$ to an orientation $\varphi(w)$ of the theory $\varGamma$. Hence, $\varphi (w)$ can be written as a power series $\gamma(u)$ in $u$ with coefficients in $\varGamma^*=\varGamma^*(pt)$.

\begin{proposition}[{\cite[Lemma 3]{buch72}}]\label{fg}
We have 
\[
  \varphi_* F_W (u, v) = \gamma F_U (\gamma^{-1}(u), \gamma^{-1} (v)),
\]
where $F_U(u, v)$ is the formal group law in complex cobordism, considered as a formal group law over 
$\varGamma^*$ via the natural inclusion $\MU \hookrightarrow \varGamma$.
\end{proposition}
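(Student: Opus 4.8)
The plan is to recognise the formula as the standard interplay between a multiplicative transformation and a reparametrisation of the orientation. The argument factors through two independent observations. First, a multiplicative transformation carries the formal group law of its source to the formal group law of its target computed for the \emph{image} orientation. Second, the formal group law attached to a reparametrised orientation of a fixed theory is obtained from the original one by conjugation with the reparametrising power series. Composing the two gives the displayed identity.

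For the first observation I would test the defining property of $F_W$ on the two universal line bundles $\xi_1,\xi_2$ over $\C P^\infty\times\C P^\infty$. Denoting by $x_1,x_2\in\widetilde W^2(\C P^\infty\times\C P^\infty)$ the first Chern classes in $W$-theory for the orientation $w$, the definition of the formal group law reads $c_1^W(\xi_1\otimes\xi_2)=F_W(x_1,x_2)$. Applying $\varphi$ and using that it is multiplicative (as just established) and natural, the right-hand side becomes $(\varphi_*F_W)\bigl(\varphi(x_1),\varphi(x_2)\bigr)$, where $\varphi_*F_W$ is the series obtained by applying $\varphi_*\colon W^*\to\varGamma^*$ to the coefficients of $F_W$. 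Since $\varphi$ preserves the unit, it sends the orientation $w$ to the orientation $\varphi(w)$ of $\varGamma$, so the classes $\varphi(x_i)$ are the first Chern classes in $\varGamma$-theory for the orientation $\varphi(w)$, and the image of the left-hand side is the corresponding first Chern class of $\xi_1\otimes\xi_2$. By the uniqueness of the formal group law, this identifies $\varphi_*F_W$ with the formal group law of $\varGamma$ for the orientation $\varphi(w)$.

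For the second observation, both the canonical orientation $u$ and the orientation $\varphi(w)$ live on the single theory $\varGamma$, and by the definition of $\gamma$ they are related by $\varphi(w)=\gamma(u)$. Because $\varphi$ preserves the unit, $\varphi(w)$ and $u$ restrict on $\C P^1$ to the same canonical generator, whence $\gamma(u)=u+O(u^2)$ and $\gamma$ is invertible as a power series. The change-of-orientation formula then shows that the formal group law of $\varGamma$ for the orientation $\gamma(u)$ equals $\gamma\bigl(F_U(\gamma^{-1}(x),\gamma^{-1}(y))\bigr)$, where $F_U$ is the formal group law of $\varGamma$ for the canonical orientation $u$ inherited from $\MU\hookrightarrow\varGamma$. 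Combining this with the first observation yields $\varphi_*F_W(u,v)=\gamma F_U(\gamma^{-1}(u),\gamma^{-1}(v))$.

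The point requiring the most care is the change-of-orientation formula, together with the invertibility of $\gamma$. I would justify it directly. Reparametrising the orientation by $\gamma$ reparametrises every first Chern class by $\gamma$, so the $\gamma(u)$-Chern class of a line bundle is $\gamma$ applied to its $u$-Chern class. Writing $z_i$ for the $\gamma(u)$-Chern class of $\xi_i$, we then have $c_1^{\gamma(u)}(\xi_1\otimes\xi_2)=\gamma\bigl(F_U(\gamma^{-1}(z_1),\gamma^{-1}(z_2))\bigr)$, and uniqueness identifies the formal group law for $\gamma(u)$ with $\gamma F_U(\gamma^{-1},\gamma^{-1})$. The invertibility of $\gamma$ used here is exactly the normalisation $\gamma(u)=u+O(u^2)$, so the step is routine once this leading term is recorded.
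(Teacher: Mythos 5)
Your proposal is correct and follows essentially the same route as the paper: the paper's proof likewise combines the two facts that a multiplicative transformation carries $F_W$ to the formal group law of $\varGamma$ for the image orientation $\varphi(w)$, that $F_U$ over $\varGamma^*$ is the formal group law for the canonical orientation $u$, and then concludes from $\varphi(w)=\gamma(u)$ via the change-of-orientation formula. You merely spell out the standard verifications (testing on line bundles over $\C P^\infty\times\C P^\infty$ and the normalisation $\gamma(u)=u+O(u^2)$) that the paper leaves implicit.
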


\begin{proof}
Since $\varphi$ is multiplicative, the formal group law corresponding to the orientation $\varphi (w)$ is 
$\varphi_* F_W$. Similarly, since the inclusion $\MU \hookrightarrow \varGamma$ is also multiplicative, the formal group law corresponding to the orientation $u$ is~$F_U$, regarded as a formal group law 
over~$\varGamma^*$. Now $\varphi(w)=\gamma(u)$ implies the required identity.
\end{proof}

\begin{construction}
Denote by $J=\MU^{<0} \subset \MU^*$ the ideal of elements of non-zero degree. Then $J^2$ is the ideal of decomposables in~$\MU^*$. Clearly, $J^2 + t J$ is an ideal in~$\varGamma^*$.

Consider the quotient ring $R = \varGamma^*/(J^2+tJ)$. As a graded abelian group, 
$R=(\MU^*\!/J^2) \oplus \Z\langle t \rangle$, $\deg t = -2$. The multiplication in $R$ is defined by  $ab=0$, $at=0$ for $a,b\in J/J^2$ and $t^2 = \delta$, so that $t^3=0$. This implies $R^{<-2} R^{<0} = 0$.
\end{construction}

We write
\[  
  F_W (u, v) = u+v+\sum_{i\ge1,\,j\ge1} \omega_{ij} u^i v^j.
\]
In order to compare the subring generated by the coefficients $\omega_{ij}$ with the whole ring 
$W^*$ we need to identify the characteristic $s_k$-numbers of~$\omega_{ij}$. (Recall that $s_k$ is the Chern characteristic number corresponding to the symmetric polynomial $t_1^k+\cdots+t_n^k$ in Chern roots; it vanishes on decomposables $J^2\subset\MU^*$.)

We shall calculate the formal group law $\varphi_* F_W(u, v) = u + v + \sum (\omega_{ij} + t \partial \omega_{ij}) u^i v^j$ over the ring $R$ (that is, reducing the coefficents mod $J^2 + t J$), using the formula from Proposition~\ref{fg}. As the $s_k$-numbers vanish on $J^2$, in this way we obtain information about the $s_k$-numbers of the coefficients of the formal group law~$F_W$.

\begin{lemma}\label{gamma}
The following identity holds in $\varGamma^*$:
\[
  \gamma(u)= u - (\lambda + (2\ell+1)t) u^2 + \sum_{i \ge 2} \gamma_{i+1} u^{i+1} \mod J^2+tJ,
\]
where $\lambda \in\MU^{-2} = W^{-2}$, $2\ell=\partial\lambda$,  $\ell \in \Z$, and 
$\gamma_{i+1} =  (-1)^i\alpha_{1i} + \omega_i$, $\omega_i \in W^{-2i}$.
Furthermore, any $\lambda$ and $\omega_i$ can be obtained from some orientation 
$w\in \widetilde W^2(\C P^{\infty})$.
\end{lemma}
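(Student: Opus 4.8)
The plan is to compute $\gamma(u)=\varphi(w)$ directly from the formula $\varphi(x)=x+t\,\partial x$ of Construction~\ref{Gamma}, reducing coefficients modulo $J^2+tJ$ as we go. By Proposition~\ref{worie} the orientation $w$ has a forgetful image $\widehat w\in\widetilde{\MU}\vphantom{MU}^2(\C P^\infty)$ which is a complex orientation of $\MU$, and, identifying the $W$-class $w$ with $\widehat w$ through $W\hookrightarrow\MU\hookrightarrow\varGamma$, we have $\varphi(w)=\widehat w+t\,\partial\widehat w$. Thus the part of $\gamma(u)$ not involving $t$ is $\widehat w\bmod J^2$, while the $t$-part is $t\cdot(\partial\widehat w\bmod J)$, and the two contributions can be analysed separately.

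For the non-$t$ part I would fix the Stong orientation $w_0=\pi_0(u)$ as a base point. By Proposition~\ref{pi} its forgetful image is $\widehat w_0=u+\sum_{k\ge2}\alpha_{1k}\,u\overline u^{\,k}$; since $\overline u\equiv-u\bmod J$ and each $\alpha_{1k}\in J$, formula~\eqref{Deltau} gives $\widehat w_0\equiv u+\sum_{k\ge2}(-1)^k\alpha_{1k}u^{k+1}\bmod J^2$, with vanishing coefficient at $u^2$. Every orientation of $W$ arises from $w_0$ by a reparametrisation $w=w_0+\sum_{i\ge1}\omega_i w_0^{\,i+1}$ with arbitrary $\omega_i\in W^{-2i}$ (the torsor structure of complex orientations on a complex-oriented theory); this is precisely where the realisability of arbitrary $\omega_i$ enters. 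Passing to $\MU$ and using that all higher coefficients of $\widehat w_0$ and all $\omega_i$ lie in $J$, each summand contributes $\omega_i u^{i+1}$ modulo $J^2$, whence $\widehat w\equiv u-\lambda u^2+\sum_{i\ge2}\bigl((-1)^i\alpha_{1i}+\omega_i\bigr)u^{i+1}\bmod J^2$ with $\lambda:=-\omega_1\in\MU^{-2}=W^{-2}$. This gives the asserted non-$t$ coefficients $\gamma_{i+1}$.

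For the $t$-part I would control $\partial\widehat w\bmod J$ by a degree count: writing $\partial\widehat w=\sum_k d_k u^k$ with $d_k\in\MU^{4-2k}=\MU_{2k-4}$, only $d_2\in\MU_0=\Z$ survives modulo $J$, so the $t$-part is concentrated in the coefficient of $u^2$. To evaluate $d_2$ I would use additivity of $\partial$ together with the Cartan formula $\partial(xy)=(\partial x)\,y+x\,\partial y+\sum_{i,j\ge1}\alpha_{ij}\,\partial_i x\,\partial_j y$ (the cohomological form of Theorem~\ref{comp}): the term $\partial u=u\overline u$ contributes $-1$ to the coefficient of $u^2$, while $\partial(b_1u^2)$ with $b_1=-\lambda$ contributes $\partial b_1=-\partial\lambda$ through its summand $(\partial b_1)u^2$, all remaining summands and all $\partial(b_iu^{i+1})$ with $i\ge2$ being supported in powers $u^{j}$ with $j\ge3$. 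Setting $2\ell=\partial\lambda$ (an even integer, since $\partial[\C P^1]=2$ and $\MU^{-2}=\Z\langle[\C P^1]\rangle$) yields $d_2=-(2\ell+1)$, so the coefficient of $u^2$ in $\gamma(u)$ is $-\lambda-(2\ell+1)t=-(\lambda+(2\ell+1)t)$, as claimed.

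The main obstacle I expect is the correct computation of $\partial$ on the cohomology classes of $\C P^\infty$: in particular, justifying the Cartan formula for $\partial$ on internal products rather than merely the exterior-product identity of Theorem~\ref{comp}, and tracking precisely which summands can contribute to the coefficient of $u^2$ after reduction modulo $J$. A secondary point needing care is the interplay of the two reductions, namely checking power by power that modulo $J^2$ the cross-terms $\omega_i\cdot(\text{higher part of }\widehat w_0^{\,i+1})$ and the $J^2$-tail of $\widehat w_0$ genuinely drop out; this follows from $\omega_i,\alpha_{1k}\in J$, but should be verified degree by degree.
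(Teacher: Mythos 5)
Your route is genuinely different from the paper's: you parametrise orientations downstairs in $W$, via the torsor structure $w=w_0+\sum_{i\ge1}\omega_i * w_0^{*(i+1)}$ over the Stong orientation, whereas the paper parametrises them upstairs in $\MU$, writing $w=\pi_0(\widetilde u)$ and decomposing the corresponding $\SU$-linear operation as $\widetilde u=(1+\lambda\partial+g\varDelta)u$ (Theorem~\ref{su-lin} and Lemma~\ref{gDelta}), so that the quadratic coefficient enters only through $\lambda\partial$ and all higher data through $\pi_0 g\varDelta(u)=\sum_{i\ge2}\omega_i * w^{*(i+1)}$, a $*$-series with \emph{no} quadratic term. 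Your $t$-part is fine: the degree count showing only $d_2$ survives modulo $J$, and the Cartan-formula evaluation $d_2=-(1+\partial\lambda)=-(2\ell+1)$, correctly reproduce the paper's use of $\partial\pi_0=\partial$; the worry you flag about internal versus exterior products is handled by pulling back Theorem~\ref{comp} along the diagonal, and is not where the trouble lies.

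The genuine gap is in the non-$t$ part, at $i=1$. The powers in your reparametrisation must be $*$-powers: the torsor structure of orientations is with respect to the multiplication of the complex-oriented theory $W$ (with ordinary $\MU$-products the series would not lie in $W^*(\C P^\infty)=\Ker\varDelta$, and the coefficients could not be arbitrary elements of $W^*$). But then, by Proposition~\ref{multpr} and Theorem~\ref{multgen}, $\omega_1 * w_0^{*2}=\omega_1 w_0^{*2}+\delta\,\partial\omega_1\,\partial(w_0^{*2})$, and your justification ``the cross-terms drop out because $\omega_i,\alpha_{1k}\in J$'' fails here: $\partial\omega_1$ is an \emph{integer}, not an element of $J$. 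Writing $\omega_1=m[\C P^1]$, so $\partial\omega_1=2m$, and using~\eqref{partial} to get $\partial(w_0^{*2})=2w_0\partial w_0-[\C P^1](\partial w_0)^2\equiv-2u^3\bmod J$, the $i=1$ summand contributes an extra term $-4m\,\delta\,u^3$ modulo $J^2$, which is nonzero there since $\delta\notin J^2$ (indeed $s_2(\delta)=-6+24\alpha\ne0$). Your claimed congruence for $\widehat w$ is therefore not what your computation yields. The lemma can still be rescued along your lines, because the correction happens to be absorbable into $\omega_2$: modulo $J^2$ one has $\delta\equiv(8c-2)[\C P^2]$ for some $c\in\Z$, hence $4m\delta\equiv 8m(4c-1)[\C P^2]$, while $W_4=\Z\langle 9[\C P^1]^2-8[\C P^2]\rangle$ reduces to $8\Z\langle[\C P^2]\rangle$ modulo $J^2$; so $-4m\delta$ is congruent mod $J^2$ to an element of $W^{-4}$, and the triangular form of this substitution also preserves the ``arbitrary $\omega_i$'' claim. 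But this absorption is exactly the verification your write-up waves away, so as written the proof of the displayed formula and of the realisability statement is incomplete. Note that the paper's decomposition $1+\lambda\partial+g\varDelta$ is designed precisely to avoid this issue: $\lambda\partial u\equiv-\lambda u^2\bmod J^2$ with no cubic residue, and the $*$-series for $\pi_0 g\varDelta(u)$ starts at $w^{*3}$, so every product $\omega_i*(\cdot)$ that occurs has $\partial\omega_i\in J$.
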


\begin{proof}
By Proposition~\ref{worie}, every complex orientation $w$ has the form $\pi_0(\widetilde u)$ for some orientation $\widetilde u$ in $\MU$. Writing  $\widetilde u\in\widetilde{\MU} \vphantom{MU}^2(\C P^\infty)$ as a power series $f(u)$ in the standard orientation~$u$ and observing that $u^{i+1} = \overline \partial_i u$, we obtain
\begin{equation}\label{wtu}
  \widetilde u=f(u)=u+\sum_{i \ge 1} \lambda_i u^{i+1}=
  \bigl(1 + \sum_{i \ge 1} \lambda_i \overline \partial_i\bigr)u=(1+\lambda\partial+g\varDelta)u.
\end{equation}
In the last identity we used Theorem~\ref{su-lin} and Lemma~\ref{gDelta} to write the $\SU$-linear operation $f = 1 + \sum_{i \ge 1} \lambda_i \overline \partial_i$ as $1+\lambda\partial+g\varDelta$, for a certain $\SU$-linear operation~$g$. Note that any 
$\lambda$ and $g$ can be obtained from an orientation $\widetilde u$.

Now we calculate:
\begin{multline}\label{gammau4}
  \gamma(u) = \varphi(w) = w + t \partial w = \pi_0(\widetilde u) + t \partial \pi_0(\widetilde u) 
  = \pi_0 f (u) + t \partial f (u)
  \\=(\pi_0+t\partial)(1+\lambda\partial+g\varDelta)u
  =\pi_0(u)+\lambda\partial u+\pi_0g\varDelta u+(2\ell+1)t\partial u+t\partial g\varDelta u
  \\=\pi_0(u)+(\lambda+(2\ell+1)t)\partial u+\pi_0g\varDelta u+t\partial g\varDelta u,
\end{multline}
where we have used the identity $\partial \pi_0=\partial$ from Proposition~\ref{pi}, and the identities 
$\pi_0(\lambda\partial)=\pi_0(\lambda)\partial=\lambda\partial$, $t\partial(\lambda\partial)=t(\partial\lambda)\partial=2\ell t\partial$, which are valid because both $\pi_0$ and $\partial$ are $\SU$-linear.

Next we consider each of the four summands on the right hand side of~\eqref{gammau4} separately.

Note that $\partial_i u = u\overline{u}^i=(-1)^iu^{i+1}\mod J$. Using the formula from Proposition~\ref{pi} we obtain
\begin{equation}\label{1sum}
  \pi_0 (u) = u + \sum_{i \ge 2} \alpha_{1i}\partial_i u=
  u + \sum_{i \ge 2}(-1)^i \alpha_{1i}u^{i+1} \mod J^2.
\end{equation}
Similarly, 
\begin{equation}\label{2sum}
  (\lambda+(2\ell+1)t)\partial u=-(\lambda+(2\ell+1)t)u^2\mod J^2+tJ.
\end{equation}

By Lemma~\ref{gDelta}, we have $g\varDelta(u)=\sum_{i\ge2}\mu_i u^{i+1}$.
It follows that $g\varDelta(u)$ is an element of 
$\widetilde{\MU} \vphantom{MU}^2(\C P^\infty)$ with the property $g\varDelta(u)|_{\C P^2} = 0$. Furthermore, by Lemma~\ref{gDelta} any 
$v\in \widetilde{\MU} \vphantom{MU}^2(\C P^\infty)$ with 
$v|_{\C P^2} = 0$ has the form $g\varDelta(u)$ for some $\SU$-linear operation~$g$. Applying the projection $\pi_0$ we obtain the element 
$\pi_0g\varDelta(u)\in\widetilde W^2(\C P^\infty)$ with the same property. Therefore, 
$\pi_0g\varDelta(u)$ can be written as a power series in $w$ without quadratic part with respect to the product~$*$:
\[
  \pi_0g\varDelta(u)=\sum_{i\ge2}\omega_i*w^{*(i+1)},
\]
where $\omega_i \in W^{-2i}$ can be arbitrary. 
By~\eqref{mult}, $\omega_i*w = \omega_i w + \delta\partial \omega_i \partial w=\omega_iw\mod J^2$, where the last identity holds because $\delta$ and $\partial\omega_i$ are in $J$ for $i \ge 2$. Furthermore, 
\[
  w = \pi_0(\widetilde u) = \pi_0(u) + \lambda\partial (u) + \pi_0 g \varDelta (u) = u\mod J.
\]
It follows that
\begin{equation}\label{3sum}
  \pi_0g\varDelta(u)=\sum_{i\ge2}\omega_i u^{i+1}\mod J^2.
\end{equation}

Finally, by Lemma~\ref{gDelta}, $\partial g \varDelta = \sum \limits_{i \ge 2} \lambda_i \partial_i$, where $\lambda_i \in\MU^{2-2i} \subset J$. It follows that $\partial g \varDelta (u) = 0\mod J$ and $t \partial g \varDelta (u) = 0\mod tJ$. Substituting this together with~\eqref{1sum}, \eqref{2sum}, \eqref{3sum} in~\eqref{gammau4} we obtain the required identity $\mod J^2+tJ$.
\end{proof}

The proof of Lemma~\ref{gamma} implies the following condition on the coefficients of the series expansion of $\SU$-linear projections~$\pi$ in Proposition~\ref{su-pr}:

\begin{proposition}
Let $\pi = 1 + \sum_{i \ge 2} \lambda_i \partial_i$ be an $\SU$-linear projection $\MU\to W$. Then $\lambda_i=\alpha_{1i}+\omega_i$ modulo decomposables in $\MU^*$, where $\omega_i$ can be an arbitrary element of~$W^{-2i}$.
\end{proposition}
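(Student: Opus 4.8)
The plan is to reduce to the Stong projection and then recycle the computation already performed in the proof of Lemma~\ref{gamma}. First I would apply Theorem~\ref{pr1} with the $\SU$-linear base projection $\pi_0$: since $\pi$ is $\SU$-linear, it has the form $\pi=\pi_0(1+f\varDelta)=\pi_0+\pi_0 f\varDelta$ for an $\SU$-linear operation $f\in[\MU,\Sigma^{-4}\MU]$. The correction term $\pi_0 f\varDelta$ is $\SU$-linear and vanishes on $W$ (because $\varDelta$ does), so Lemma~\ref{gDelta} lets me write it as $\pi_0 f\varDelta=\sum_{i\ge2}\mu_i\partial_i$ with $\mu_i\in\MU^{-2i}$. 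Adding this to the expansion $\pi_0=1+\sum_{k\ge2}\alpha_{1k}\partial_k$ from Proposition~\ref{pi} and invoking the uniqueness of the $\partial_i$-expansion (Theorem~\ref{su-lin}), I obtain $\lambda_i=\alpha_{1i}+\mu_i$, so the whole problem reduces to controlling $\mu_i$ modulo decomposables.

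The key step is to evaluate the correction term on the canonical orientation~$u$ and read off its $u^{i+1}$-coefficient in two ways. On the one hand, using $\partial_i u=u\overline u^{\,i}\equiv(-1)^iu^{i+1}\pmod J$ together with $\mu_i\in J$, one gets $\pi_0 f\varDelta(u)\equiv\sum_{i\ge2}(-1)^i\mu_i u^{i+1}\pmod{J^2}$. On the other hand, $\pi_0 f\varDelta$ is exactly the type of operation analysed in the step leading to~\eqref{3sum}, with the $\SU$-linear $f$ playing the role of $g$: since $f\varDelta(u)$ restricts trivially to $\C P^2$, so does $\pi_0 f\varDelta(u)$, and therefore $\pi_0 f\varDelta(u)\equiv\sum_{i\ge2}\omega_i u^{i+1}\pmod{J^2}$ with each $\omega_i$ the image under the forgetful map of an element of $W^{-2i}$. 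Comparing coefficients gives $\mu_i\equiv(-1)^i\omega_i\pmod{J^2}$, so $\mu_i$ is, modulo decomposables, the image of an element of $W^{-2i}$; absorbing the sign into $\omega_i$ yields $\lambda_i\equiv\alpha_{1i}+\omega_i$.

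For the arbitrariness claim I would reuse the surjectivity observation from the proof of Lemma~\ref{gamma}: by Lemma~\ref{gDelta} every series $v\in\widetilde{\MU}^2(\C P^\infty)$ with $v|_{\C P^2}=0$ equals $f\varDelta(u)$ for some $\SU$-linear $f$, and applying $\pi_0$ realises $\pi_0 f\varDelta(u)=\sum_{i\ge2}\omega_i * w^{*(i+1)}$ with the coefficients $\omega_i\in W^{-2i}$ completely free; hence every prescribed family can be attained by a suitable projection. I expect the only delicate point to be the bookkeeping in the reduction modulo~$J^2$, namely checking that the $*$-expansion collapses to the ordinary power-series expansion (via $\omega_i * w\equiv\omega_i u$ and $w\equiv u\pmod J$) so that the $\omega_i$ read off from the two expansions genuinely coincide. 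Since this is precisely the content of~\eqref{3sum}, no new computation is required and the proposition follows.
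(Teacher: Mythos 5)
Your proposal is correct and takes essentially the same route as the paper: both rest on the decomposition $\pi=\pi_0+\pi_0 f\varDelta$ from Theorem~\ref{pr1}, the expansion~\eqref{stong} of~$\pi_0$, the computation~\eqref{3sum} of $\pi_0 f\varDelta(u)$ modulo $J^2+tJ$ recycled from the proof of Lemma~\ref{gamma} (including the surjectivity remark there for the arbitrariness of the~$\omega_i$), and a comparison of the coefficients of $u^{i+1}$ with the sign $(-1)^i$ absorbed into~$\omega_i$. Your intermediate step of writing the correction term as $\sum_{i\ge2}\mu_i\partial_i$ via Lemma~\ref{gDelta} and the uniqueness in Theorem~\ref{su-lin} is merely a repackaging of the paper's direct comparison of the two expressions for~$\pi(u)$.
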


\begin{proof}
We have
\[
  \pi(u)=u+\sum_{i \ge 2} \lambda_i \partial_iu=u+\sum_{i \ge 2} \lambda_i u\overline u^i=
  u+\sum_{i \ge 2}(-1)^i\lambda_i u^{i+1}\mod J^2.
\]
On the other hand, using Theorem~\ref{pr1}, \eqref{stong} and~\eqref{3sum} we obtain
\[
  \pi(u)=\pi_0(u)+\pi_0f\varDelta(u)=u+\sum_{i \ge 2}\alpha_{1i}\partial_iu+
  \sum_{i \ge 2}\omega_iu^{i+1}=u+\sum_{i \ge 2}((-1)^i\alpha_{1i}+\omega_i)u^{i+1}\mod J^2.
\]
Comparing the coefficients in the above two expressions for $\pi(u)$ we obtain the result.
\end{proof}

Now we return to the formal group law $\varphi_* F_W(u, v) = u + v + \sum (\omega_{ij} + t \partial \omega_{ij}) u^i v^j$ of the theory~$\varGamma$.

\begin{lemma}\label{fgr}
In the notation of Lemma~\ref{gamma},
\begin{multline*}
\varphi_* F_W(u, v) =
  u + v - 2\bigl(\lambda + (2\ell+1)t\bigr) uv - 2\delta(2\ell+1)^2(uv^2+vu^2) + \\
  + \sum_{i\ge1,\,j\ge1} \alpha_{ij}u^i v^j + \sum_{i\ge3} \gamma_i \bigl((u+v)^i-u^i-v^i\bigr)
  \mod J^2+tJ.
\end{multline*}
\end{lemma}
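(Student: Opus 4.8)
The plan is to start from the identity $\varphi_* F_W(u,v) = \gamma\bigl(F_U(\gamma^{-1}(u),\gamma^{-1}(v))\bigr)$ of Proposition~\ref{fg} and to evaluate the right-hand side in the ring $R=\varGamma^*/(J^2+tJ)$, using the explicit form of $\gamma$ from Lemma~\ref{gamma}. Writing $p=\lambda+(2\ell+1)t$, that lemma gives $\gamma(u)=u-p\,u^2+\sum_{i\ge3}\gamma_i u^i$ modulo $J^2+tJ$. Thus the whole statement is the conjugation of the formal group law $F_U$ by the coordinate change $\gamma$, carried out modulo $J^2+tJ$.

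The key structural observation I would exploit is that in $R$ almost all products of negative-degree coefficients vanish: since $J^2=0$ and $tJ=0$ in $R$, any product of two elements of $J$ is zero and $t$ annihilates $J$, so the only surviving quadratic product is $t^2=\delta$. In particular $p\,\alpha_{kl}=0$, $\gamma_i\alpha_{kl}=0$, $p\gamma_i=0$, $\gamma_i\gamma_j=0$ and $p^3=0$, whereas $p^2=\lambda^2+2(2\ell+1)\lambda t+(2\ell+1)^2t^2=(2\ell+1)^2\delta$ does \emph{not} vanish. Consequently every monomial surviving in $\varphi_*F_W$ is either linear in the generators $\alpha_{kl},p,\gamma_i$ or is the single genuinely quadratic term coming from $p^2$. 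This lets me split the answer into three contributions with all cross terms (in particular the $p\gamma_i$ ones) dropping out: the part from $\gamma=\mathrm{id}$, which is simply $F_U=u+v+\sum_{i,j\ge1}\alpha_{ij}u^iv^j$; the part governed by the quadratic coefficient $p$, which must be kept to second order; and the first-order part of the higher coefficients $\gamma_i$, $i\ge3$.

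For the $p$-part I set $\gamma_i=0$ and work with $\sigma(u)=u-p\,u^2$. Inverting modulo $J^2+tJ$ gives $\sigma^{-1}(u)=u+p\,u^2+2p^2u^3$, the higher coefficients involving $p^3=0$ or vanishing products. Substituting into $F_U$ and using $p\,\alpha_{kl}=0$ collapses the $\alpha$-corrections, so that $S:=F_U(\sigma^{-1}(u),\sigma^{-1}(v))\equiv(u+v)+p(u^2+v^2)+2p^2(u^3+v^3)+\sum_{i,j\ge1}\alpha_{ij}u^iv^j$. Applying $\sigma$ amounts to $\sigma(S)=S-p\,S^2$, and modulo $J^2+tJ$ only $-p(u+v)^2$ and $-2p^2(u+v)(u^2+v^2)$ survive from $-p\,S^2$ (all others carry a vanishing factor $p\alpha$, $p^3$ or $p^2\alpha$). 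The identities $p(u^2+v^2)-p(u+v)^2=-2p\,uv$ and $2p^2(u^3+v^3)-2p^2(u+v)(u^2+v^2)=-2p^2(uv^2+u^2v)$ then produce, after inserting $p=\lambda+(2\ell+1)t$ and $p^2=(2\ell+1)^2\delta$, the first three terms of the asserted formula together with $\sum_{i,j\ge1}\alpha_{ij}u^iv^j$.

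For the $\gamma_i$-part ($i\ge3$) I use that these coefficients survive only linearly, so I linearize the conjugation around $\gamma=\mathrm{id}$ in the direction $\epsilon(u)=\sum_{i\ge3}\gamma_i u^i$. The derivative equals $\epsilon(F_U(u,v))-\epsilon(u)\,\frac{\partial F_U}{\partial u}-\epsilon(v)\,\frac{\partial F_U}{\partial v}$, and modulo $J^2+tJ$ every product $\gamma_i\alpha_{kl}$ dies, so this reduces to $\epsilon(u+v)-\epsilon(u)-\epsilon(v)=\sum_{i\ge3}\gamma_i\bigl((u+v)^i-u^i-v^i\bigr)$; higher-order terms in $\epsilon$ only contribute products $\gamma_i\gamma_j=0$. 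Summing the three contributions gives the stated identity. I expect the main difficulty to be precisely this bookkeeping of the reduction modulo $J^2+tJ$: one cannot naively linearize in $\gamma-\mathrm{id}$, because the quadratic coefficient $p$ must be retained to second order through the surviving relation $p^2=(2\ell+1)^2\delta$ — which is exactly what creates the term $-2\delta(2\ell+1)^2(uv^2+u^2v)$ — while every other coefficient is treated to first order and all cross terms are shown to vanish.
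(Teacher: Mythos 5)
Your proposal is correct and follows essentially the same route as the paper: both start from $\varphi_*F_W(u,v)=\gamma F_U(\gamma^{-1}(u),\gamma^{-1}(v))$ (Proposition~\ref{fg}), insert the expansion of $\gamma$ from Lemma~\ref{gamma}, and evaluate in $R=\varGamma^*/(J^2+tJ)$ using exactly the relations $J^2=0$, $tJ=0$, $t^2=\delta$. Your perturbative bookkeeping (second order in $p=\lambda+(2\ell+1)t$, first order in the $\gamma_i$, all cross terms vanishing) is just a reorganisation of the paper's direct computation of $\gamma^{-1}$ — your $\sigma^{-1}(u)=u+pu^2+2p^2u^3$ together with the linear $\gamma_i$-corrections reproduces the paper's $\varepsilon_2=\lambda+(2\ell+1)t$, $\varepsilon_3=2(2\ell+1)^2\delta-\gamma_3$, $\varepsilon_j=-\gamma_j$ — followed by the same substitution into~\eqref{gammaxy}.
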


\begin{proof}
We have $\varphi_* F_W (u, v) = \gamma F_U (\gamma^{-1}(u), \gamma^{-1} (v))$  by Proposition~\ref{fg}. Furthermore, 
$\gamma(u)= u - (\lambda + (2\ell+1)t) u^2 + \sum_{i \ge 3} \gamma_i u^i \mod J^2+tJ$
by Lemma~\ref{gamma}. The identity
$(F_U(x, y))^i = (x + y) ^i\mod J$ implies
\begin{equation}\label{gammaxy}
  \gamma(F_U(x, y)) = F_U(x, y) - \bigl(\lambda + (2\ell+1)t\bigr) (x+y)^2 + 
  \sum \limits_{i \ge 3} \gamma_i (x+y)^i \mod J^2+tJ.
\end{equation}

Next we need to calculate $x=\gamma^{-1}(u)$. We denote
\[
  \gamma^{-1}(u) = u+ \sum_{j \ge 2} \varepsilon_j u^j,\quad 
  \gamma(u) = u + \sum_{i \ge 2} \gamma_i u^i, \text{ where } 
  \gamma_2 = -\lambda -(2\ell+1)t.
\]
All subsequent calculations will be carried out in the ring $R=\varGamma^*/(J^2+tJ)$, that is, 
$\mod J^2+tJ$, see Construction~\ref{Gamma}. We have $\varepsilon_j\in R^{2-2j}$ and $R^{<0}R^{<-2}=0$, which implies 
$\varepsilon_2(u+\sum_{i \ge 2} \gamma_i u^i)=\varepsilon_2(u - (2\ell+1)t u^2)$ and 
$\varepsilon_j(u+\sum_{i \ge 2} \gamma_i u^i) = \varepsilon_j u$ for $j \ge 3$. Hence,
\begin{multline*}
  u = \gamma^{-1}(\gamma (u)) = u + \sum_{i \ge 2} \gamma_i u^i 
  +\sum_{j \ge 2} \varepsilon_j (u + \sum_{i \ge 2} \gamma_i u^i )^j \\=
  u + \sum_{i \ge 2} \gamma_i u^i + \varepsilon_2  (u - (2\ell+1)t u^2)^2 + 
  \sum_{j \ge 3} \varepsilon_j u^j.
\end{multline*}
Comparing the coefficients of $u^j$ we obtain
\begin{align*}
  \varepsilon_2&=-\gamma_2=\lambda+(2\ell+1)t,\\
  \varepsilon_3&=2\varepsilon_2(2\ell+1)t-\gamma_3=2(\lambda+(2\ell+1)t)(2\ell+1)t-\gamma_3=
   2(2\ell+1)^2\delta-\gamma_3,\\
  \varepsilon_j&=-\gamma_j\quad\text{for }j\ge4.
\end{align*}
Therefore,
\[
  \gamma^{-1}(u) = u +\bigl(\lambda + (2\ell+1)t\bigr)u^2 
  + \bigl(2(2\ell+1)^2\delta-\gamma_3\bigr)u^3 -\sum_{j \ge 4} \gamma_j u^j.
\]

It remains to substitute $x = \gamma^{-1}(u)$ and $y = \gamma^{-1}(v)$  in~\eqref{gammaxy}.
We have $F_U(x,y)=x+y+\sum\alpha_{ij}x^iy^j=x+y+\sum\alpha_{ij}u^iv^j$ in $R$, and similarly
$\sum_{i\ge3}\gamma_i(x+y)^i=\sum_{i\ge3}\gamma_i(u+v)^i$. For the remaining summand of~\eqref{gammaxy}, we calculate
\begin{multline*}
  \bigl(\lambda + (2\ell+1)t\bigr)(x+y)^2=\bigl(\lambda + (2\ell+1)t\bigr)
  \bigl(u+v+(\lambda + (2\ell+1)t)(u^2+v^2)\bigr)^2\\ = 
  \bigl(\lambda + (2\ell+1)t\bigr)\bigr((u+v)^2 + 2(\lambda + (2\ell+1)t) (u+v)(u^2+v^2)\bigr)
  \\= (\lambda + (2\ell+1)t) (u+v)^2 + 2(2\ell+1)^2\delta (u+v)(u^2+v^2).
\end{multline*}
Substituting these expressions in~\eqref{gammaxy} we obtain
\begin{multline*}
  \gamma (F_U(\gamma^{-1}(u),\gamma^{-1}(v))) = \gamma^{-1}(u) + \gamma^{-1}(v) + \sum   
  \alpha_{ij}u^i v^j + \sum \limits_{i \ge 3} \gamma_i (u+v)^i \\ 
  -(\lambda + (2\ell+1)t) (u+v)^2 - 2(2\ell+1)^2\delta(u+v)(u^2+v^2) 
  = u + (\lambda + (2\ell+1)t)u^2 \\
  +(2(2\ell+1)^2\delta-\gamma_3)u^3 - \sum \limits_{i \ge 4} \gamma_i u^i + 
  v + (\lambda + (2\ell+1)t)v^2 + (2(2\ell+1)^2\delta-\gamma_3)v^3
  -\sum \limits_{i \ge 4}\gamma_i v^i\\
  +\sum \alpha_{ij}u^i v^j + \sum \limits_{i \ge 3} \gamma_i (u+v)^i 
  - (\lambda + (2\ell+1)t) (u+v)^2 - 2(2\ell+1)^2 \delta (u^3 + uv^2+vu^2+v^3) \\ 
  = u + v - 2(\lambda + (2\ell+1)t) uv - 2\delta (2\ell+1)^2(uv^2+vu^2) + \sum \alpha_{ij} u^i v^j 
  + \sum \limits_{i \ge 3} \gamma_i ((u+v)^i - u^i - v^i),
\end{multline*}
as claimed.
\end{proof}

\begin{lemma}
The coefficients of the formal group law $F_W (u, v) = u+v+\sum\omega_{ij} u^i v^j$
satisfy
\begin{equation}\label{higher}
  \sum \limits_{i+j=k+1} \omega_{ij}u^i v^j = \sum \limits_{i+j=k+1} \alpha_{ij}u^i v^j + 
  \gamma_{k+1}\bigl((u+v)^{k+1} - u^{k+1} - v^{k+1}\bigr)\mod J^2
\end{equation}
for $k \ge 3$.
\end{lemma}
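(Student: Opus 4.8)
The plan is to obtain~\eqref{higher} by comparing, degree by degree, the two expressions already available for the formal group law $\varphi_* F_W(u,v)$ over the ring $R=\varGamma^*/(J^2+tJ)$: the tautological one $\varphi_* F_W(u,v)=u+v+\sum(\omega_{ij}+t\partial\omega_{ij})u^iv^j$ coming from the multiplicativity of $\varphi$ together with the identity $\varphi(x)=x+t\partial x$, and the explicit one furnished by Lemma~\ref{fgr}.

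First I would isolate the homogeneous component of degree $k+1$ in $u,v$ on each side and work in $R$. The coefficient of a monomial $u^iv^j$ with $i+j=k+1$ lies in degree $-2k$, and for $k\ge3$ this is strictly below $-2$; since the only $t$-component of $R$ sits in degree $-2$ (recall $R=(\MU^*\!/J^2)\oplus\Z\langle t\rangle$ with $\deg t=-2$), every such coefficient is automatically $t$-free in $R$. In particular the correction $t\partial\omega_{ij}$ in the tautological expression, being of degree $-2k$, vanishes in $R$, so that this side contributes exactly $\sum_{i+j=k+1}\omega_{ij}u^iv^j \bmod J^2$.

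Next I would read off the degree-$(k+1)$ part of the formula in Lemma~\ref{fgr}. The point is that for $k\ge3$ the anomalous low-degree corrections $-2(\lambda+(2\ell+1)t)uv$ and $-2\delta(2\ell+1)^2(uv^2+vu^2)$ sit in degrees $2$ and $3$ in $u,v$ and therefore do not interfere; the only surviving contributions of degree $k+1\ge4$ are $\sum_{i+j=k+1}\alpha_{ij}u^iv^j$ together with $\gamma_{k+1}\bigl((u+v)^{k+1}-u^{k+1}-v^{k+1}\bigr)$. Equating the two degree-$(k+1)$ parts in $R$ then yields precisely~\eqref{higher}; and since all elements involved are $t$-free (using that $\gamma_{k+1}$ has no $t$-component for $k\ge3$ by Lemma~\ref{gamma}), the congruence modulo $J^2+tJ$ is in fact a congruence modulo $J^2$.

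I expect the only real subtlety to be the bookkeeping that pins down the threshold $k\ge3$: one must check that it is exactly the exclusion of the degree-$2$ and degree-$3$ correction terms of Lemma~\ref{fgr} (equivalently, of the $t$-bearing coefficient living in degree $-2$) that makes the clean form~\eqref{higher} hold, whereas for $k=1,2$ additional terms would be required. The remaining manipulations are routine once the degrees are tracked.
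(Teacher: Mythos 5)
Your proof is correct and is essentially the paper's own argument: both compare the tautological expansion $\varphi_* F_W(u,v)=u+v+\sum(\omega_{ij}+t\,\partial\omega_{ij})u^iv^j$ with the formula of Lemma~\ref{fgr} over $R=\varGamma^*/(J^2+tJ)$, observe that $t\,\partial\omega_{ij}\in tJ$ for $i+j>2$, and extract the homogeneous component of degree $k+1\ge4$, where the low-degree correction terms of Lemma~\ref{fgr} cannot contribute. Your additional bookkeeping --- upgrading the congruence from $J^2+tJ$ to $J^2$ via the observation that all coefficients involved (including $\gamma_{k+1}$ for $k\ge3$, by Lemma~\ref{gamma}) are $t$-free and that the $t$-component of $R$ lives only in degree $-2$ --- merely makes explicit what the paper's terse proof leaves implicit.
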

\begin{proof}
We have $t \partial \omega_{ij}=0\mod tJ$ for $i+j>2$, which implies
$\varphi_* F_W(u, v) = u+v+(\omega_{11}+t\partial\omega_{11})uv+\sum_{i+j>2}\omega_{ij}u^iv^j\mod J^2+tJ$. Now the required identity follows from the identity of Lemma~\ref{fgr}.
\end{proof}

For an integer $k\ge1$ let
\[
  m_k = \gcd \biggl\{{k+1 \choose i}, \, 1\le i \le k \biggr\}=
  \begin{cases}  
  1&\text{if $k+1\neq p^\ell$ for any prime $p$,}\\
  p&\text{if $k+1=p^\ell$ for some prime $p$ and integer $\ell>0$.}
  \end{cases}
\]

\begin{theorem}[{see \cite[Chapter~X]{ston68} or \cite[Theorem~6.10]{c-l-p19}}]\label{Wring}
With respect to the multiplication defined by the Stong projection $\pi_0$, the ring $W_*$ is polynomial on generators in every positive even degree
except~$4$:
\[
  W_* \cong \Z[x_1, x_k \colon k \ge 3], \quad x_1=[\C P^1], \quad x_k \in W_{2k}.
\]
The polynomial generators $x_k$ are specified by the condition $s_k(x_k) = \pm m_k m_{k-1}$ for $k\ge3$.
\end{theorem}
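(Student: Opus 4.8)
The plan is to prove the theorem by the characteristic-number method, realising $W_*=\ker\varDelta$ inside $\MU_*=\Z[a_1,a_2,\dots]$ (with $s_k(a_k)=\pm m_k$) and reducing the polynomiality to the behaviour of the Milnor numbers $s_k$ on $W_*$.

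First I would fix the generator degrees rationally. Since multiplication by the torsion class $\theta$ vanishes after $\otimes\,\Q$, the Conner--Floyd exact sequence following Proposition~\ref{cofiber} degenerates to $W_{2n}\otimes\Q\cong(\MSU_{2n}\oplus\MSU_{2n-2})\otimes\Q$. As $\MSU_*\otimes\Q$ is polynomial on generators in degrees $2i$, $i\ge2$, the Poincar\'e series of $W_*\otimes\Q$ equals $(1+t)\prod_{i\ge2}(1-t^i)^{-1}=(1-t)^{-1}\prod_{i\ge3}(1-t^i)^{-1}$, which is exactly the series of a polynomial ring with one generator in every positive even degree except $4$. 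This already forces the exceptional degree $4$, where $W_4\cong\Z$ is spanned by the decomposable class $[\C P^1]*[\C P^1]$, to carry no polynomial generator.

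Next I would detect the indecomposables by $s_k$. The point is that $s_k$ kills all decomposables of $\MU_*$, and the Stong product $a*b=ab+2[V]\,\partial a\,\partial b$ differs from the $\MU_*$-product only by the $\MU_*$-decomposable term $2[V]\,\partial a\,\partial b$; inspecting degrees shows that for $k\ge3$ this extra term is again a product of positive-degree classes, so $s_k$ vanishes on all $*$-decomposables of $W_*$ and descends to the indecomposable quotient $W_{2k}/(\text{decomposables})$. The core computation is then that of the image $s_k(W_{2k})\subseteq\Z$: writing a general class as $c\,a_k$ plus $\MU_*$-decomposables, the condition $\varDelta(c\,a_k+\text{dec})=0$, analysed through the formulae of Theorem~\ref{comp} and Lemma~\ref{Deltaab} that express $\varDelta$ and $\partial$ via the formal group law, forces $m_{k-1}\mid c$ and is sharp; the factor $m_{k-1}$ reflects that the $\varDelta$-closure condition is governed by $\partial$, which drops the degree to the $(k-1)$-level. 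Combined with $s_k(a_k)=\pm m_k$ this yields $s_k(W_{2k})=m_km_{k-1}\Z$, and one selects $x_1=[\C P^1]$ and, for $k\ge3$, classes $x_k\in W_{2k}$ realising $s_k(x_k)=\pm m_km_{k-1}$.

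Finally I would upgrade to an integral statement by induction on degree. In degree $2n$ both $\Z[x_1,x_3,x_4,\dots]$ and $W_*$ are free of the rank computed in the first step, and the inductive hypothesis identifies their decomposable parts, so it remains to see that $[x_n]$ generates the rank-one quotient $W_{2n}/(\text{decomposables})$. As $s_n$ is nonzero on this quotient with image $m_nm_{n-1}\Z$ generated by $s_n(x_n)$, the claim is equivalent to the torsion-freeness of the indecomposables of $W_*$. This is the main obstacle: rationally everything is immediate, but excluding hidden torsion in the indecomposable quotient --- equivalently, showing that the subring $\Z[x_1,x_3,x_4,\dots]$ has index $1$ in $W_{2n}$ in every degree --- requires genuine integral input. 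I would obtain it from a Hattori--Stong/Hurewicz argument: $H_*(W;\Z)$ is torsion-free (via the equivalence $W\simeq\MSU\wedge\Sigma^{-2}\C P^2$ of Proposition~\ref{Wmsu} and the vanishing of the Hurewicz image of $\theta$), and comparing the Hurewicz map of $W$ with that of $\MU$ shows that the numbers $s_k$ faithfully detect the indecomposables, which gives polynomiality together with the stated normalisation $s_k(x_k)=\pm m_km_{k-1}$.
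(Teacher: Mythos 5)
Your outline reproduces the correct skeleton of the standard argument (note the paper itself does not prove this theorem; it quotes it from \cite[Chapter~X]{ston68} and \cite[Theorem~6.10]{c-l-p19}, so the comparison is with those proofs): the rational rank count via the Conner--Floyd sequence is right, as is your observation that the correction term $2[V]\,\partial a\,\partial b$ in the Stong product is $\MU$-decomposable precisely for $k\ge 3$, which explains the exceptional degree $4$ where $x_1*x_1=9[\C P^1]^2-8[\C P^2]$ spans $W_4$. However, both steps that carry the actual content of the theorem are asserted rather than proved. The first is the computation $s_k(W_{2k})=m_km_{k-1}\Z$: you claim that $\varDelta(c\,a_k+\mathrm{dec})=0$ ``forces $m_{k-1}\mid c$ and is sharp'' because the closure condition is ``governed by $\partial$''. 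This cannot be a formal degree-shift argument: the condition couples $c$ with the decomposable part, and the answer is sensitive to $2$-primary phenomena tied to the $\theta$-torsion of $\MSU_*$. Indeed, at $k=2$ your heuristic would predict image $m_2m_1\Z=6\Z$, whereas the true image is $s_2(W_4)=24\Z$, of index $8=2^3$ (not $m_1=2$) in $s_2(\MU_4)=3\Z$. So both the divisibility and, especially, the sharpness for $k\ge3$ require the genuine characteristic-number computation that is the heart of Stong's Chapter~X (reproduced in \cite{c-l-p19}); your sketch replaces it by a heuristic that demonstrably fails one degree lower.

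The second gap is the integral upgrade. You correctly reduce polynomiality to the absence of torsion in the indecomposable quotient of $W_*$, but the proposed justification --- $H_*(W)$ torsion-free plus ``comparing the Hurewicz map of $W$ with that of $\MU$'' --- is not an argument. A torsion-free graded ring can have torsion in its indecomposables (in $\Z[x,y]/(x^2-2y)$ the indecomposable quotient in the degree of $y$ is $\Z/2$), injectivity of $W_*\to H_*(W)$ (which does hold, since $W\to\MU$ splits off) says nothing about this quotient, and the single number $s_k$ cannot detect an index defect that only the full family of $s_\omega$-numbers sees. The case of $\MSU$ itself shows such formal inputs are insufficient: $H_*(\MSU)$ is torsion-free and concentrated in even degrees, yet $\MSU_*/\Tors$ is \emph{not} a polynomial ring. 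Closing this gap requires either Stong's determination of the exact image of $W_*$ under the Hurewicz homomorphism, or the inductive prime-by-prime argument of \cite{c-l-p19} exploiting the explicit projection $\pi_0=1+\sum_{k\ge2}\alpha_{1k}\partial_k$ and the known polynomial structure of $\MU_*$. As it stands, your proposal is a sound road map with the two hard verifications missing.
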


\begin{lemma}[\cite{buch72}]\label{coeff}
For the coefficients of the formal group law $F_W (u, v)$, we have
\[
  \gcd \bigl\{ s_{i+j-1}(\omega_{ij}) \colon i+j = k+1\bigr\} 
  = m_k \bigl ( 1+(-1)^k (k+1)+ c_k m_km_{k-1} \bigr )
\]
for $k \ge 3$, where  $c_k$ can be an arbitrary integer depending on the orientation~$w$.
\end{lemma}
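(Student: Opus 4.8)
The plan is to read the $s_k$-numbers of the coefficients $\omega_{ij}$ directly off the congruence~\eqref{higher} and then take their greatest common divisor. Since $s_k$ is additive and vanishes on the decomposables $J^2$, extracting the coefficient of $u^iv^j$ (with $i+j=k+1$, $1\le i\le k$) in~\eqref{higher} and applying $s_k$ gives
\[
  s_k(\omega_{ij}) = s_k(\alpha_{ij}) + \binom{k+1}{i}\, s_k(\gamma_{k+1}),
\]
because the coefficient of $u^iv^j$ in $(u+v)^{k+1}-u^{k+1}-v^{k+1}$ is $\binom{k+1}{i}$. Thus the problem splits into computing the $s_k$-numbers of the universal coefficients $\alpha_{ij}$ and the single number $s_k(\gamma_{k+1})$. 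If it turns out that $s_k(\alpha_{ij})=-\binom{k+1}{i}$, the binomial factor is common to all terms and we get $s_k(\omega_{ij})=\binom{k+1}{i}\bigl(s_k(\gamma_{k+1})-1\bigr)$, so taking the gcd over $i$ and invoking the very definition $m_k=\gcd\{\binom{k+1}{i}\colon 1\le i\le k\}$ yields $\gcd_i s_k(\omega_{ij})=m_k\,\bigl|s_k(\gamma_{k+1})-1\bigr|$.

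The key computation is therefore $s_{i+j-1}(\alpha_{ij})=-\binom{i+j}{i}$. I would derive this from Mishchenko's logarithm $\log_{F_U}(u)=\sum_{n\ge0}\frac{[\C P^n]}{n+1}u^{n+1}$ together with $s_n([\C P^n])=n+1$. Working in $\MU^*\otimes\Q$ modulo decomposables, the composition-inverse of $\log_{F_U}$ negates the higher coefficients to first order, so from $F_U=\log_{F_U}^{-1}(\log_{F_U}(u)+\log_{F_U}(v))$ one obtains $\alpha_{ij}\equiv-\binom{i+j}{i}\frac{[\C P^{i+j-1}]}{i+j}$ modulo decomposables over $\Q$. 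Applying $s_{i+j-1}$ and using $s_{i+j-1}([\C P^{i+j-1}])=i+j$ gives $s_{i+j-1}(\alpha_{ij})=-\binom{i+j}{i}$; in particular $s_k(\alpha_{1k})=-(k+1)$ and $s_k(\alpha_{ij})=-\binom{k+1}{i}$ for $i+j=k+1$. (The checks $s_1(\alpha_{11})=-2$ and $s_2(\alpha_{12})=-3$ against $\alpha_{11}=-[\C P^1]$ and $\alpha_{12}=[V]$ confirm the formula.)

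It remains to evaluate $s_k(\gamma_{k+1})$. By Lemma~\ref{gamma} we have $\gamma_{k+1}=(-1)^k\alpha_{1k}+\omega_k$ with $\omega_k\in W_{2k}$ arbitrary, so $s_k(\gamma_{k+1})=(-1)^{k+1}(k+1)+s_k(\omega_k)$. By Theorem~\ref{Wring} the ring $W_*$ is polynomial for $k\ge3$ with a single generator $x_k$ in degree $2k$ satisfying $s_k(x_k)=\pm m_km_{k-1}$, while all decomposables of that degree have vanishing $s_k$; hence $s_k$ maps $W_{2k}$ onto $m_km_{k-1}\Z$, and $s_k(\omega_k)=c_k' m_km_{k-1}$ for an arbitrary integer $c_k'$ depending on $w$. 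Then $s_k(\gamma_{k+1})-1=-\bigl(1+(-1)^k(k+1)-c_k'm_km_{k-1}\bigr)$, and substituting into the gcd formula above and renaming $c_k=-c_k'$ gives
\[
  \gcd\bigl\{s_{i+j-1}(\omega_{ij})\colon i+j=k+1\bigr\}=m_k\bigl(1+(-1)^k(k+1)+c_k m_km_{k-1}\bigr),
\]
up to the sign inherent in the gcd, which is the claim.

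The main obstacle is the $s$-number computation of the $\alpha_{ij}$: it is carried out over $\Q$, so one must check that the rational intermediate expressions produce integral $s$-numbers and, more importantly, justify that inverting the logarithm modulo decomposables really does just negate coefficients to first order (higher-order corrections all lie in the decomposable ideal and are killed by $s_k$). The only other input needed is the surjectivity of $s_k$ onto $m_km_{k-1}\Z$ on $W_{2k}$, which is immediate from Theorem~\ref{Wring}; everything else is bookkeeping with binomial coefficients.
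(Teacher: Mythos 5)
Your proof is correct and takes essentially the same route as the paper: apply $s_k$ to the congruence~\eqref{higher}, use $s_{i+j-1}(\alpha_{ij})=-\binom{i+j}{i}$ (the mod-$J^2$ form of the formal group law), and combine Lemma~\ref{gamma} with Theorem~\ref{Wring} --- together with the observation that decomposables of $W_*$ forget to decomposables of $\MU_*$ --- to conclude $s_k(\omega_k)=c_k'm_km_{k-1}$ with $c_k'$ arbitrary. The only difference is cosmetic: where the paper simply cites Adams for the identity $\sum_{i+j=k+1}\alpha_{ij}u^iv^j=-a_k\bigl((u+v)^{k+1}-u^{k+1}-v^{k+1}\bigr)/m_k \mod J^2$, you rederive it via Mishchenko's logarithm, which is the standard proof of that cited fact.
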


\begin{proof}
Recall that $\MU_* \cong\Z[a_1, a_2, \ldots]$, $a_k \in\MU_{2k}$ and $s_k(a_k)=m_k$.

There is the following formula for the coefficients of the formal group law $F_U$ modulo decomposables:
\[
  \sum \limits_{i+j=k+1} \alpha_{ij}u^i v^j = -
  a_k \frac{(u+v)^{k+1} - u^{k+1} - v^{k+1}}{m_k}\mod J^2, 
\]
see, e.\,g.,~\cite{adam74}. Hence, $\alpha_{ij} = - \frac{{i+j\choose i}}{m_{i+j-1}} a_{i+j-1}\mod J^2$ and, in particular, $\alpha_{1j} = - \frac{j+1}{m_j} a_j\mod J^2$.
This implies that $\gamma_{k+1} = (-1)^k \alpha_{1k} + \omega_k = (-1)^{k+1} \frac{k+1}{m_k}a_k + \omega_k\mod J^2$. Substituting this in~\eqref{higher} we obtain 
\[
  \sum \limits_{i+j=k+1} \omega_{ij}u^i v^j = - \bigl(a_k + (-1)^k (k+1) a_k - m_k \omega_k\bigr) 
  \frac{(u+v)^{k+1} - u^{k+1} - v^{k+1}}{m_k}\mod J^2.
\]
It follows that
\begin{multline*}
  \gcd\bigl\{  s_{i+j-1}(\omega_{ij}) \colon i+j = k+1\bigr\}  \\
  =s_k\bigl(a_k+(-1)^k(k+1)a_k - m_k\omega_k\bigr)
  =m_k\bigl(1 +(-1)^k(k+1) - s_k(\omega_k)\bigr).
\end{multline*}
Formula \eqref{mult} implies that if an element $x \in W_{2i}$, $i \ge 3$, is decomposable in $W_*$ (with respect to an arbitrary multiplication), then its forgetful image in $\MU_*$ is also decomposable. Hence, $\omega_k = c_k x_k\mod J^2$ for an integer~$c_k$. Therefore, $s_k(\omega_k) = c_k m_km_{k-1}$, and the result follows.
\end{proof}

\begin{theorem}\label{notgen}
For any complex orientation of $W$, the coefficients of the corresponding formal group law $F_W$ do not generate the ring $W_*$.
\end{theorem}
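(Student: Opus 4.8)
The plan is to show that the subring generated by the coefficients $\omega_{ij}$ of $F_W$ fails to surject onto the indecomposables of $W_*$ in one fixed positive degree, and that this failure is forced for \emph{every} orientation by the arithmetic of Lemma~\ref{coeff}.

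First I would reduce the statement to a divisibility condition on characteristic numbers. By Theorem~\ref{Wring} the ring $W_*$ is polynomial, so in degree $2k$ with $k\ge 3$ the group of indecomposables $QW_{2k}$ is infinite cyclic, generated by the class of $x_k$, and the Chern number $s_k$ embeds $QW_{2k}$ into $\Z$ with image $m_k m_{k-1}\Z$ (since $s_k(x_k)=\pm m_k m_{k-1}$ and $s_k$ vanishes on decomposables). The subring generated by the $\omega_{ij}$ surjects onto $QW_{2k}$ if and only if the degree-$2k$ coefficients, namely those with $i+j=k+1$, already generate $QW_{2k}$; equivalently, $\gcd\{s_k(\omega_{ij}):i+j=k+1\}=m_k m_{k-1}$. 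Thus it suffices to produce one value of $k\ge 3$ for which this gcd necessarily exceeds $m_k m_{k-1}$.

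Second I would feed Lemma~\ref{coeff} into this criterion. That lemma gives $\gcd\{s_{i+j-1}(\omega_{ij}):i+j=k+1\}=m_k\bigl(1+(-1)^k(k+1)+c_k m_k m_{k-1}\bigr)$, where $c_k\in\Z$ is the only quantity depending on the chosen orientation and may be arbitrary. Hence, for a fixed orientation, generation in degree $2k$ holds only when the fixed integer $c_k$ realises $\bigl|1+(-1)^k(k+1)+c_k m_k m_{k-1}\bigr|=m_{k-1}$, i.e.\ when $1+(-1)^k(k+1)\equiv\pm m_{k-1}\pmod{m_k m_{k-1}}$. The third step, which is the real content, is the choice of degree. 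Most candidate degrees are unhelpful: whenever $m_k=1$, or whenever the residue works out to $\pm m_{k-1}$, some $c_k$ rescues generation, and a short case analysis (splitting on the parity of $k$ and on whether $k$ and $k+1$ are prime powers) shows the small ``obvious'' cases are all of this form. The smallest genuinely obstructed degree is $2k=8$: here $m_4=5$, $m_3=2$, and $1+(-1)^4\cdot 5=6$, so Lemma~\ref{coeff} yields $\gcd=5(6+10c_4)$, which can never equal $m_4 m_3=10$ because $|6+10c_4|=2$ has no integer solution. Therefore the coefficients $\omega_{ij}$ fail to generate $W_8$, and hence all of $W_*$, for every multiplication and every complex orientation.

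I expect the main obstacle to be precisely this third step: the criterion is satisfiable in infinitely many degrees, so one cannot argue degree-by-degree uniformly and must instead isolate a degree where the residue $1+(-1)^k(k+1)$ modulo $m_k m_{k-1}$ is pinned away from $\pm m_{k-1}$. This needs $m_k$ large enough ($m_k=5$ suffices) and rules out the tempting $p=2$ and $p=3$ cases through the explicit form of $m_k$. Once degree $8$ is singled out the verification is immediate, and no geometric input beyond Theorem~\ref{Wring} and Lemma~\ref{coeff} is required.
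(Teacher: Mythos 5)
Your proposal is correct and follows essentially the same route as the paper: reduce via Theorem~\ref{Wring} and the $s_k$-numbers to the gcd formula of Lemma~\ref{coeff}, then exhibit a degree where no integer $c_k$ can make the gcd equal $m_k m_{k-1}$. The paper runs this argument for the whole family $k=2^\ell=p^s-1$ with $p>3$ (ruling out $c_k$ by a short congruence argument), and your degree $2k=8$, i.e.\ $k=4=2^2=5^1-1$, is precisely the smallest member of that family, so your single-degree check is a specialization of the paper's proof rather than a different method.
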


\begin{proof}
Consider a polynomial generator $x_k$ from Theorem~\ref{Wring} for $k \ge 3$. Suppose $x_k$ lies in the ring generated by the coefficients of $F_W$. Since an element decomposable in $W_*$ is also decomposable in $\MU_*$ (in dimensions $\geq 6$), we get
\[
  s_k(x_k)=\pm \gcd \bigl\{ s_{i+j-1}(\omega_{ij}) \colon i+ j = k+1\bigr\} = \pm m_k ( 1+(-1)^k (k+1)+ c_k m_km_{k-1} ).
\]  
On the other hand, by Theorem~\ref{Wring} we have $s_k(x_k) = \pm m_km_{k-1}$. We show that there is $k\ge3$ such that the two numbers do not agree even up to a sign.

Indeed, recall that $m_k = p$ if $k+1= p^s$ for some prime $p$, and $m_k=1$ otherwise. Therefore, if $k=2^\ell$ and, in addition, $k+1 = p^s$ for an odd prime $p$, then $m_km_{k-1}=2p$, whereas $m_k ( 1+(-1)^k (k+1)+ c_k m_km_{k-1} )=2p+p(2^\ell+2c_kp)=2p(1+2^{\ell-1 } + c_kp)$. Suppose $\pm 2p=2p(1+2^{\ell-1} + c_k p)$ or, equivalently, $1+2^{\ell-1} + c_k p = \pm 1$. Since $p$ is odd, $2^{\ell-1} + c_k p \ne 0$ for any $c_k$. So, $1+2^{\ell-1} + c_k p \ne 1$. If $1+2^{\ell-1} + c_k p = - 1$, then $-2c_k p = 4 + 2^ \ell = 3 + p^s$. This is impossible for $p>3$. As a result, we obtain a contradiction in dimensions of the form $k=2^\ell=p^s-1$ for $\ell>1$.
\end{proof}

We can also prove the following result stated in~\cite{buch72}.

\begin{theorem}\label{1p}
Let $A$ be the subring of $W_*$ generated by the coefficients of the formal group law~$F_W$.  Then there is an orientation of $W$ such that $A[\frac{1}{2}]=W_*[\frac{1}{2}]$.
\end{theorem}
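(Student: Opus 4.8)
The plan is to exhibit, for a well-chosen orientation, a polynomial generator of $W_*[\tfrac12]$ inside $A[\tfrac12]$ in every degree where one is needed; a standard induction on degree (solving for $x_k$ in terms of a generator of $A[\tfrac12]$ and lower decomposables) then yields $A[\tfrac12]=W_*[\tfrac12]$. By Theorem~\ref{Wring} we have $W_*[\tfrac12]=\Z[\tfrac12][x_1,x_k:k\ge3]$ with $s_k(x_k)=\pm m_km_{k-1}$, so an element $y\in W_{2k}$ is a polynomial generator of $W_*[\tfrac12]$ exactly when $s_k(y)=\pm2^a\,m_km_{k-1}$ for some $a\ge0$, i.e. when the odd part of $s_k(y)$ equals that of $m_km_{k-1}$. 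No generator is needed in degree $4$ (there is none), so the degrees to treat are $k=1$ and $k\ge3$.

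Degree $2$ is handled by the freedom in the linear part of the orientation: reading off the $uv$-coefficient in Lemma~\ref{fgr} gives $\omega_{11}=-(2\ell+1)x_1$, where the integer $\ell$ (with $2\ell=\partial\lambda$) is a free parameter of the orientation. Choosing $\lambda=0$, hence $\ell=0$, yields $\omega_{11}=-x_1\in A$, a generator; this choice does not interfere with higher degrees, since $\ell$ enters only the $uv$ and $uv^2$ terms of Lemma~\ref{fgr}, i.e. degrees $k=1,2$. For $k\ge3$ the coefficients $\omega_{ij}$ with $i+j=k+1$ all lie in $W_{2k}$, and a B\'ezout combination $y_k=\sum n_{ij}\omega_{ij}\in A$ realizes $s_k(y_k)=\gcd\{s_k(\omega_{ij}):i+j=k+1\}$. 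By Lemma~\ref{coeff} this gcd is $m_k\bigl(N+c_km_km_{k-1}\bigr)$ with $N=1+(-1)^k(k+1)$, where $c_k$ may be prescribed to be any integer by the choice of orientation (the parameters $\omega_i$ are free and independent, by Lemma~\ref{gamma} and Proposition~\ref{worie}). The whole problem thus reduces to the elementary claim that for each $k\ge3$ one can pick $c_k\in\Z$ with
\[
  N+c_km_km_{k-1}=\pm2^a\,m_{k-1}\quad\text{for some }a\ge0\text{ and sign},
\]
for then $s_k(y_k)=\pm2^a m_km_{k-1}$ and $y_k$ is a generator.

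I would prove this claim by the Chinese Remainder Theorem, using that $m_k$ and $m_{k-1}$ are coprime (each is $1$ or a prime, and they are distinct since $\gcd(k,k+1)=1$). It suffices to solve $N\equiv\pm2^a m_{k-1}\pmod{m_km_{k-1}}$. Modulo $m_{k-1}$ both sides must vanish, and $N\equiv0\pmod{m_{k-1}}$ holds automatically: if $m_{k-1}=q$ is odd then $k=q^t$ is odd and $N=-k\equiv0$, while if $m_{k-1}=2$ then $k=2^t$ is even and $N=k+2\equiv0$. Modulo $m_k$ the congruence is vacuous unless $m_k=p$, i.e. $k+1=p^s$, in which case $N\equiv1\pmod p$; since two consecutive odd prime powers cannot exist, $p$ odd forces $m_{k-1}\in\{1,2\}$, and the residue to be matched is $m_{k-1}^{-1}\in\{1,2^{-1}\}\pmod p$, both powers of $2$ modulo $p$, while $p=2$ is trivial by a parity check. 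This settles the claim, and the orientation realizing the chosen $(\ell,c_k)$ completes the argument.

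The main obstacle, and the whole point of the statement, is concentrated in the degrees $k=2^\ell=p^s-1$, where $m_{k-1}=2$, $m_k=p$ is an odd prime, and $m_km_{k-1}=2p$ --- precisely the degrees in which Theorem~\ref{notgen} produced an obstruction over~$\Z$. Over $\Z$ one would need $N+2pc_k=\pm2$, shown there to be impossible, whereas over $\Z[\tfrac12]$ the admissible target is the weaker $N+2pc_k=\pm2^{a+1}$, and the residue one must hit modulo $p$ is $2^{-1}$, which is a power of $2$ exactly because $2$ is invertible mod~$p$. Thus inverting $2$ dissolves the single arithmetic obstruction to generation; the remaining content is the bookkeeping of the previous paragraphs.
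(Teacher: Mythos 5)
Your argument, as written, proves the theorem only for the multiplication defined by the Stong projection, and that restriction is exactly what the paper's proof is engineered to avoid. Your opening step, ``By Theorem~\ref{Wring} we have $W_*[\frac{1}{2}]=\Z[\frac{1}{2}][x_1,x_k:k\ge3]$'', invokes a description of $W_*$ valid only for Stong's multiplication, whereas Theorem~\ref{1p} is stated after the paper fixes an \emph{arbitrary} $\SU$-bilinear multiplication (Theorem~\ref{multgen}, with parameter $\omega\in W_4$), on which $F_W$, and hence $A$, depend; the remark immediately following the theorem says precisely this: the proof ``would be simpler if we knew that $W_*$ is a polynomial ring for arbitrary $\SU$-linear multiplication'' --- i.e.\ your route is the explicitly flagged unavailable shortcut. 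For a general multiplication two of your steps lose their justification: the criterion that $y\in W_{2k}$ is a polynomial generator exactly when the odd part of $s_k(y)$ equals that of $m_km_{k-1}$, and, more seriously, the claim that ``no generator is needed in degree~$4$''. The paper replaces Theorem~\ref{Wring} by Lemma~\ref{suff}: $W_*[\frac{1}{2}]$ is a free module with basis $\{1,[\C P^1]\}$ over the canonical polynomial subring $\MSU_*[\frac{1}{2}]=\Z[\frac{1}{2}][y_2,y_3,\ldots]$, and generators $\widetilde y_k=\frac{1}{2}\partial(x_1x_k)$ of $\MSU_*[\frac{1}{2}]$ are manufactured inside $A[\frac{1}{2}]$ from elements $x_k\in A$ with $s_k(x_k)=m_km_{k-1}$ up to powers of $2$ \emph{for all} $k\ge2$. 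Since $\MSU_*[\frac{1}{2}]$ does have a generator in degree $4$, the degree-$4$ step cannot be skipped, and it is a nontrivial piece of the paper's proof: Lemma~\ref{fgr} gives $\omega_{12}=4\alpha_{12}+3\omega_2-2\delta \bmod J^2$, and a case analysis modulo $3$ on $s_2(\delta)=-6+24\alpha$ and $s_2(\omega_2)=24\beta$ is required, in one case taking $x_2=\omega_{12}+x_1*x_1$ rather than $\omega_{12}$ itself.

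That said, what you did carry out is correct, and for $k\ge3$ your arithmetic is actually cleaner than the paper's. Your single solvability claim --- that $N+c_km_km_{k-1}=\pm2^am_{k-1}$ with $N=1+(-1)^k(k+1)$ can always be arranged, using $m_{k-1}\mid N$ and the fact that $2^{-1}$ is itself a power of $2$ modulo an odd prime --- subsumes the paper's Lemma~\ref{cases} together with the Catalan-type Lemma~\ref{fermat} classifying $p^s=2^\ell+1$ (Fermat primes and $3^2=2^3+1$): once $2$ is inverted one never needs to know \emph{which} exceptional degrees $k=2^\ell=p^s-1$ occur, only that $2$ has finite multiplicative order mod~$p$. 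Your degree-$2$ step ($\lambda=0$, giving $\omega_{11}=-[\C P^1]\in A$) agrees with the paper's. So the proposal is salvageable: either restrict the statement to the Stong multiplication, or graft your $k\ge3$ arithmetic onto the $\MSU_*[\frac{1}{2}]$-module argument of Lemma~\ref{suff} and supply the missing element $x_2\in A$ with $s_2(x_2)=3$ up to a power of~$2$.
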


\begin{remark}
The proof given below would be simpler if we knew that 
$W_*$ is a polynomial ring \emph{for arbitrary $\SU$-linear multiplication} on~$W$. However, the description of Theorem~\ref{Wring} is valid only for the multiplication defined by the Stong projection.
\end{remark}

The proof is based on three lemmata. The first lemma says that 
the case $k=2^\ell=p^s-1$ considered in the proof of Theorem~\ref{notgen} is the only case when the gcd of the $s$-numbers of the coefficients of the formal group law $F_W$ does not agree 
with~$m_km_{k-1}$:

\begin{lemma}\label{cases}
If $k$ is not of the form $k=2^\ell= p^s-1$ for some odd prime $p$, then $\gcd \bigl\{ s_{i+j-1}(\omega_{ij}) \colon i+j = k+1\bigr\} = m_k m_{k-1}$ for some value of $c_k$ (see Lemma~\ref{coeff}).
\end{lemma}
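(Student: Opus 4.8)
The plan is to reduce the statement to an elementary divisibility problem over $\Z$ and then settle it by a short case analysis on the parity and prime-power structure of $k$ and $k+1$. By Lemma~\ref{coeff}, the gcd in question equals $m_k\bigl(1+(-1)^k(k+1)+c_km_km_{k-1}\bigr)$, where $c_k$ ranges over all integers as the orientation $w$ varies. Dividing by $m_k$, achieving $\gcd=m_km_{k-1}$ amounts to finding an integer $c_k$ with
\[
  1+(-1)^k(k+1)+c_km_km_{k-1}=\pm m_{k-1}.
\]
Writing $A=1+(-1)^k(k+1)$, so that $A=k+2$ for even $k$ and $A=-k$ for odd $k$, this rearranges to $A=m_{k-1}(\pm 1-c_km_k)$. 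Such a $c_k$ exists precisely when two conditions hold: (i) $m_{k-1}\mid A$, and (ii) the integer quotient $B:=A/m_{k-1}$ satisfies $B\equiv\pm1\pmod{m_k}$.

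First I would dispatch (i) in full generality. Recall that $m_{k-1}\in\{1,q\}$, with $m_{k-1}=q$ exactly when $k=q^t$ is a prime power; the case $m_{k-1}=1$ is trivial, and if $k=q^t$ then for even $k$ one has $q=2$ and $A=k+2$ is even, while for odd $k$ one has $A=-k=-q^t$, so $q\mid A$. Thus $B$ is always an integer. For (ii), the case $m_k=1$ is again trivial, so I would assume $m_k=p$ with $k+1=p^s$, whence $k\equiv-1\pmod p$. For odd $k$ this forces $p=2$, and $B$ is visibly odd (it equals $-k$ or $-q^{t-1}$ with $q$ odd), so $B\equiv1\pmod 2$. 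For even $k$ the prime $p$ is odd, and here the hypothesis that $k$ is \emph{not} of the form $2^\ell=p^s-1$ forces $k$ not to be a power of $2$, hence (being even) not a prime power at all; therefore $m_{k-1}=1$ and $B=A=k+2\equiv1\pmod p$. In every non-excluded case both (i) and (ii) hold, so the desired $c_k$ exists.

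The main obstacle is isolating exactly the excluded form, and the key observation that makes the argument clean is structural rather than computational: for even $k$, the only way to have $m_{k-1}=2$ is $k=2^\ell$, so excluding $k=2^\ell=p^s-1$ automatically reduces every remaining even case to $m_{k-1}=1$, collapsing (ii) to the single congruence $k+2\equiv1\pmod p$, which holds since $k\equiv-1\pmod p$. The genuine failure of (ii) occurs only when $m_{k-1}=2$ and $m_k=p$ is odd—that is, $k=2^\ell$ and $k+1=p^s$—where $B=2^{\ell-1}+1\equiv\tfrac12\pmod p$ need not be $\pm1$; this is precisely the case treated separately in Theorem~\ref{notgen}. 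The odd case presents no obstruction at all, because there $m_k\in\{1,2\}$ and the parity of $B$ settles everything.
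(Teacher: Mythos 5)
Your proof is correct and takes essentially the same route as the paper: both reduce, via Lemma~\ref{coeff}, to solving $1+(-1)^k(k+1)+c_k m_k m_{k-1}=\pm m_{k-1}$ in integers $c_k$ and then run an elementary case analysis on the prime-power structure of $k$ and $k+1$ (i.e., on the values of $m_{k-1}$ and $m_k$). The only cosmetic difference is organizational: the paper splits into the cases $m_{k-1}\in\{1,2,p\}$ and exhibits $c_k$ explicitly in each, whereas you recast solvability as the divisibility $m_{k-1}\mid A$ plus the congruence $A/m_{k-1}\equiv\pm1\pmod{m_k}$, with the same arithmetic underneath.
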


\begin{proof}
By Lemma~\ref{coeff}, we need to find $c_k$ such that $1+ (-1)^k(k+1)+c_k m_k m_{k-1} = m_{k-1}$.

If $m_{k-1}=1$, then we set $c_k=(-1)^{k+1}\frac{k+1}{m_k}$, which is an integer as $m_k$ always divides $k+1$.

If $m_{k-1}=2$, then $k=2^\ell$. By assumption, $k\ne p^s-1$, so $m_k=1$. The required identity becomes $1+(2^\ell+1)+2c_k=2$, which is satisfied for $c_k=-2^{\ell-1}$.

If $m_{k-1}=p$ is an odd prime, then $k=p^s$. Hence, $m_k=1$ or~$2$. The required identity becomes $1-(p^s+1)+pc_km_k=p$, which is satisfied for 
$c_k=\frac{p^{s-1}+1}{m_k}$. The latter is an integer as $p^{s-1}+1$ is even. 
\end{proof}

\begin{lemma}\label{fermat}
If $p^s = 2^\ell+1$ for odd prime $p$ and positive integers $\ell, s$, then either $s=1$ and $\ell = 2^n$ (so $p$ is a Fermat prime), or $p=3$, $s=2$ and $\ell=3$.
\end{lemma}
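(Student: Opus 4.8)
The plan is to rewrite the equation as $p^s-1=2^\ell$ and split into the two cases $s=1$ and $s\ge 2$, exploiting elementary factorizations of $p^s-1$ together with parity. The case $s=1$ will reduce to the classical characterization of Fermat primes, while the case $s\ge 2$ will be squeezed down to the single exceptional solution $p=3$, $s=2$, $\ell=3$. No deep number theory (such as Catalan's conjecture) is required; everything is forced by divisibility arguments.

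First I would handle $s=1$, where $p=2^\ell+1$. The key observation is that if $\ell$ had an odd divisor $d>1$, then writing $\ell=dm$ and applying the factorization $a^d+1=(a+1)(a^{d-1}-a^{d-2}+\cdots+1)$ with $a=2^m$ shows that $2^m+1$ divides $2^\ell+1=p$. Since $1<2^m+1<p$, this contradicts the primality of $p$. Hence $\ell$ has no odd divisor exceeding $1$, so $\ell=2^n$ and $p$ is a Fermat prime, which is the first alternative.

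Next I would treat $s\ge 2$ by factoring $p^s-1=(p-1)\bigl(1+p+\cdots+p^{s-1}\bigr)=2^\ell$. The second factor is a sum of $s$ odd terms, hence congruent to $s$ modulo $2$, and it divides the power of two $2^\ell$. If $s$ were odd, this factor would be odd and therefore equal to $1$; but it is at least $1+p\ge 4$, a contradiction. So $s$ is even. Writing $s=2m$ and factoring $p^s-1=(p^m-1)(p^m+1)$, I note that both factors are positive with product $2^\ell$, hence each is itself a power of two, and they differ by $2$. The only pair of powers of two differing by exactly $2$ is $2$ and $4$, which forces $p^m=3$; as $p$ is prime this gives $p=3$, $m=1$, so $s=2$ and $2^\ell=3^2-1=8$, i.e.\ $\ell=3$, the second alternative.

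The main thing to be careful about — and the only genuinely delicate point — is justifying that $p^m-1$ and $p^m+1$ are \emph{both} powers of two: this follows at once because their product is $2^\ell$ and each factor is a positive integer, but it is exactly where the argument pins down the exceptional solution. Once this and the parity dichotomy for $s$ are in place, the "two powers of two differing by $2$" step closes the case uniquely, completing the proof.
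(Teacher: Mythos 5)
Your proof is correct and complete, but it takes a genuinely different --- and considerably shorter --- route than the paper. The paper splits on the prime: for $p=3$ it rules out $s>2$ by a congruence analysis modulo $9$ (showing $2^\ell\equiv -1 \pmod 9$ forces $\ell=6m+3$) combined with the sum-of-cubes factorization $2^{6m+3}+1=(2^{2m+1}+1)(2^{4m+2}-2^{2m+1}+1)$, each factor of which must be a power of $3$, yielding a contradiction mod $9$; for $p>3$ it reduces modulo $3$ to get $s$ odd and $\ell$ even, forces $s=1$ by a $2$-adic argument (writing $p-1=a2^q$ with $a$ odd and expanding $(a2^q+1)^s$ binomially), and only then runs the Fermat-prime factorization that you also use. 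You instead split on the exponent $s$: for $s\ge 2$, the geometric-sum factor $1+p+\cdots+p^{s-1}$ is congruent to $s$ modulo $2$ and must be a power of two, which kills odd $s$; then the difference-of-squares factorization $(p^m-1)(p^m+1)=2^\ell$ exhibits two powers of two differing by exactly $2$, which can only be the pair $(2,4)$, pinning down $p^m=3$ and hence $p=3$, $s=2$, $\ell=3$ at once. All your individual steps are sound: each factor of $2^\ell$ is indeed a power of two since it divides $2^\ell$, and $2^b(2^{a-b}-1)=2$ forces $(2^a,2^b)=(4,2)$. Your route buys two things: it collapses the paper's entire mod-$9$ case analysis for $p=3$ into two lines, and your $s\ge 2$ argument never uses primality of $p$ --- only that $p$ is odd and greater than $1$ --- so it proves a slightly more general statement, with primality needed only in the classical $s=1$ Fermat-prime step.
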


\begin{proof} \emph{Case 1:} $p=3$.

There are obvious solutions $s=1, \ell=1$ and $s=2, \ell=3$.

Now suppose $s>2$, so that $\ell>3$. Then $3^s=2^\ell+1\equiv 0 \pmod 9$. 
It is easy to check that $2^\ell \equiv -1 \pmod 9$ if and only if $\ell = 6m+3$.
Then $3^s=2^\ell+1=(2^{2m+1})^3+1=(2^{2m+1}+1)(2^{4m+2}-2^{2m+1}+1)$. Hence, $2^{4m+2}-2^{2m+1}+1=3^{s'}$ and $2^{2m+1}+1 =3^{s''}$. Now $\ell>3$ implies $m >0$ and therefore $s'>1$. Hence, $2^{4m+2}-2^{2m+1}+1 \equiv 0 \pmod 9$. Similarly, $s''>1$, hence, $2^{2m+1}+1 =3^{s''} \equiv 0 \pmod 9$. The latter implies $2^{4m+2}-2^{2m+1}+1 \equiv 1 +1 +1 =3 \not \equiv 0 \pmod 9$. A contradiction.

\smallskip

\emph{Case 2:} $p>3$.


Reducing the identity $p^s=2^\ell+1$ modulo 3 we obtain $(\pm 1)^s \equiv (-1)^\ell+1 \pmod 3$. This implies that $s$ is odd and $\ell$ is even.
 
Write $p-1=a2^q$ with odd~$a$. Then 
$2^\ell +1=(a2^q+1)^s=a^s2^{qs}+\cdots+sa2^q+1$. Suppose $s>1$. Then 
$\ell>q$ and $as+2^q(a^s2^{q(s-2)}+\cdots)=2^{\ell-q}$ is even. This is a contradiction because $as$ is odd. Hence, $s=1$ and $p=2^\ell+1$.

Write $\ell = r2^n$ with odd $r$. Then $p=2^\ell+1= (2^{2^n}+1)(2^{(r-1)2^n}-\cdots+1)$. Since $p$ is prime, we obtain $p=2^{2^n}+1$ and $\ell=2^n$.
\end{proof}

\begin{lemma}\label{suff}
If $[\C P^1]\in A$ and there are elements $x_k\in A_{2k}$, $k \ge 2$, such that $s_k(x_k)=m_km_{k-1}$ up to a power of~$2$, then $A[\frac{1}{2}]=W_*[\frac{1}{2}]$.
\end{lemma}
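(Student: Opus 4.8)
The plan is to prove the inclusion $W_*[\frac12]\subseteq A[\frac12]$ (the reverse being trivial, since $A\subseteq W_*$) by induction on the degree, establishing $W_{2n}[\frac12]\subseteq A[\frac12]$ for every $n$. Throughout I use the polynomial description of Theorem~\ref{Wring} \emph{for the Stong multiplication}, which I temporarily denote $a*_0 b=ab+2[V]\,\partial a\,\partial b$ (Proposition~\ref{multpr}); thus $W_*\cong\Z[x_1,x_k^0:k\ge3]$ with $s_k(x_k^0)=\pm m_km_{k-1}$, whereas $A$ is only known to be closed under the chosen multiplication $a*b=ab+\delta\,\partial a\,\partial b$, $\delta=2[V]-\omega$. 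The whole difficulty is that $A$ is a $*$-subring while the generators I must reach are described through $*_0$, so the two products have to be reconciled after inverting $2$. The base cases $n=0,1$ are immediate: $1\in A$ and $[\C P^1]\in A$ by hypothesis, and $W_0$, $W_2$ are the corresponding rank-one groups.

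Before the induction I would record three facts. First, $\partial^2=0$, which is the relation $\partial_1\partial_1=0$ read off from Theorem~\ref{comp} with $k=m=1$, where $\bigl([0](u)\bigr)u=0$. Second, $\partial$ maps $\MU_*$ into $W_*=\ker\varDelta$, since $\partial[M]$ always has vanishing first Chern class (as noted in the Introduction). Third, the two multiplications are related by
\[
  u*_0 v=u*v+\omega\,\partial u\,\partial v,\qquad \omega\in W_4,
\]
because $2[V]-\delta=\omega$. Combining these, the defining formula gives $a*b=ab$ for $a,b\in W_*$ with $\partial a=\partial b=0$; since $\partial(\partial u)=\partial(\partial v)=0$ we get $\partial u*\partial v=\partial u\,\partial v$, and since $\partial(\partial u\,\partial v)=0$ by Lemma~\ref{Deltaab} we get $\omega*(\partial u\,\partial v)=\omega\,\partial u\,\partial v$. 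Hence the correction term is a genuine $*$-product,
\[
  \omega\,\partial u\,\partial v=\omega*(\partial u*\partial v),
\]
which is the key to keeping $*_0$-decomposables inside the $*$-subring $A$.

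With these facts the inductive step splits in two. \textbf{(Decomposables.)} Assuming $W_{2m}[\frac12]\subseteq A[\frac12]$ for all $m<n$, take $u,v\in W_*$ of positive degrees summing to $2n$. Then $u,v,\partial u,\partial v\in A[\frac12]$ by induction (the degrees of $\partial u,\partial v$ drop by $2$), and $\omega\in W_4\subseteq A[\frac12]$ once the case $n=2$ is settled; using closure of $A[\frac12]$ under $*$ and the identity above, $u*_0 v=u*v+\omega*(\partial u*\partial v)\in A[\frac12]$, so every $*_0$-decomposable element of $W_{2n}[\frac12]$ lies in $A[\frac12]$. \textbf{(Indecomposables.)} For $n\ge3$ a short check shows that $s_n$ annihilates $*_0$-decomposables (each correction $2[V]\,\partial u\,\partial v$ is an $\MU_*$-product of at least two positive-degree classes, and $s_n(uv)=0$), so $s_n$ descends to the rank-one group $Q^0_{2n}\cong\Z\langle x_n^0\rangle$ and detects it via $s_n(x_n^0)=\pm m_nm_{n-1}$. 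As $x_n\in A_{2n}$ satisfies $s_n(x_n)=\pm2^a m_nm_{n-1}$, its class in $Q^0_{2n}[\frac12]$ is a unit multiple of that of $x_n^0$; together with the decomposable part this yields $x_n^0\in A[\frac12]$, hence $W_{2n}[\frac12]\subseteq A[\frac12]$.

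The case $n=2$ must be handled first and separately, because $Q^0_{4}=0$ (there is no Stong generator in degree $4$) yet $W_4\cong\Z$ is generated by the $*_0$-square of $[\C P^1]$, on which $s_2$ is nonzero: $s_2\bigl([\C P^1]*_0[\C P^1]\bigr)=8\,s_2([V])=-24$. Since $s_2$ is injective on $W_4$ and $s_2(x_2)=\pm2^a\cdot6$ agrees with $-24$ up to a unit of $\Z[\frac12]$, the element $x_2$ generates $W_4[\frac12]$, giving $\omega\in W_4[\frac12]\subseteq A[\frac12]$ and bootstrapping the decomposable step above. I expect the main obstacle to be precisely this Decomposables step: reconciling the $*$-algebra structure of $A$ with the $*_0$-structure of $W_*$ after inverting $2$, which is exactly what forces the use of $\partial^2=0$ to convert the correction term $\omega\,\partial u\,\partial v$ back into an element of $A$.
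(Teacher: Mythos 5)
Your argument is correct, and it takes a genuinely different route from the paper's proof. The paper does not induct on $W_*[\frac12]$ via Theorem~\ref{Wring} at all: it uses \eqref{partial} to write $x=\frac12\bigl(\partial([\C P^1]x)+[\C P^1]\,\partial x\bigr)$, concluding that $W_*[\frac12]$ is a free $\MSU_*[\frac12]$-module with basis $\{1,[\C P^1]\}$, so that the lemma reduces to showing $\MSU_*[\frac12]\subset A[\frac12]$; it then quotes Novikov's theorem $\MSU_*[\frac12]\cong\Z[\frac12][y_2,y_3,\ldots]$ with generators detected by $s_k(y_k)=\pm m_km_{k-1}$ up to powers of $2$, and inductively manufactures generators inside $A[\frac12]$ via $\widetilde y_k=\frac12\,\partial(x_1x_k)=x_k-\frac12\,x_1\partial x_k$. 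The point of that reduction is that on $\Ker\partial=\MSU_*[\frac12]$ every $\SU$-bilinear product \eqref{mult} coincides with the $\MU_*$-product, so the paper never has to compare the chosen multiplication $*$ with the Stong multiplication $*_0$ --- exactly the difficulty you identified as central. Your substitute for this reduction is the identity $u*_0v=u*v+\omega*(\partial u*\partial v)$, correctly justified from $\partial\partial=0$ (the relation $\partial_k\partial=0$ of Theorem~\ref{comp}) and $\partial(\partial u\,\partial v)=0$ (Lemma~\ref{Deltaab}); it keeps $*_0$-decomposables inside the $*$-subring $A[\frac12]$, and your remaining steps check out: for $n\ge3$ the correction $2[V]\,\partial u\,\partial v$ is indeed decomposable in $\MU_*$, so $s_n$ detects the rank-one quotient spanned by $x_n^0$, and the separate case $n=2$ is right and necessary ($W_4=\Z\langle x_1*_0x_1\rangle$ with $s_2=-24$; your computation even shows the hypothesis forces $s_2(x_2)$ divisible by $24$), whereas in the paper degree $4$ is uniform with the other degrees because $\MSU_*[\frac12]$ has a polynomial generator there. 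In trade, the paper's proof is shorter but leans on the external structure theorem for $\MSU_*[\frac12]$, while yours stays entirely inside $W$, using only Theorem~\ref{Wring} and formula \eqref{mult}, at the cost of the two-product bookkeeping; both arguments ultimately rest on $\partial\partial=0$ and the vanishing of $s$-numbers on decomposables.
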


\begin{proof}
There is a polynomial subring $\MSU_*[\frac{1}{2}] \subset W_*[\frac{1}{2}]$. This follows from the fact that any $\SU$-linear multiplication~\eqref{mult} induces the standard multiplication on~$\MSU_*$, because $\MSU_* \subset\Ker\partial$. Furthermore, $\MSU_*[\frac{1}{2}]$ coincides with $\Ker \partial = \Im \partial$ on $W_*[\frac{1}{2}]$ (see~\cite[Chapter~X]{ston68} or~\cite[Theorem~5.11]{c-l-p19}).  From~\eqref{partial} we obtain the identity $x=\frac{1}{2}(\partial([\C P^1] x) + [\C P^1]\partial x)$ for any $x\in W_*[\frac{1}{2}]$, which implies that $W_*[\frac{1}{2}]$ is a free $\MSU_*[\frac{1}{2}]$-module with basis $\{1, [\C P^1]\}$. Since $[\C P^1] \in A$ by assumption, we need to show that $\MSU_*[\frac{1}{2}] \subset A[\frac{1}{2}]$.

By the theorem of Novikov~\cite{novi62},  $\MSU_*[\frac{1}{2}]\cong\Z[\frac{1}{2}][y_2, y_3, \ldots]$, $\dim y_k=2k$. The polynomial generators $y_k$ are specified by the condition $s_k(y_k)=\pm m_k m_{k-1}$ up to a power of $2$ (see~\cite[Chapter~X]{ston68}).

Let $x_1=[\C P^1]\in A$. We can assume by induction that $\MSU_*[\frac{1}{2}]\subset A[\frac{1}{2}]$ in dimensions less than $2k$. For $x_k \in A$, let $\widetilde y_k = \frac{1}{2} \partial (x_1 x_k)=x_k - \frac{1}{2}x_1\partial x_k$. We have $\widetilde y_k \in \MSU_*[\frac{1}{2}]$. By the induction hypothesis, $\partial x_k \in A[\frac{1}{2}]$, so $\widetilde y_k \in A[\frac{1}{2}]$. Since $x_1 \partial x_k$ is decomposable, $s_k(\widetilde y_k) = s_k(x_k)=m_km_{k-1}$ up to a power of 2. It follows that $\widetilde y_k$ is a polynomial generator of $\MSU_*[\frac{1}{2}]$. Therefore, we obtain $\MSU_*[\frac{1}{2}] \subset A[\frac{1}{2}]$ by induction.
\end{proof}

\begin{proof}[Proof of Theorem \ref{1p}.]
By Lemma \ref{suff}, we need to specify an orientation of $W$ and elements $x_k\in A_{2k}$, $k \ge 2$, such that $s_k(x_k)=m_km_{k-1}$ modulo a power of~$2$.

The formula from Lemma~\ref{fgr} implies $\omega_{11}=\alpha_{11}-2\lambda= -[\C P^1] - 2\lambda$. Choosing an orientation of $W$ with $\lambda=0$ we get 
$x_1=[\C P^1]\in A$.

\smallskip

Next we need to find $x_2\in A$ with $s_2(x_2) = m_2m_1=3$ up to a power of~$2$.

The multiplication on $W_*$ is given by $a*b = ab + \delta \partial a \partial b$, $\delta = 2[V] + \omega$, $\omega \in W_4$, $[V]=[\C P^1]^2-[\C P^2]$. We have $W_4=\mathbb Z\langle9[\C P^1]^2-8[\C P^2]\rangle$, so $s_2(\omega)=24\alpha$ with $\alpha\in\Z$. Hence, $s_2(\delta) = -6+24\alpha$.

The formula from Lemma \ref{fgr} implies 
\[
  \omega_{12}=-2\delta(2\ell+1)^2+\alpha_{12}+3\gamma_3 \mod J^2.
\]  
Substituting here $2\ell=\partial\lambda=0$ and $\gamma_3=\omega_2+\alpha_{12}$ (see Lemma~\ref{gamma}), we obtain
\[   
  \omega_{12}= 4\alpha_{12}+3\omega_2-2\delta \mod J^2,
\]  
where $\omega_2 \in W_4$ can be chosen arbitrarily depending on the orientation of~$W$. Since $s_2(\alpha_{12})=3$ and $s_2(\omega_2) = 24 \beta$ with $\beta\in\Z$, we obtain $s_2(\omega_{12}) = 24-48\alpha+72 \beta$.

\emph{Case 1:} $\alpha = 3n$, $n\in\Z$.
Let $\beta = 2n$. Take $x_2=\omega_{12}\in A$. Then $s_2(x_2)=24=3\cdot 2^3$.

\emph{Case 2:} $\alpha = 3n + \varepsilon$, $n\in\Z$, $\varepsilon=1$ or~$2$. Let $\beta = -2n$. Take  $x_2=\omega_{12}+x_1*x_1\in A$. We have $x_1*x_1=(x_1)^2+4\delta$, so $s_2(x_2)=s_2(\omega_{12})+4s_2(\delta)=24(3\beta+2\alpha)
=3\cdot\varepsilon2^4$, which is equal to $3$ up to a power of~$2$.

\smallskip

It remains to choose $x_k$ for $k\ge3$. By Lemma~\ref{cases}, there is an integral linear combination $x_k$ of the coefficients $\omega_{ij}$, $i+j=k+1$, such that  $s_k(x_k)=m_k m_{k-1}$, except for the case $k+1=p^s=2^\ell+1$.

\smallskip

For the remaining case $k=2^\ell= p^s-1$, Lemma \ref{fermat} implies that either $p=3$, $s=2$ or $p=2^{2^n}+1$, $s=1$.

In the first case ($k=8$), Lemma~\ref{coeff} gives $\gcd \bigl\{  s_k(\omega_{ij}) \colon i+j = k+1\bigr\} =  3(1+9+6c_k)$. Setting $c_k=-2$, we obtain $\gcd \bigl\{  s_k(\omega_{ij}) \colon i+j = k+1\bigr\} = -6 = -m_k m_{k-1} $, as needed.

In the second case, setting $c_k = \frac{p-1}{2}-1$ we obtain $\gcd\bigl\{  s_k(\omega_{ij}) \colon i+j = k+1\bigr\}=p (p^2-2p+1)=p(p-1)^2=2^{2^{n+1}} p$,
as needed.
\end{proof}

\end{document}